\newtheorem{thm}{Theorem}[section]
\newtheorem{cor}[thm]{Corollary}
\newtheorem{lem}[thm]{Lemma}
\newtheorem*{thm*}{Theorem}
\newtheorem*{con*}{Conjecture}
\newtheorem*{lem*}{Lemma}
\newtheorem{prop}[thm]{Proposition}
\theoremstyle{remark}\newtheorem*{remark}{Remark} } %   I added this, this wasn't in the original template of JNT. 
\newcommand{\bigslant}[2]{{\raisebox{.2em}{$#1$}\left/\raisebox{-.2em}{$#2$}\right.}}
\begin{document}
\baselineskip=17pt
\hbox{}
\medskip

\title{Waring's Problem in Finite Rings}
\author{Ye\c{s}\.im Dem\.iro\u{g}lu Karabulut}

\email{yesim.demiroglu@rochester.edu}

\address{Department of Mathematics, University of Rochester, Rochester, NY}

\thanks{This work was partially supported by NSA grant H98230-15-1-0319.}

\maketitle

\begin{abstract}
In this paper we obtain sharp results for Waring's problem over general finite rings, by using a combination of Artin-Wedderburn theory and Hensel's lemma and building on new proofs of analogous results over finite fields that are achieved using spectral graph theory. We also prove an analogue of S{\'a}rk{\"o}zy's theorem for finite fields.

\

\noindent \textbf{Keywords:} Waring's Problem, Spectral Graph Theory, Artin-Wedderburn Theory, Finite Rings.

\

\noindent \textbf{AMS 2010 Subject Classification:}
Primary: 11P05;
Secondary: 05C25, 16K99.
\end{abstract}

\section{Introduction} 

Waring's problem has a very long history with an extensive literature spanning many different areas of mathematics. It seems likely to continue to be a central problem in mathematical research for some time to come. The original version of the problem was due to Edward Waring, and first appeared in \emph{Meditationes Algebraicae}, published in $1770$, where he states "Every number is the sum of $4$ squares; every number is the sum of $9$ cubes; every number is the sum of $19$ biquadrates [$4^{\text{\tiny th}}$ powers]; and so on.". Although we can not be sure exactly what Waring had in mind, we assume that he meant that for every $k \geqslant 2$, there exists a smallest positive integer $g(k)$, with the property that every $x \in \mathbb{N}$ can be written as a sum of at most $g(k)$ many $k^{\text{\tiny th}}$ powers of natural numbers, satisfying $g(2) = 4$, $g(3) = 9$, $g(4) = 19$, etc. The fact that $g(2)=4$ is equivalent to the four-squares theorem that was proven by Lagrange the same year Waring published his general conjecture, but was apparently already known to Diophantus who alluded to it nearly fifteen hundred years earlier in examples apppearing in his treatise \emph{Arithmetica} (see \cite{CS4}).

Hilbert proved the existence of $g(k)$ for arbitrary $k$ by evaluating a $25$-fold integral in $1909$, and as a result of the efforts of many mathematicians over a period of many years, explicit formulas for $g(k)$ have now been found for all but finitely many values of $k$ (see \cite{Wooley}), with multiple areas of mathematics such as the Gaussian integers and additive number theory developed in the process of obtaining these results. It then became natural to ask whether the bound $g(k)$ could be improved if instead of considering all $x \in \mathbb{N}$, one limited oneself to considering only those $x$ that are sufficiently large. We let $G(k)$ denote the smallest such improved bound, and the question of finding explicit values of $G(k)$ for arbitrary $k$ is still wide open, with the problem having been solved only for $k=2$ and $k=4$. One of the approaches for attacking this question involves finding upper bounds for $G(k)$ using variants of the Hardy-Littlewood circle method, which is a powerful analytic tool also motivated by Waring's problem (for an overview of the complete history of the problem, see \cite{Wooley}).

After considering this question in the context of $\mathbb{N}$, it seems natural to generalize it to working over $\mathbb{Z}_n$, traditionally referred to as \emph{Waring's problem} $\bmod \ n$. One can fix a $k$ and look for a result which gives the smallest $m=g(k, n)$ such that every number in $\mathbb{Z}_n$ can be written as a sum of at most $m$ many $k^{\text{\tiny th}}$ powers. This problem was first completely solved for $k = 2, 3$ by Small in \cite{CS2}, after which he observed that the same techniques yield a solution for all $k$, \cite{CS1}. \footnote{There are a few minor errors in the stated values of $g(k, n)$ that appear in \cite{CS1}, \cite{CS3}, \cite{CS2}. The correct values should be $g(4,41) = 2$, $g(5, 71) = 2$ and $g(5,101) = 2$.}

The main contribution of the current paper is to extend results for Waring's problem to the context of general finite rings. As far as we are aware, there do not exist other such results in the literature, so that our results appear to be the first known ones of this type. We get these generalizations by appealing to Artin-Wedderburn theory and Hensel's lemma, allowing us to build on results over finite fields obtained at an earlier point (see Section~\ref{fields section}) in our paper.

In particular, the following represents a sampling of the type of results found in Section~\ref{WP in Finite Rings}, (see Theorem~\ref{importantTable} for a more complete statement).

Let $R$ be a (not necessarily commutative) finite ring. Then any element of $R$ can be written as a sum of $n$ many $k^{\text{\tiny th}}$ powers in $R$
\begin{itemize}
\item where $n = 2$, if $k = 3$ as long as $3,4,7 \nmid \vert R \vert $ \vspace{-0.3cm}
\item where $n = 5$, if $k = 4$ as long as $2, 9 \nmid \vert R \vert $ \vspace{-0.3cm}
\item where $n = 3$, if $k = 5$ as long as $5, 11, 16 \nmid \vert R \vert $ \vspace{-0.3cm}
\item where $n = 7$, if $k = 6$ as long as $2, 3, 25 \nmid \vert R \vert $ \vspace{-0.3cm}  
\item where $n = 4$, if $k = 7$ as long as $7, 8 \nmid \vert R \vert $ \vspace{-0.3cm}
\end{itemize}
where one can lower the values of $n$ for any given $k$ as long as one is willing to exclude a few more numbers from dividing $\vert R \vert $.

In order to save space, we limit our explicit statements in Theorem~\ref{importantTable} to $3 \leqslant  k \leqslant 11$, but similar results could easily be obtained for any value of $k$ for which one has results over finite fields. This reduces the question of Waring's problem over general finite rings to the analogous questions over finite fields.

Note that Small's solution to Waring's problem $\bmod \ n$ when $n$ is a prime answers Waring's problem for a special class of finite fields. In addition, Small obtains a result for more general finite fields by applying an inequality from the theory of diagonal equations over such fields to prove that if $q > (k-1)^4$, then every element of  $\Bbb F_q$ can be written as a sum of two $k^{\text{\tiny th}}$ powers in \cite{CS3} (which when translated into the language of this paper says $\gamma (k, q) \leqslant 2$ whenever $q>(k-1)^4$). In Section~\ref{fields section}, we deduce an asymptotically equivalent result to his and to results of \cite{Weil1949}, \cite{Winterhof1} by applying graph theoretic methods yielding self contained elementary proofs that when $q>k^4$, every element of the field can be written as a sum of two $k^{\text{\tiny th}}$ powers, and when $q > k^3$, as three $k^{\text{\tiny th}}$ powers, etc. We also enlist the help of a supercomputer to calculate the values for $\gamma (k, q)$ for all $q$ too small to be covered by the above mentioned results when $k \leqslant 37$.

On top of this, in the process of proving our finite field Waring's problem results (similar to results that were already known), we also obtain the following new result providing an analogue of S{\'a}rk{\"o}zy's theorem in the finite field setting.

\begin{thm*} 
Let $k$ be a positive integer. If $E$ is a subset of $\Bbb F_q$ with size $|E| > \dfrac{qk}{\sqrt{q-1}}$ then it contains at least two distinct elements whose difference is a $k^{\text{\tiny th}}$ power. Thus, in particular if $k$ is fixed and $|E|=\Omega \left(\sqrt{q} \right)$ as $q \rightarrow \infty$, then $E$ contains at least two distinct elements whose difference is a $k^{\text{\tiny th}}$ power.
\end{thm*}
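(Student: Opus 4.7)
The plan is to follow the spectral graph-theoretic approach that the paper has signaled. First I would introduce the Cayley graph $\Gamma_k$ on the additive group $(\Bbb F_q, +)$ with connection set $S := \{x^k : x \in \Bbb F_q^*\}$, the set of nonzero $k$-th powers. Since $S$ is the unique subgroup of $\Bbb F_q^*$ of index $d := \gcd(k, q-1)$, the graph $\Gamma_k$ is regular of degree $|S| = (q-1)/d$. Two distinct elements $x, y \in E$ satisfy $y - x \in S$ precisely when $(x, y)$ is an edge of $\Gamma_k$, so the theorem reduces to showing that any sufficiently large vertex subset must contain at least one edge.

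Next I would compute the eigenvalues of $\Gamma_k$. Because $\Gamma_k$ is a Cayley graph on an abelian group, its adjacency operator commutes with translations and so is diagonalized by the characters; the eigenvalues are $\lambda_\psi = \sum_{s \in S}\psi(s)$ indexed by the additive characters $\psi$ of $\Bbb F_q$. The trivial character gives $\lambda_{\psi_0} = |S|$. For $\psi \neq \psi_0$ I would combine the orthogonality identity
\[
\sum_{\chi^d = 1} G(\chi, \psi) \;=\; d\sum_{s \in S}\psi(s),
\]
where $\chi$ ranges over the multiplicative characters of $\Bbb F_q^*$ of order dividing $d$, with the fact $G(\chi_0, \psi) = -1$, to obtain $d\lambda_\psi = -1 + \sum_{\chi \neq \chi_0,\,\chi^d = 1} G(\chi, \psi)$. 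The Weil/Gauss-sum identity $|G(\chi, \psi)| = \sqrt{q}$ for nontrivial $\chi$ then yields
\[
|\lambda_\psi| \;\leq\; \frac{(d-1)\sqrt{q} + 1}{d} \;\leq\; \sqrt{q - 1},
\]
the final inequality being a short elementary estimate equivalent to $d \leq (\sqrt{q} - 1)(\sqrt{q} + \sqrt{q-1})$, which is automatic since $d \leq q - 1$.

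Finally I would invoke the expander mixing lemma: writing $e(E, E) := |\{(x, y) \in E \times E : y - x \in S\}|$, one has
\[
\left| e(E, E) - \frac{|S||E|^2}{q} \right| \;\leq\; \max_{\psi \neq \psi_0} |\lambda_\psi| \cdot |E|.
\]
If $E$ contained no two distinct elements with $k$-th power difference then $e(E, E) = 0$, and the displayed inequality would force
\[
|E| \;\leq\; \frac{q\max|\lambda_\psi|}{|S|} \;\leq\; \frac{q\sqrt{q-1}\cdot d}{q - 1} \;=\; \frac{qd}{\sqrt{q - 1}} \;\leq\; \frac{qk}{\sqrt{q-1}},
\]
contrary to hypothesis. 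The asymptotic statement is then immediate.

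The main obstacle will be establishing the sharp eigenvalue bound $|\lambda_\psi| \leq \sqrt{q - 1}$ rather than the more obvious $\sqrt{q}$: this requires carefully tracking the boundary contribution $G(\chi_0, \psi) = -1$ produced by the trivial multiplicative character, and verifying the elementary inequality $((d-1)\sqrt{q} + 1)/d \leq \sqrt{q-1}$. Once that is in hand the remainder of the argument is routine spectral bookkeeping.
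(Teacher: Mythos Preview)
Your argument is correct and follows the same overall spectral-gap strategy on the Cayley graph $Cay(\Bbb F_q, R_k^\ast)$ that the paper uses, arriving at the identical threshold $n_\ast \leq \dfrac{qd}{\sqrt{q-1}} \leq \dfrac{qk}{\sqrt{q-1}}$. The one substantive difference lies in how the nontrivial eigenvalues are controlled. You express $d\lambda_\psi$ as a sum of Gauss sums and invoke $|G(\chi,\psi)|=\sqrt{q}$ to obtain $|\lambda_\psi|\le ((d-1)\sqrt{q}+1)/d$, which you then relax to $\sqrt{q-1}$ via the elementary inequality $(\,\sqrt{q}-1)(\sqrt{q-1}-1)\ge 0$. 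The paper instead avoids Gauss sums entirely: it observes that the nontrivial eigenvalues fall into $d$ classes of common value (one per coset of $R_k^\ast$), computes $\operatorname{tr}(\mathbb{A}\mathbb{A}^T)=q|R_k^\ast|$, and deduces $\max_i\|\lambda_{\alpha_i}\|\le\sqrt{q-|R_k^\ast|}$ by a pure averaging argument; the bound $n_\ast\le qd/\sqrt{q-1}$ then follows from the algebraic simplification $\sqrt{q-|R_k^\ast|}=\sqrt{(qd-q+1)/d}\le\sqrt{d(q-1)/d}$ using $d\le q-1$. Your Gauss-sum bound is pointwise sharper, but the paper's trace argument is fully self-contained and elementary, which is one of its stated goals; either route closes the proof without difficulty.
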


Our main tool in the proofs of our finite field and S{\'a}rk{\"o}zy's results is a spectral gap theorem for Cayley digraphs for which we provide a self-contained proof in Section~\ref{spectral section}. By building on these finite field results, we also obtain in Section~\ref{matrix section} upper bounds for Waring numbers for matrix rings over finite fields (hence over general semisimple finite rings) that are stronger than our aforementioned results obtained in the case of more general finite rings. Results relating to matrix rings exist in the literature (see the history section and references in \cite{Katre} for a summary), but they often focus on existence results, showing that a given matrix over a commutative ring can be written as a sum of some number of $k^{\text{\tiny th}}$ powers, rather than determining how many $k^{\text{\tiny th}}$ powers need to be used. As was the case for our more general results mentioned above, our results over matrix and semisimple finite rings also rely on an efficient use of Hensel's Lemma, (see Theorem~\ref{matrixTable} for more details).

\section{Preliminaries}

\subsection{Spectral Graph Theory} \label{spectral section}

The material presented here prior to Cayley digraphs is pretty standard and can be found in any introductory graph theory book, e.g. \cite{Godsil}, \cite{West}. 

A \emph{graph} $G$ consists of a \emph{vertex set} $V(G)$, an \emph{edge set} $E(G)$, and a relation that associates with each edge two vertices called its \emph{endpoints}. A \emph{directed graph} or \emph{digraph} is a graph in which the endpoints of each edge is an ordered pair of vertices. If $(u,v)$ also denoted by $u \rightarrow v$ is such an ordered pair of vertices, that means there exists an edge in the digraph with \emph{tail} $u$ and \emph{head} $v$. When we draw a digraph, we represent the vertices with some dots and the directed edges between the vertices with some curves on the plane, and we give each edge a direction from its tail to its head. We define the \emph{out-degree} of a vertex $u$, denoted by $d^{+}(u)$, as the number of edges with tail $u$ and the \emph{in-degree}, $d^{-}(u)$, as the number of edges with head $u$. Each loop (i.e. an edge have the same endpoints) contributes $+1$ to both in- and out-degree. As a side note, notice that the summation of in-degrees and out-degrees are always equal to each other, and equals to the number of edges. We define the \emph{adjacency matrix} $\mathbb{A}_{G}$ (or $\mathbb{A}$) of a digraph $G$ as a matrix with rows and columns indexed by the vertices of $G$, such that the $uv$-entry of $\mathbb{A}$ is equal to the number of edges with tail $u$ and head $v$. Since $\mathbb{A}_{G}$ is not necessarily symmetric, unfortunately we lose the spectral theorem of linear algebra in most cases when $G$ is a digraph, but we can still consider the spectrum of $\mathbb{A}_{G}$ and it turns out to be still very useful.

If $G$ is a digraph, a \emph{walk} of length $r$ in $G$ can be defined as a sequence of vertices $( v_0, v_1, \cdots, v_r)$ with the only restriction that the ordered pair $(v_{i-1},v_{i})$ is the endpoints of some edge for every $1 \leqslant i \leqslant r$. That is to say if we want to walk from $v_{i-1}$ to $v_{i}$, we need the existence of an edge with the tail $v_{i-1}$ and head $v_{i}$, i.e. the direction matters in digraphs in contrast to graphs. On the other hand the definition of walk allows us to go over any edge and any vertex more than once. By induction on $k$ we can show that the $uv$-entry of $\mathbb{A}^k$, where $\mathbb{A}$ denotes the adjacency matrix of $G$, counts the (directed) walks of length $k$ from $u$ to $v$. It is also easy to show that if $G$ is a simple digraph, i.e. a digraph with no multi-edges and with at most one loop, then the trace of $\mathbb{A} \mathbb{A}^{T}$ equals $\sum_{u \in G} d^{+}(u)=\vert E(G) \vert$. Here multi-edge means there exists at least two edges which correspond to the same endpoints $u \rightarrow v$. 

Let $H$ be a finite abelian group and $S$ be a subset of $H$. The \emph{Cayley digraph} $Cay(H,S)$ is the simple digraph whose vertex set is $H$ and $u \rightarrow v$ if and only if $v-u \in S$. By definition $Cay(H,S)$ is a simple digraph with $d^{+}(u)=d^{-}(u)=|S|$. Furthermore, if we have a Cayley digraph, then we can find its spectrum easily using characters from representation theory, see \cite{Serre} for a rigorous treatment of character theory. A function $\chi: H \longrightarrow \mathbb{C}$ is a \emph{character} of $H$ if $\chi$ is a group homomorphism from $H$ into the multiplicative group $\mathbb{C}^{\ast}$. If $\chi(h)=1$ for every $h\in H$, we say $\chi$ is the \emph{trivial character}. The following theorem is a very important well-known fact, see e.g. \cite{Brouwer-Spectra}: 
\begin{thm} \label{character theory}
Let $\mathbb{A}$ be an adjacency matrix of a Cayley digraph $Cay(H,S)$. Let $\chi$ be a character on $H$. Then the vector $(\chi(h))_{h \in H}$ is an eigenvector of $\mathbb{A}$, with eigenvalue $\sum_{s \in S} \chi(s)$. In particular, the trivial character corresponds to the trivial eigenvector $\boldsymbol{1}$ with eigenvalue $|S|$.
\end{thm}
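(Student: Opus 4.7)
The plan is a direct computation: take the candidate eigenvector $\mathbf{v} = (\chi(h))_{h \in H}$ and apply $\mathbb{A}$ to it componentwise, using both the definition of the Cayley digraph and the multiplicativity of the character $\chi$ to factor out a copy of $\chi(u)$ at each coordinate. The claim about the trivial character will then be immediate.

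More concretely, first I would unpack the $u$-th entry of $\mathbb{A}\mathbf{v}$. Since $Cay(H,S)$ is simple with $u \to v$ iff $v - u \in S$, the adjacency entry $\mathbb{A}_{u,v}$ is $1$ precisely when $v - u \in S$ and $0$ otherwise, so
\[
(\mathbb{A}\mathbf{v})_u \;=\; \sum_{v \in H} \mathbb{A}_{u,v}\, \chi(v) \;=\; \sum_{\substack{v \in H \\ v - u \in S}} \chi(v).
\]
Next I would reindex the sum by setting $s = v - u$, so that $v = u + s$ ranges over $u + S$ as $s$ ranges over $S$. This yields
\[
(\mathbb{A}\mathbf{v})_u \;=\; \sum_{s \in S} \chi(u + s).
\]

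Now I would invoke the fact that $\chi$ is a character, i.e.\ a homomorphism from $(H,+)$ into $(\mathbb{C}^{\ast},\cdot)$, so $\chi(u+s) = \chi(u)\chi(s)$. Factoring $\chi(u)$ out of the sum gives
\[
(\mathbb{A}\mathbf{v})_u \;=\; \chi(u) \sum_{s \in S} \chi(s) \;=\; \Big(\sum_{s \in S} \chi(s)\Big) \mathbf{v}_u,
\]
which holds for every $u \in H$ and therefore proves that $\mathbf{v}$ is an eigenvector of $\mathbb{A}$ with eigenvalue $\lambda = \sum_{s \in S}\chi(s)$. The statement about the trivial character then follows by specialization: when $\chi \equiv 1$, the vector $\mathbf{v}$ becomes $\boldsymbol{1}$ and $\lambda = \sum_{s \in S} 1 = |S|$.

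There is essentially no obstacle here; the only subtlety to be careful about is the indexing convention for $\mathbb{A}$ (row $u$, column $v$, with the edge oriented $u \to v$), so that the substitution $s = v - u$ matches the definition of the Cayley digraph. Once the convention is pinned down, the argument is a one-line use of the homomorphism property of $\chi$.
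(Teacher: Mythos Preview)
Your proof is correct and is essentially identical to the paper's own argument: both compute the $u$-th entry of $\mathbb{A}\mathbf{v}$ as $\sum_{s\in S}\chi(u+s)$, apply the homomorphism property $\chi(u+s)=\chi(u)\chi(s)$, and factor out $\chi(u)$. The paper's proof does not explicitly spell out the trivial-character specialization, but your added sentence there is routine.
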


\begin{proof} 
Let $u_1,u_2, \cdots, u_n$ be an ordering of the vertices of the digraph and let $\mathbb{A}$ correspond to this ordering. Pick any $u_i$. Then we have \[ \sum_{j=1}^{n} \mathbb{A}_{ij} \chi(u_j)= \sum_{u_{i} \rightarrow u_{j}} \chi(u_{j})=\sum_{s \in S} \chi(u_{i}+s)= \sum_{s \in S} \chi(u_{i})\chi(s)=\left[\sum_{s \in S} \chi(s)\right]\chi(u_{i}).\] Thus \[ \mathbb{A} \chi= \left[\sum_{s \in S} \chi(s)\right] \chi .\]
\end{proof}

Notice that we get $|H|$ many eigenvectors from the characters as demonstrated in the previous theorem, and they are all distinct since they are orthogonal by \emph{character orthogonality}, \cite{Serre}. This means adjacency matrix $\mathbb{A}$ of a Cayley digraph is diagonalizable and we know all of the eigenvectors explicitly, assuming we know all of the characters of $H$. This shows us that $\mathbb{A}$ is non-defective and the eigenvectors of $\mathbb{A}$ form an eigenbasis. Indeed, it will be very important in the proof of the spectral gap theorem (upcoming). Recall when we have a (undirected) graph, since we have $\mathbb{A}$ real and symmetric, we know there exists an eigenbasis of $\mathbb{A}$ by the spectral theorem of linear algebra. On the other hand, this is not the case for digraphs in general. For example, consider the digraph with vertex set $V(G)=\{u,v\}$ and with only one directed edge $u \rightarrow v$. In this case $G$ is a simple digraph with no loops; $\mathbb{A}_{G}=\begin{bmatrix} 0 & 1  \\ 0 & 0 \end{bmatrix}$ has only one eigenvector, so $\mathbb{A}_{G}$ does not have an eigenbasis. This makes Cayley digraphs nicer than general simple digraphs. 

Theorem~\ref{SGT} (viz. spectral gap theorem) below is a very important and widely used tool in graph theory by itself. Hence we intend to give a self-contained proof.

\begin{thm}[Spectral Gap Theorem For Cayley Digraphs] \label{SGT}
Let $Cay(H,S)$ be a Cayley digraph of order $n$. Let $\{ \chi_i \}_{i=1,2, \cdots, n}$ be the set of all distinct characters on $H$ such that $\chi_{1}$ is the trivial one. Define \[ n_{\ast}= \frac{n}{|S|} \left( \max_{2 \leqslant i \leqslant n} \Big\| \sum_{s \in S} \chi_{i}(s) \Big\| \right) \] and let $X,Y$ be subsets of vertices of $Cay(H,S)$. If $\sqrt{|X||Y|} > n_{\ast}$, then there exists a directed edge between a vertex in $X$ and a vertex in $Y$. In particular if $|X|> n_{\ast}$, then there exists at least two distinct vertices of $X$ with a directed edge between them.
\end{thm}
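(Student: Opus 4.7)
The plan is the standard expander--mixing argument, adapted to Cayley digraphs by using Theorem~\ref{character theory} in place of the spectral theorem for symmetric matrices. The key point is that the $n$ characters $\chi_1,\ldots,\chi_n$ of the abelian group $H$, once rescaled to $e_i := \chi_i/\sqrt{n}$, form an \emph{orthonormal} basis of $\mathbb{C}^n$ under the Hermitian inner product (this is character orthogonality); by Theorem~\ref{character theory} each $e_i$ is an eigenvector of $\mathbb{A}$ with eigenvalue $\mu_i = \sum_{s\in S}\chi_i(s)$, with $\mu_1 = |S|$ for the trivial character and $|\mu_i| \le \lambda := \frac{|S|}{n} n_{\ast}$ for $i \ge 2$.

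The edge count between $X$ and $Y$ satisfies $e(X,Y) = \mathbf{1}_X^{T}\mathbb{A}\mathbf{1}_Y = \mathbf{1}_X^{\ast}\mathbb{A}\mathbf{1}_Y$ since the indicator vectors are real. Expanding $\mathbf{1}_X = \sum_i \alpha_i e_i$ and $\mathbf{1}_Y = \sum_i \beta_i e_i$ in the eigenbasis and diagonalizing, I would write
$$e(X,Y) \;=\; \sum_{i=1}^{n} \mu_i\,\overline{\alpha_i}\,\beta_i.$$
The $i=1$ term gives the expected count: since $\chi_1 \equiv 1$ we have $\alpha_1 = |X|/\sqrt{n}$, $\beta_1 = |Y|/\sqrt{n}$, producing $|S||X||Y|/n$. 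For the remaining $i \ge 2$ terms I would apply Cauchy--Schwarz and Parseval, using $\sum_i |\alpha_i|^2 = \|\mathbf{1}_X\|^2 = |X|$ and similarly for $Y$, to get the bound $\lambda\sqrt{|X||Y|}$. Combining, $e(X,Y) \ge \frac{|S||X||Y|}{n} - \lambda\sqrt{|X||Y|}$, which is strictly positive precisely when $\sqrt{|X||Y|} > n\lambda/|S| = n_{\ast}$, forcing the existence of a directed edge from some $x\in X$ to some $y\in Y$. For the ``in particular'' clause, I would take $Y = X$ (possibly assuming $0\notin S$ so that the edges produced are not loops; otherwise a slightly larger lower bound on $|X|$ is needed to outweigh the $|X|$ many loops).

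The main obstacle is a conceptual one rather than a computational one: one would like to say ``$\mathbb{A} = \sum_i \mu_i e_i e_i^{\ast}$ by the spectral theorem,'' but $\mathbb{A}$ is not symmetric here. This is resolved by noting that for abelian $H$ the adjacency matrix $\mathbb{A}$ is in fact normal (it is a sum of commuting permutation matrices $P_s$, and $\mathbb{A}\mathbb{A}^{\ast} = \sum_{s,t} P_{s-t} = \mathbb{A}^{\ast}\mathbb{A}$), so the eigenbasis provided by the characters is automatically orthonormal and the spectral decomposition above is valid. Alternatively, one can bypass the normality observation entirely and simply verify the expansion of $e(X,Y)$ by substituting $\mathbf{1}_X = \sum_i \alpha_i e_i$ into $\mathbf{1}_X^{\ast}\mathbb{A}\mathbf{1}_Y$ and using $\mathbb{A}e_i = \mu_i e_i$ together with $\langle e_i, e_j\rangle = \delta_{ij}$. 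The rest of the argument is then bookkeeping with Cauchy--Schwarz and Parseval.
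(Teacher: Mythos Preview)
Your proposal is correct and follows essentially the same route as the paper: both arguments use the orthonormal eigenbasis of characters from Theorem~\ref{character theory}, expand $\mathbf{1}_X^{\ast}\mathbb{A}\mathbf{1}_Y$ in that basis to isolate the main term $|S||X||Y|/n$, and then bound the remainder by Cauchy--Schwarz together with Parseval/Plancherel. Your remarks on normality of $\mathbb{A}$ and on the role of $0\notin S$ for the ``distinct vertices'' clause are welcome clarifications, but they do not change the underlying strategy.
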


\begin{proof} 
Let $u_1,u_2, \cdots, u_n$ be an ordering of the vertices of $Cay(H,S)$ and let $\mathbb{A}$ be the adjacency matrix corresponding to this ordering. By Theorem~\ref{character theory} each $\chi_i$ gives us an eigenvector for $\mathbb{A}$ with an eigenvalue $\sum_{s \in S} \chi_{i}(s) $. We can normalize $\chi_i$ for each $i \in \{1,2, \cdots, n \}$ and get $v_i$'s so that $v_1, \cdots , v_n$ is an orthonormal basis for $\mathbb{C}^n$. Therefore, any complex $n$-dimensional vector $v$ can be written as $v= \sum^n_{j=1} \langle v,v_j\rangle v_j$ where $\langle-,-\rangle$ denotes the Hermitian inner product in $\mathbb{C}^n$. Notice 
\begin{equation} \label{Planch}
\langle v,v \rangle = \left\langle \sum^n_{j=1}   \langle v, v_j \rangle v_j , v \right \rangle =\sum^n_{j=1}  \langle v, v_j \rangle  \langle v_j , v \rangle =\sum_{j=1}^{n} \|\langle v,v_j\rangle\|^2 
\end{equation} which is also known as the Plancherel identity. 

Define $\boldsymbol{1_{X}}$ as the $0$-$1$ column vector whose $i^{\text{\tiny th}}$ entry is $1$ when $u_{i} \in X$ and $0$ otherwise. Define $\boldsymbol{1_{Y}}$ similarly. If we calculate
\[\mathbb{A}\boldsymbol{1_{Y}}=\begin{bmatrix} a_1\\ a_2\\ \vdots \\a_n \end{bmatrix}\] then each $a_i$ denotes the number of directed edges from $u_{i}$ to the vertices of $Y$. If we multiply both sides of the equation with $\boldsymbol{1^T_{X}}$ from the left, we get \[\boldsymbol{1^T_{X}} \mathbb{A} \boldsymbol{1_{Y}}= \boldsymbol{1^T_{X}} \begin{bmatrix} a_1\\ a_2\\ \vdots \\a_n \end{bmatrix}= \sum_{u_i \in X} a_{i}\] which is indeed exactly the number of edges from the vertices of $X$ to the vertices of $Y$. This calculation shows that as long as $\boldsymbol{1^T_{X}} \mathbb{A} \boldsymbol{1_{Y}} > 0$ there will be a directed edge from a vertex in $X$ to a vertex in $Y$.

On the other hand we also have  
\begin{align} 
\boldsymbol{1^T_{X}} \mathbb{A} \boldsymbol{1_{Y}}&= \langle \boldsymbol{1_{X}},\mathbb{A} \boldsymbol{1_{Y}} \rangle = \left\langle \boldsymbol{1_{X}}, \mathbb{A} \left( \sum_{j=1}^n \langle \boldsymbol{1_{Y}}, v_{j} \rangle v_{j} \right) \right\rangle = \left\langle \boldsymbol{1_{X}}, \left( \sum_{j=1}^n \langle \boldsymbol{1_{Y}}, v_j \rangle \lambda_{j} v_{j} \right) \right\rangle \nonumber \\ &=\sum_{j=1}^n \left\langle \boldsymbol{1_{X}}, \langle \boldsymbol{1_{Y}}, v_{j} \rangle \lambda_{j} v_{j} \right\rangle= \sum_{j=1}^n \langle \boldsymbol{1_{X}},\lambda_{j} v_j \rangle \overline {\langle \boldsymbol{1_{Y}}, v_{j} \rangle} =\sum_{j=1}^n  \overline {\lambda_{j}}  \langle \boldsymbol{1_{X}}, v_{j} \rangle \langle v_{j},  \boldsymbol{1_{Y}} \rangle\label{eqn1} 
\end{align}

By Theorem~\ref{character theory} we know one of the eigenvalues of $Cay(H,S)$ is $\lambda_1=|S|$ with eigenvector $v_1= \frac{1}{\sqrt{n}}\boldsymbol{1}$ and by substitution in \eqref{eqn1} we get
\[\boldsymbol{1^T_{X}} \mathbb{A} \boldsymbol{1_{Y}} = \frac{|X||Y||S|}{n} + \sum_{j=2}^n \overline {\lambda_{j}}  \langle \boldsymbol{1_{X}}, v_j \rangle \langle v_{j} , \boldsymbol{1_{Y}} \rangle.\] Let \[E = \sum_{j=2}^n \overline{\lambda_j} \langle  v_j , \boldsymbol{1_{Y}} \rangle \langle \boldsymbol{1_{X}}, v_j \rangle .\] We will show if $\sqrt{|X||Y|} > n_{\ast}$ then $\dfrac{|X||Y||S|}{n} > |E|$, and the result will follow.

By the Cauchy-Schwarz inequality we have \[ |E| \leqslant ( \max_{2 \leqslant j \leqslant n} \| \lambda_j \| ) \left(\sum_{j=1}^n \| \langle \boldsymbol{1_{Y}} ,v_j \rangle \|^2 \right)^{1/2} \left(\sum_{j=1}^n \| \langle \boldsymbol{1_{X}} ,v_j \rangle \|^2 \right)^{1/2} \] 

Moreover by Plancherel's equality we have \[ \lvert E \rvert \leqslant \left( \max_{2 \leqslant j \leqslant n} \| \lambda_j \| \right) \sqrt{\lvert X \rvert \lvert Y \rvert}.\] Hence, the result follows as long as \[ \frac{|S| \lvert X \rvert \lvert Y\rvert}{n} > \left( \max_{2 \leqslant j \leqslant n} \| \lambda_j \| \right) \sqrt{\lvert X \rvert \lvert Y \rvert} \] holds. The only thing left to point out is that in the statement of the theorem we substituted $\lambda_j$'s using Theorem~\ref{character theory}.
\end{proof}
Keep the notation in the previous theorem for the next corollary. Define \[ n_{\ast,k}= \frac{n}{|S|^{k}} \left( \max_{2 \leqslant i \leqslant n} \Big\| \sum_{s \in S} \chi_{i}(s) \Big\| \right)^{k}.\]
\begin{cor} \label{fancy spectral gap theorem}
If $n_{\ast,k} < \sqrt{|X||Y|}$, then there exists a directed $k$-walk from a vertex in $X$ to a vertex in $Y$. In particular if $n_{\ast,k} < |X|$, then there exists at least two distinct vertices of $X$ with a directed $k$-walk between them.
\end{cor}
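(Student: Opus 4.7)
My plan is to replay the proof of Theorem \ref{SGT} essentially verbatim, but with $\mathbb{A}$ replaced by its $k$-th power $\mathbb{A}^{k}$. Two observations make this a routine upgrade. First, as noted earlier in the paper, the $(u,v)$-entry of $\mathbb{A}^{k}$ counts directed $k$-walks from $u$ to $v$, so the existence of a directed $k$-walk from some vertex of $X$ to some vertex of $Y$ is equivalent to the strict positivity of $\boldsymbol{1^{T}_{X}} \mathbb{A}^{k} \boldsymbol{1_{Y}}$. Second, since the characters of $H$ supply the orthonormal eigenbasis $v_{1}, \dots, v_{n}$ of $\mathbb{A}$ with eigenvalues $\lambda_{i} = \sum_{s \in S} \chi_{i}(s)$, the very same basis diagonalizes $\mathbb{A}^{k}$ with eigenvalues $\lambda_{i}^{k}$; in particular $v_{1} = \frac{1}{\sqrt{n}}\boldsymbol{1}$ continues to correspond to the eigenvalue $\lambda_{1}^{k} = |S|^{k}$.

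With these in hand, I would expand $\boldsymbol{1_{Y}}$ in the orthonormal eigenbasis and peel off the $j = 1$ contribution in exact analogy with \eqref{eqn1}, yielding
\[
\boldsymbol{1^{T}_{X}} \mathbb{A}^{k} \boldsymbol{1_{Y}} \;=\; \frac{|X||Y||S|^{k}}{n} \;+\; \sum_{j=2}^{n} \overline{\lambda_{j}^{k}}\,\langle \boldsymbol{1_{X}}, v_{j} \rangle\, \langle v_{j}, \boldsymbol{1_{Y}} \rangle.
\]
Applying Cauchy-Schwarz followed by the Plancherel identity \eqref{Planch} bounds the absolute value of the error sum by $\bigl(\max_{2 \leqslant j \leqslant n} |\lambda_{j}|^{k}\bigr)\sqrt{|X||Y|}$, so the main term dominates precisely when
\[
\frac{|S|^{k} \sqrt{|X||Y|}}{n} \;>\; \max_{2 \leqslant j \leqslant n} |\lambda_{j}|^{k},
\]
which is the hypothesis $\sqrt{|X||Y|} > n_{\ast,k}$ after substituting $\lambda_{j} = \sum_{s \in S} \chi_{j}(s)$. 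The \emph{in particular} clause is then simply the specialization $X = Y$, parallel to Theorem \ref{SGT}.

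I do not expect any serious obstacle: the entire diagonalization/Plancherel framework from the proof of Theorem \ref{SGT} transports verbatim, with only the cosmetic replacements $|S| \mapsto |S|^{k}$ and $|\lambda_{j}| \mapsto |\lambda_{j}|^{k}$. The only point that genuinely deserves a line of justification is that $\mathbb{A}^{k}$ is still diagonalized by the character basis, which is immediate from Theorem \ref{character theory}. The payoff of packaging the argument this way is that questions like ``is every element of a finite group a sum of $k$ elements of $S$?'' reduce to walk existence in a single Cayley digraph with an explicit and controllable spectrum, which is precisely the mechanism that will drive the finite-field Waring and S\'ark\"ozy applications in later sections.
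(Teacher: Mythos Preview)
Your proposal is correct and is essentially the paper's own argument: form the digraph with adjacency matrix $\mathbb{A}^{k}$, note that it is diagonalized by the same character eigenbasis with eigenvalues $\lambda_{j}^{k}$, and rerun the proof of Theorem~\ref{SGT}. The paper phrases this slightly more abstractly (``apply the theorem to the new graph'') while you spell out the expansion and Cauchy--Schwarz/Plancherel steps explicitly, but the content is identical.
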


The proof of the corollary follows from a well-known trick in graph theory. We form a new graph with the same vertex set $Cay(H,S)$, where every directed edge from $u$ to $v$ in the new graph corresponds to an oriented walk of length $k$ from $u$ to $v$ in the original digraph. This new graph can be most easily defined as the digraph with the same vertex set as new graph and adjacency matrix $\mathbb{A}^k$ where $\mathbb{A}$ is adjacency matrix of the original graph. Notice $\mathbb{A}^k$ has the same orthogonal eigenbasis $\mathbb{A}$ does and its eigenvalues are the $k^{\text{\tiny th}}$ powers of $\mathbb{A}$'s. So, the corollary follows when we apply the theorem on the new graph. Furthermore, notice that we can prove this theorem for any digraph with orthogonal eigenbasis, and not just for Cayley digraphs.
  
\begin{cor} [Spectral Gap Theorem For $d$-regular Graphs]
Let $G$ be a $d$-regular simple graph with eigenvalues $\lambda_{1} \geqslant \cdots \geqslant \lambda_{n}$. Let \[ n_{\ast}= \frac{n}{d} \left( \max_{2 \leqslant i \leqslant n} | \lambda_i | \right) \] and let $X,Y \subset V(G)$ be subsets of vertices. If $\sqrt{|X||Y|} > n_{\ast}$, then there exists an edge incident to a vertex in $X$ and a vertex in $Y$. In particular if $|X|> n_{\ast}$, then there exists at least two vertices of $X$ adjacent to each other.
\end{cor}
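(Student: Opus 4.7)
The plan is to mirror the argument used for Theorem~\ref{SGT}, exploiting the fact that for an undirected $d$-regular simple graph the adjacency matrix $\mathbb{A}$ is real symmetric, so the ordinary spectral theorem (rather than character theory) provides the required orthonormal eigenbasis. Concretely, the remark already made after Corollary~\ref{fancy spectral gap theorem}, that the key estimate holds for \emph{any} digraph admitting an orthonormal eigenbasis, applies verbatim here, with the one eigenvalue $\lambda_1=d$ corresponding to the eigenvector $\frac{1}{\sqrt{n}}\boldsymbol{1}$ playing the role of the trivial character.

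Here are the concrete steps I would carry out. First, invoke the spectral theorem to obtain an orthonormal eigenbasis $v_1,\dots,v_n$ of $\mathbb{A}$, with real eigenvalues $\lambda_1\geqslant\cdots\geqslant\lambda_n$; since $G$ is $d$-regular we have $\lambda_1=d$ with eigenvector $v_1=\frac{1}{\sqrt{n}}\boldsymbol{1}$. Second, form the indicator column vectors $\boldsymbol{1_X}$ and $\boldsymbol{1_Y}$ and observe that $\boldsymbol{1^T_X}\mathbb{A}\boldsymbol{1_Y}$ equals the number of ordered pairs $(u,v)\in X\times Y$ with $uv\in E(G)$; in particular its positivity forces the existence of an edge between $X$ and $Y$. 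Third, expand $\boldsymbol{1_Y}$ in the eigenbasis and compute, exactly as in the proof of Theorem~\ref{SGT},
\[
\boldsymbol{1^T_X}\mathbb{A}\boldsymbol{1_Y}
 \;=\; \sum_{j=1}^{n} \lambda_j \,\langle \boldsymbol{1_X},v_j\rangle\,\langle v_j,\boldsymbol{1_Y}\rangle
 \;=\; \frac{d\,|X|\,|Y|}{n} + E,
\]
where $E=\sum_{j=2}^{n}\lambda_j\,\langle \boldsymbol{1_X},v_j\rangle\,\langle v_j,\boldsymbol{1_Y}\rangle$. Finally, bound $|E|$ by Cauchy--Schwarz together with Plancherel's identity $\sum_j|\langle\boldsymbol{1_X},v_j\rangle|^2=|X|$ and the analogous identity for $Y$, yielding
\[
|E| \;\leqslant\; \Bigl(\max_{2\leqslant j\leqslant n}|\lambda_j|\Bigr)\sqrt{|X|\,|Y|}.
\]
So whenever $\sqrt{|X|\,|Y|}>n_{\ast}=\frac{n}{d}\bigl(\max_{j\geqslant 2}|\lambda_j|\bigr)$, the main term dominates the error and $\boldsymbol{1^T_X}\mathbb{A}\boldsymbol{1_Y}>0$.

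For the ``in particular'' statement, take $X=Y$. The previous step produces at least one edge with both endpoints in $X$; because $G$ is a simple graph in the usual sense (no loops), that edge automatically joins two distinct vertices of $X$.

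I do not expect a genuine obstacle in this corollary. The only two small items worth verifying carefully are (a) that the adjacency matrix convention used in the paper really makes $\boldsymbol{1^T_X}\mathbb{A}\boldsymbol{1_Y}$ count edges between $X$ and $Y$ (which holds since $\mathbb{A}$ is the standard $0/1$ symmetric matrix for a simple graph), and (b) that the ``simple graph'' convention rules out loops, so that the $X=Y$ consequence actually yields two distinct adjacent vertices rather than a loop at some $u\in X$. Both are immediate from the definitions in Section~\ref{spectral section}.
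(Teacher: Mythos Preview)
Your proposal is correct and follows essentially the same route as the paper: the paper also notes that the corollary does not follow from the statement of Theorem~\ref{SGT} but from its proof, replacing the character-theoretic eigenbasis by the orthonormal eigenbasis guaranteed by the spectral theorem for the real symmetric matrix $\mathbb{A}$, with $\lambda_1=d$ and $v_1=\frac{1}{\sqrt{n}}\boldsymbol{1}$, after which the Cauchy--Schwarz/Plancherel estimate goes through verbatim. Your remarks on the interpretation of $\boldsymbol{1^T_X}\mathbb{A}\boldsymbol{1_Y}$ and on loops in the $X=Y$ case are exactly the two points the paper flags as needing adjustment.
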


This corollary does not follow from the statement of the theorem directly, but it follows from the proof. There are only a few points to change in the theorem's proof to prove the corollory. First, we do not always know the eigenvalues and eigenvectors of a regular simple graph explicitly. But we do know if $G$ is a $d$-regular simple graph with $n$ vertices, then it should have $\lambda=d$ as an eigenvalue with multiplicity at least $1$ and there exists an eigenbasis for $\mathbb{R}^n$. Second, since the $\boldsymbol{1^T_{X}} \mathbb{A} \boldsymbol{1_{Y}}$ term in the theorem's proof was the number of directed edges; it will be the number of edges between the vertices of $X$ and vertices of $Y$ such that the edges in the intersection counted twice, where \emph{the edges in the intersection} refers to the edges which has the endpoints in $X \cap Y$. However, the rest of the proof works as demonstrated above.

\subsubsection{Cayley Graphs and Character Theory}

We need to use characters only for finite fields in this paper. Let $\Bbb F_p$ be the finite field of prime order $p$. Every time we fix a $j \in \{ 0,1, \cdots, p-1 \}$ and define $\chi_{j}(x) := e^{\frac{2\pi ijx}{p}}$ for every $x \in \Bbb F_p$, $\chi_{j}$ becomes a character on the additive group structure of $\Bbb F_p$, \cite{Rosen}. Since different $j$ values provide us different characters on $\Bbb F_p$, we have $p$ many characters defined in this way, but the theory tells us we should have $p$ many characters for a group of order $p$, so we obtained all the characters on $\Bbb F_p$.

Let $\Bbb F_q$ be the finite field of order $q$ with characteristic $p$ and $\Bbb F_q^{\ast}$ denote the multiplicative group of it as usual. We want to give a characterization for the characters on $\Bbb F_q$. Once we know the characters for the base field $\Bbb F_p$, the characters of $\Bbb F_q$ can be obtained by following the process we are about to explain. Let $\alpha \in \Bbb F_q^{\ast}$ and define a map $\phi_{\alpha} : \Bbb F_q \longrightarrow \Bbb F_p$ as $\phi_{\alpha}(x) := \operatorname{tr}(\alpha x)$ for every $x \in \Bbb F_q$. Here $\alpha x$ denotes the multiplication of $\alpha$ and $x$ in $\Bbb F_q$, and $\operatorname{tr}$ is the usual field trace of $\Bbb F_q$ over $\Bbb F_p$. We can easily show that for any element $\alpha \in \Bbb F_q^{\ast}$ and for any character $\chi_{j}$ on $\Bbb F_p$, $\chi_{j} \circ \phi_{\alpha}$ defines a character on $\Bbb F_q$. We can also show that $\chi_{j} \circ \phi_{\alpha}$ is the trivial character on $\Bbb F_q$ if and only if $j$ is zero. Then also notice that $\chi_{j} \circ \phi_{\alpha}$ and $\chi_{j} \circ \phi_{\beta}$ is the same character if and only if $\alpha = \beta$. Hence, if we fix $j=1$ and if we range $\alpha$ in $\Bbb F_q^{\ast}$, $\chi_1 \circ \phi_{\alpha}$'s become distinct. That is why we have $q-1$ many distinct nontrivial characters by ranging $\alpha$. We also have the trivial character, and we can think of the trivial character as $j=1$ and $\alpha=0$. In this way we obtain at least $q$ many distinct characters on $\Bbb F_q$, but from the theory we know we should have exactly $q$ many characters on $\Bbb F_q$, which means \emph{any} character on $\Bbb F_q$ can be written explicitly as $\chi_{1} \circ \phi_{\alpha}$ for some $\alpha \in \Bbb F_q$.

\subsection{Finite Rings, Jacobson Radical and Artin-Wedderburn Theory}

As we stated before, the main purpose of this paper is to solve Waring's problem for finite rings with identity. We actually have a wide range of such rings. Some important examples are finite fields, the ring of integers modulo $n$ for each integer $n\geqslant 2$, matrix rings over finite fields, which we will denote by $\Bbb F_q$, $\mathbb{Z}_{n}$, $\operatorname{Mat}_n(\Bbb F_q)$, respectively. Some subrings of $\operatorname{Mat}_n(\Bbb F_q)$ (e.g. upper triangular matrices, lower triangular matrices, diagonal matrices etc., or $\Bbb F_q[A]$ for $A \in \operatorname{Mat}_n(\Bbb F_q)$) are also familiar examples. A particular class of examples, which will be used in Section~\ref{matrix section}, comes from quotients of polynomial rings over finite fields with some principal ideal generated by a polynomial over the same field $\left( \text{i.e. } \ \bigslant{\Bbb F_q[x]}{\langle f(x) \rangle} \right)$. If the polynomial is irreducible over $\Bbb F_q$, the quotient ring is isomorphic to a finite field. Otherwise it is not even an integral domain. In addition to those when we take a finite family of finite rings with identity, we can make the direct sum of underlying abelian groups into a ring by defining multiplication coordinate-wise and the resulting ring is a finite ring with identity.

Another important family of examples for finite rings with identity come from group rings. When $R$ is a ring with identity and $G$ is a group with operation $\ast$, the \emph{group ring} $R[G]$ is the set of all linear combinations of finitely many elements of $G$ with coefficients in $R$ i.e. any $x \in R[G]$ can be written as $\sum_{g \in G} a_{g} g$ where it is assumed that $a_g=0$ for all but finitely many elements of $G$. The addition is defined by the rule $\sum_{g \in G} a_{g} g + \sum_{g \in G} b_{g} g= \sum_{g \in G} (a_{g}+b_{g}) g$ and the multiplication is given by $\left(\sum_{g \in G} a_{g} g \right) \left( \sum_{g \in G} b_{g} g \right)=  \sum_{g \in G, \  h \in H} (a_{g}b_{h}) g \ast h$. The group ring of a finite group over any finite ring (or finite field) is a finite ring. When $R$ is a field, $R[G]$ is called a \emph{group algebra}. When the characteristic of the finite field divides the order of the group, the group algebra breaks up into finite rings called block rings.  

Artin-Wedderburn theory provides us very useful structure theorems for certain classes of rings. We now recall the basics of this theory. The omitted proofs are standard and can be found in most (noncommutative) algebra books presenting the subject. In particular see \cite{Farb-Algebra}, \cite{Hungerford} since our treatment follows theirs closely for the convention and notation.

Let $R$ be any ring, not necessarily finite. Then there is a left ideal $J(R)$, or just $J$, called the \emph{Jacobson radical} of $R$ such that $J=\bigcap\limits_{m \in \mathcal{M}}m$ where $\mathcal{M}$ is the set of all maximal left ideals. Let $x \in R$. It is a well-known fact that $x \in J$ if and only if $1+rxs$ is a unit in $R$ for all $r,s \in R$. We can also prove that $J$ is the intersection of all maximal right ideals of $R$. This combined with the definition of $J$  together implies that $J$ is a two-sided ideal. Also notice that $1 \notin J$ so $J \neq R$. 

A ring $R$ is said to be \emph{semisimple} if its Jacobson radical $J$ is zero. As we state later, Artin-Wedderburn theory gives us $\operatorname{Mat}_{n_1}(\Bbb F_{q_1}) \times \cdots \times \operatorname{Mat}_{n_j}(\Bbb F_{q_j})$, the direct product of finitely many matrix rings over finite fields, is a semisimple finite ring. Maschke's theorem shows many group algebras are semisimple, \cite{Curtis}:

\begin{thm}[Maschke's Theorem] \label{MT} 
Let $H$ be a finite group and let $F$ be a field whose characteristic does not divide the order of $H$. Then $F[H]$, the group algebra of $H$, is semisimple.
\end{thm}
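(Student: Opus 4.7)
The plan is to prove Maschke's Theorem by showing that the left regular $F[H]$-module decomposes into a direct sum of simple submodules, and then deducing from this decomposition that the Jacobson radical $J(F[H])$ vanishes. The central ingredient is an averaging argument whose legitimacy depends crucially on the hypothesis that $\operatorname{char}(F) \nmid |H|$, which is exactly what makes $|H|$ invertible in $F$.

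First I would prove the following splitting lemma: for any $F[H]$-module $V$ and any $F[H]$-submodule $W \subseteq V$, there exists an $F[H]$-submodule $W'$ with $V = W \oplus W'$. Since $F$ is a field, $V = W \oplus W_0$ as $F$-vector spaces, yielding an $F$-linear projection $\pi_0 \colon V \to W$. To upgrade $\pi_0$ to an $F[H]$-linear projection, I average over the group:
\[
\pi(v) \;=\; \frac{1}{|H|} \sum_{h \in H} h \cdot \pi_0 \bigl( h^{-1} \cdot v \bigr),
\]
where the scalar $1/|H|$ is well-defined precisely because $\operatorname{char}(F) \nmid |H|$. A short reindexing $h \mapsto gh$ in the defining sum shows $\pi(g v) = g \, \pi(v)$ for every $g \in H$, so $\pi$ is $F[H]$-linear; moreover, since $W$ is $H$-stable and $\pi_0|_W = \operatorname{id}_W$, the averaged map $\pi$ still restricts to the identity on $W$. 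Hence $W' := \ker \pi$ is the desired $F[H]$-complement.

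Next, I would apply the splitting lemma to the regular module $F[H]$ itself and iterate. Because $F[H]$ is finite-dimensional over $F$, any strictly descending chain of left ideals must terminate, so repeated use of the splitting on successive complements produces a decomposition $F[H] = L_1 \oplus \cdots \oplus L_r$ into minimal (hence simple) left ideals. For each $i$, the submodule $M_i = \bigoplus_{j \neq i} L_j$ is a maximal left ideal, since $F[H]/M_i \cong L_i$ is simple. As $J(F[H])$ is contained in every maximal left ideal,
\[
J(F[H]) \;\subseteq\; \bigcap_{i=1}^{r} M_i \;=\; 0,
\]
proving $F[H]$ is semisimple in the sense of the paper.

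The main conceptual obstacle is setting up the averaged projection correctly and verifying $F[H]$-linearity; this is exactly the point where the characteristic hypothesis is indispensable, since without invertibility of $|H|$ the definition of $\pi$ collapses. The remaining steps (iterated splitting in an Artinian setting and the standard containment of $J$ in every maximal left ideal) are routine once the key averaging step has been established.
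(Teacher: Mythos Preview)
Your proof is correct and follows the standard textbook argument for Maschke's Theorem: the averaging trick produces an $F[H]$-linear projection onto any submodule, the regular module then decomposes into simples by finite-dimensionality, and the intersection of the associated maximal left ideals forces $J(F[H])=0$. Each step is sound, and you have correctly identified where the hypothesis $\operatorname{char}(F)\nmid |H|$ enters.

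However, the paper does not actually prove this theorem. Maschke's Theorem is stated only as background in the preliminaries, with the proof outsourced to the reference \cite{Curtis}; the paper immediately moves on to the Artin--Wedderburn Theorem. So there is no ``paper's own proof'' against which to compare your approach. Your write-up is a perfectly good self-contained proof of a result the paper chose to quote rather than prove.
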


In the case when the characteristic of the finite field divides the order of the group, Maschke's theorem does not apply and the group algebra is no longer semisimple. Instead it decomposes into block rings, which are another class of finite rings, with great importance in modular representation theory, see \cite{Alperin}.

We now state the main theorem in Artin-Wedderburn theory that shows in particular that any finite semisimple ring is a product of matrix rings.
\begin{thm}[Artin-Wedderburn Theorem] \label{AWT}
The following conditions on a ring $R$ are equivalent:
\vspace{-0.3cm}
\begin{itemize}
	\item $R$ is a nonzero semisimple left Artinian ring.
	\vspace{-0.3cm}
	\item $R$ is a direct product of a finite number of simple ideals each of which is isomorphic to the endomorphism ring of a finite dimensional vector space over a division ring.
	\vspace{-0.3cm}
	\item There exist division rings $D_1, \cdots, D_t$ and positive integers $n_1, \cdots, n_t$ such that $R$ is isomorphic to the ring $\operatorname{Mat}_{n_{1}}(D_1) \times \operatorname{Mat}_{n_{2}}(D_2) \times \cdots \times \operatorname{Mat}_{n_{t}}(D_t)$. 
\end{itemize}
\end{thm}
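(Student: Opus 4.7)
The plan is to establish the cycle of implications $(1) \Rightarrow (2) \Rightarrow (3) \Rightarrow (1)$, which is the standard route through the theorem. The two easier steps are $(2) \Rightarrow (3)$ and $(3) \Rightarrow (1)$; the genuine content lies in $(1) \Rightarrow (2)$, where one must extract a direct product decomposition from the purely abstract hypothesis that $J(R)=0$ and that $R$ satisfies the descending chain condition on left ideals.

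For $(1) \Rightarrow (2)$, I would first show that $R$, viewed as a left module over itself, decomposes as an internal direct sum of minimal left ideals. The Artinian hypothesis guarantees that every nonzero left ideal contains a minimal one, and a Zorn/maximality argument combined with semisimplicity lets one write $R = L_1 \oplus \cdots \oplus L_r$ for minimal left ideals $L_i$; the sum is finite because $1 \in R$ can only involve finitely many summands. Next I would collect the $L_i$ into isomorphism classes as left $R$-modules, obtaining isotypic components $B_1, \ldots, B_t$ with $R = B_1 \oplus \cdots \oplus B_t$. The key structural claim is that each $B_k$ is a two-sided ideal: right multiplication by $r \in R$ is a left $R$-module homomorphism, so it sends each $L_i$ either to zero or to an isomorphic minimal left ideal, hence back into the same isotypic component. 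Writing $1 = e_1 + \cdots + e_t$ with $e_k \in B_k$, one checks that the $e_k$ are orthogonal central idempotents and that each $B_k = Re_k$ is a ring in its own right, with $R$ the internal direct product of the $B_k$. Each $B_k$ is then a simple ring (no nontrivial two-sided ideals), and by construction is the endomorphism ring of a simple module — giving assertion $(2)$.

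For $(2) \Rightarrow (3)$, I would invoke Schur's lemma: if $V$ is a finite-dimensional vector space over a division ring $D$, then $\mathrm{End}_D(V) \cong \operatorname{Mat}_n(D)$ after choosing a basis, where $n = \dim_D V$. Applying this to each simple factor in $(2)$ produces the required product $\operatorname{Mat}_{n_1}(D_1) \times \cdots \times \operatorname{Mat}_{n_t}(D_t)$. For $(3) \Rightarrow (1)$, I would verify the two properties separately. Each $\operatorname{Mat}_{n_i}(D_i)$ is a finite-dimensional algebra over $D_i$, so its left ideals satisfy the descending chain condition, making it left Artinian; a finite product of left Artinian rings is left Artinian. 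Semisimplicity then follows by checking that $J(\operatorname{Mat}_n(D)) = 0$ (because $\operatorname{Mat}_n(D)$ is simple with identity, so its Jacobson radical, being a proper two-sided ideal, must vanish), and the Jacobson radical of a finite product of rings is the product of their Jacobson radicals.

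The main obstacle will be the step that extracts the internal direct sum decomposition $R = L_1 \oplus \cdots \oplus L_r$ and the subsequent identification of the isotypic components as two-sided ideals; this requires the interplay between the Artinian condition, which controls existence and finiteness of minimal left ideals, and the vanishing of the Jacobson radical, which ensures that $R$ itself, and not merely some quotient, splits as a sum of simple modules. The other two implications are essentially formal consequences of Schur's lemma and the behavior of $J(-)$ under finite products, respectively, and can be dispatched quickly once the central decomposition result is in hand.
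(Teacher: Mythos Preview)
The paper does not actually prove this theorem: it explicitly states that ``the omitted proofs are standard and can be found in most (noncommutative) algebra books,'' citing Farb--Dennis and Hungerford, and then simply records the statement for later use. So there is no proof in the paper against which to compare your proposal.

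That said, your outline is a correct sketch of the standard argument and is essentially the one found in the references the paper cites. The cycle $(1)\Rightarrow(2)\Rightarrow(3)\Rightarrow(1)$, the decomposition of $R$ as a direct sum of minimal left ideals via the Artinian condition and $J(R)=0$, the grouping into isotypic components shown to be two-sided ideals, and the identification of each simple factor with a matrix ring over a division ring via Schur's lemma --- this is exactly the textbook route. One small quibble: in your $(1)\Rightarrow(2)$ paragraph you say each $B_k$ ``is the endomorphism ring of a simple module,'' but what you actually want (and what the statement asserts) is that each simple factor $B_k$ is isomorphic to $\operatorname{End}_{D_k}(V_k)$ for $V_k$ a finite-dimensional $D_k$-vector space, where $D_k=\operatorname{End}_{B_k}(L)$ for a minimal left ideal $L\subseteq B_k$; this is Wedderburn's structure theorem for simple Artinian rings and deserves to be flagged as a separate ingredient rather than folded into the isotypic decomposition.
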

While in the general theory of Artin-Wedderburn division rings appear, we need not work with division rings as a result of the following theorem.
\begin{thm}[Wedderburn's Little Theorem] \label{WLT}
Every finite division ring $D$ is a field. 
\end{thm}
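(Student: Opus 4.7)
The plan is to give Witt's classical proof of Wedderburn's little theorem via the class equation combined with cyclotomic polynomials. Let $D$ be a finite division ring and write $Z = Z(D)$ for its center. Since $D$ is a division ring, $Z$ is a finite field, say of order $q$, and $D$ is a $Z$-vector space of some dimension $n$, so $|D|=q^n$. The goal is to show that $n=1$, which forces $D=Z$ to be a field.

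First I would write down the class equation for the conjugation action of $D^{*}=D\setminus\{0\}$ on itself. The center of the group $D^{*}$ is $Z^{*}$, and for each non-central $x\in D^{*}$ the centralizer $C_{D}(x)$ is a sub-division ring of $D$ containing $Z$. A standard dimension argument (viewing $D$ as a left vector space over $C_{D}(x)$) shows that $d_{x} := [C_{D}(x):Z]$ divides $n$ and is strictly less than $n$. The class equation then becomes
\[
q^{n}-1 \;=\; (q-1) \;+\; \sum_{x}\frac{q^{n}-1}{q^{d_{x}}-1},
\]
where the sum runs over representatives of the nontrivial conjugacy classes.

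Next I would bring in the $n$-th cyclotomic polynomial $\Phi_{n}(t)\in\mathbb{Z}[t]$. Using the factorization $t^{n}-1=\prod_{d\mid n}\Phi_{d}(t)$, one verifies that whenever $d$ is a proper divisor of $n$, $\Phi_{n}(q)$ divides $(q^{n}-1)/(q^{d}-1)$ in $\mathbb{Z}$, and clearly $\Phi_{n}(q)$ also divides $q^{n}-1$. Comparing with the class equation above therefore forces $\Phi_{n}(q)$ to divide $q-1$.

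The heart of the proof, and the main obstacle, is to show that this divisibility cannot hold when $n\geq 2$. Writing $\Phi_{n}(q)=\prod_{\zeta}(q-\zeta)$ over the primitive $n$-th roots of unity $\zeta$, the key inequality is $|q-\zeta|>q-1$ for every such $\zeta$, which follows by expanding
\[
|q-\zeta|^{2}=q^{2}-2q\operatorname{Re}(\zeta)+1>q^{2}-2q+1=(q-1)^{2},
\]
using $\operatorname{Re}(\zeta)<1$ since $\zeta\neq 1$ lies on the unit circle. Taking the product over all primitive $n$-th roots yields $|\Phi_{n}(q)|>q-1$, contradicting the divisibility. Hence $n=1$, and $D=Z$ is a field.
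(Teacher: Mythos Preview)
Your argument is the classical Witt proof via the class equation and cyclotomic polynomials, and it is correct. Note, however, that the paper does not actually supply its own proof of Wedderburn's little theorem: it is listed among the structural results from Artin--Wedderburn theory whose ``omitted proofs are standard'' and for which the reader is referred to \cite{Farb-Algebra} and \cite{Hungerford}. So there is no in-paper argument to compare against; you have filled in a gap the author deliberately left to the literature, and the proof you chose is exactly the standard one found in those references. One small cosmetic point: when you pass from $|q-\zeta|>q-1$ for each primitive $n$-th root $\zeta$ to $|\Phi_n(q)|>q-1$, you are implicitly using $q-1\geq 1$ so that a product of factors each strictly greater than $q-1$ is itself strictly greater than $q-1$; it is worth saying this explicitly.
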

We will also use Nakayama's lemma: 
\begin{thm}[Nakayama's Lemma] \label{Nakayama}
If $I_{1}$ is an ideal in a ring $R$ with identity, then the following conditions are equivalent:
\vspace{-0.3cm}
\begin{itemize}
	\item $I_{1}$ is contained in the Jacobson radical, $J$
	\vspace{-0.3cm}
	\item If $I_{2}$ is a finitely generated $R$-module such that $I_{1}I_{2}=I_{2}$, then $I_{2}=0$.
\end{itemize}
\end{thm}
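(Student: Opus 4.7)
The plan is to establish the two implications separately, using both characterizations of $J$ recalled earlier: namely, $J=\bigcap_{m\in\mathcal{M}} m$ where $\mathcal{M}$ is the set of maximal left ideals, and $x\in J$ if and only if $1+rxs$ is a unit in $R$ for every $r,s\in R$.

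For $(1)\Rightarrow(2)$, I would proceed by contradiction using a minimal generating set. Assume $I_1\subseteq J$ and suppose for contradiction that $I_2$ is a nonzero finitely generated $R$-module satisfying $I_1I_2=I_2$. Choose generators $x_1,\dots,x_n$ of $I_2$ with $n\geqslant 1$ minimal. Because $x_n\in I_2=I_1I_2$, we may write $x_n=\sum_{i=1}^n a_i x_i$ with $a_i\in I_1$, and rearranging yields
\[(1-a_n)x_n=\sum_{i<n} a_i x_i.\]
Applying the second characterization of $J$ with $r=-1$ and $s=1$ to $a_n\in I_1\subseteq J$ shows that $1-a_n$ is a unit in $R$. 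Multiplying through by $(1-a_n)^{-1}$ on the left expresses $x_n$ as an $R$-linear combination of $x_1,\dots,x_{n-1}$, contradicting the minimality of $n$ when $n\geqslant 2$; when $n=1$ the displayed equation directly forces $x_1=0$, so $I_2=0$.

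For $(2)\Rightarrow(1)$, I would argue the contrapositive. Assume $I_1\not\subseteq J$. By the first characterization there is a maximal left ideal $M$ with $I_1\not\subseteq M$. Set $I_2:=R/M$, a nonzero cyclic (hence finitely generated) left $R$-module. Pick any $a\in I_1\setminus M$; then $a+M$ is a nonzero element of the simple $R$-module $R/M$, so $R(a+M)=R/M$. Since $I_1$ is a left ideal, $r\cdot(a+M)=ra+M$ with $ra\in I_1$ for every $r\in R$, hence $R(a+M)\subseteq I_1I_2$. Combining these gives $I_1I_2=I_2$ while $I_2\neq 0$, contradicting hypothesis (2).

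The main obstacle is the $(1)\Rightarrow(2)$ direction: it requires the slightly unexpected trick of solving the equation $x_n=\sum a_i x_i$ by exploiting invertibility of $1-a_n$, which is precisely where the hypothesis $I_1\subseteq J$ enters and where finite generation is used (to guarantee the minimal generating set exists). The converse is essentially formal once one recalls that quotients of $R$ by maximal left ideals are simple modules, in which any nonzero element is a generator.
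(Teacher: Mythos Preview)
The paper does not prove this theorem; it is stated without proof as a standard result, with a pointer to \cite{Farb-Algebra} and \cite{Hungerford} for the omitted arguments. So there is no paper proof to compare against.

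Your argument is correct and is essentially the standard textbook proof. One small remark on the $(1)\Rightarrow(2)$ direction: when you pass from $x_n\in I_1I_2$ to an expression $x_n=\sum_i a_i x_i$ with $a_i\in I_1$, you are implicitly using that $I_1$ is a two-sided ideal (so that after expanding each element of $I_2$ in the generators, the coefficients $b_jr_{ji}$ land back in $I_1$). That hypothesis is indeed part of the statement in the paper, so this is fine, but it is worth making the dependence explicit since the module-theoretic version of Nakayama is often stated for left ideals contained in $J$, where the bookkeeping is slightly different. Otherwise both directions are clean.
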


\begin{cor} \label{Nak2}
If $R$ is a finite ring, then $J^l=0$ for some $l \in \mathbb{Z}_{+}$. 
\end{cor}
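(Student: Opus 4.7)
The plan is to combine the finiteness of $R$ with Nakayama's lemma in the form stated in Theorem~\ref{Nakayama}. First, I would observe that the powers of the Jacobson radical form a descending chain of two-sided ideals
\[ J \supseteq J^{2} \supseteq J^{3} \supseteq \cdots \]
inside the finite ring $R$. Since $R$ has only finitely many subsets, in particular only finitely many ideals, this chain cannot strictly descend forever, so there must exist a positive integer $l$ for which $J^{l} = J^{l+1}$.

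Next, I would apply Nakayama's lemma with $I_{1} = J$ and $I_{2} = J^{l}$. The ideal $I_{1} = J$ is trivially contained in the Jacobson radical (it is the Jacobson radical), satisfying the first bullet of Theorem~\ref{Nakayama}. The module $I_{2} = J^{l}$ is a left $R$-module, and because $R$ is finite, $J^{l}$ is a finite set, so it is certainly finitely generated as an $R$-module (for instance, one can take its underlying set as a generating set). The stabilisation equation from the previous paragraph rewrites as $I_{1} I_{2} = J \cdot J^{l} = J^{l+1} = J^{l} = I_{2}$, which is exactly the hypothesis of the second bullet. The implication in Nakayama's lemma therefore yields $I_{2} = 0$, i.e.\ $J^{l} = 0$, as required.

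I do not expect any serious obstacle here: the entire argument is a two-line invocation of finiteness (to force stabilisation of the chain of powers and to guarantee finite generation) followed by a direct application of Nakayama's lemma. The only point worth being slightly careful about is confirming that $J^{l}$ is finitely generated as an $R$-module — but since $|J^{l}| \leqslant |R| < \infty$, this is immediate and requires no work. No additional structure theory (Artin--Wedderburn, Wedderburn's little theorem, etc.) is needed for this particular corollary.
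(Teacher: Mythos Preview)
Your proposal is correct and follows essentially the same approach as the paper: use finiteness to stabilise the descending chain $J \supseteq J^{2} \supseteq \cdots$ at some $J^{l} = J^{l+1}$, then apply Nakayama's lemma with $I_{1} = J$ and $I_{2} = J^{l}$ to conclude $J^{l} = 0$. If anything, you are slightly more explicit than the paper in verifying the finite-generation hypothesis on $J^{l}$.
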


\begin{proof}	
Since $R$ is finite, the following chain has to stop for some $l \in \mathbb{Z}_{+}$
\[J \supseteq J^2 \supseteq J^3 \supseteq \cdots \supseteq J^l =J^{l+1} = J^{l+2} = \cdots.\] 
If $J^l=0$, we are done. Otherwise, let $I_{1}=J$, $I_{2}= J^{l}$  and apply Nakayama's lemma. 
\end{proof}

Actually we do not need $R$ to be a finite ring to have $J^l=0$ for some $l$, if $R$ is just Artinian the result still follows by the same proof. We record this version since we need this result later only when $R$ is finite.

\subsection{Hensel's Lemma in Special Cases}
In this section we present and prove Hensel's lemma for the Jacobson radical of finite commutative rings with identity and polynomial rings over finite fields. These results of course can be deduced from the general theory easily, but we provide self contained proofs. Our approach follows the standard proof of Hensel's lemma in other settings, which usually resemble the classical \emph{Newton approximation method}, e.g. see Hensel's lemma for the $p$-adic integers in \cite{Alain}.

\subsubsection{Hensel's Lemma for the Jacobson Radical} 
Let $R$ be a ring with identity and let $J$ denote the Jacobson radical of $R$. First, we record two general basic facts in the following lemmata, since we need them later. 

\begin{lem} \label{Small Fact} 
If $a+J$ is a left unit in $\bigslant{R}{J}$, then $a$ is a left unit in $R$ and this implies $a+J^i$ is a left unit in $\bigslant{R}{J^i}$ for any $i \geqslant 1$.
\end{lem}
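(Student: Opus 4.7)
The plan is to lift the left unit from $R/J$ to $R$ by writing down an explicit left inverse, using only the standard characterization of the Jacobson radical that was recalled just above the lemma: for any $x \in J$ and any $r,s \in R$, the element $1 + rxs$ is a unit of $R$.

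First I would unpack the hypothesis. Saying that $a+J$ is a left unit in $R/J$ means that there is some $b \in R$ with $(b+J)(a+J) = 1+J$, i.e.\ $ba = 1 - j$ for some $j \in J$. Specializing the characterization of $J$ with $r = -1$, $s = 1$ gives that $1 - j$ is a unit in $R$; denoting its two-sided inverse by $u$, the identity $(ub)a = u(ba) = u(1-j) = 1$ exhibits $ub$ as a left inverse of $a$. This proves that $a$ is a left unit in $R$.

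For the statement about $R/J^i$, rather than iterating the previous step I would write down a single closed-form left inverse via the telescoping identity. Set
\[
c \;=\; \bigl(1 + j + j^2 + \cdots + j^{i-1}\bigr)\, b.
\]
Since $j$ of course commutes with its own powers,
\[
ca \;=\; \bigl(1 + j + \cdots + j^{i-1}\bigr)(ba) \;=\; \bigl(1 + j + \cdots + j^{i-1}\bigr)(1 - j) \;=\; 1 - j^{\,i}.
\]
Because $J$ is a two-sided ideal, $J^i$ is well defined and $j^{\,i} \in J^i$, so $ca \equiv 1 \pmod{J^i}$. Hence $(c + J^i)(a + J^i) = 1 + J^i$, and $a + J^i$ is a left unit in $R/J^i$ as required.

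There is no real obstacle here; the only thing to be careful about is keeping track of sidedness (we are proving $a$ is a \emph{left} unit, so the left inverse of $1-j$ must be multiplied on the left of $ba$, giving a left inverse of $a$) and confirming that the element $j^{\,i}$ indeed lies in the two-sided ideal $J^i$. Both points are handled automatically because $J$ is two-sided and $1-j$ is a genuine (two-sided) unit.
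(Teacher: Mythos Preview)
Your proof is correct. The first step---writing $ba = 1-j$ with $j \in J$ and then using that $1-j$ is a unit to produce a left inverse of $a$ in $R$---is essentially what the paper does (the paper writes $ua = 1+j$ and multiplies on the left by a left inverse of $1+j$).

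The only difference is in the passage to $R/J^i$. The paper simply observes that the left inverse already found in $R$ descends: in your notation, $(ub)a = 1$ in $R$, so $(ub + J^i)(a + J^i) = 1 + J^i$ in $R/J^i$ for every $i$. This is the content of the phrase ``this implies'' in the lemma. Your telescoping construction of $c = (1 + j + \cdots + j^{i-1})b$ is correct and gives another left inverse modulo $J^i$, but it is unnecessary work: once $a$ has a genuine left inverse in $R$, that same element is a left inverse in every quotient. So your argument is sound but slightly more elaborate than needed for the second claim.
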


\begin{proof} 
If $a+J$ is a left unit in $\bigslant{R}{J}$, then there exists some $u \in R$ such that $(u+J)(a+J)=1+J$. That means there exists some $u \in R$ and $j \in J$ such that $ua=1+j$. This implies that there exists some $y \in R$ such that $yua=1$, which says $yu$ is a left inverse of $a$.\\
Also notice that $(yu+J^i)(a+J^i):=yua+J^i=1+J^i$ which means $a+J^i$ is a left unit in $\bigslant{R}{J^i}$.
\end{proof}

One can replace \emph{left} with \emph{right} or \emph{2-sided}, and both the statement and proof of the previous proposition extend \emph{mutatis mutandis}. As a side note also recall that the existence of a left inverse for an element of $R$ in general does not imply the existence of a right inverse, and vice versa. But finite rings are nicer in this regard, we can prove that if there exists a left (resp. right) inverse for $r \in R$, then the left (resp. right) inverse of $r$ is also the right (resp. left) inverse of $r$.

\begin{lem}[Taylor's expansion of a polynomial]\label{Taylor}
Let $R$ be a commutative ring and $p(x) \in R[x]$ be any polynomial. There exists a polynomial $p_2 \in R[x,y]$ such that $p(x+y)=p(x)+yp'(x)+y^2p_2(x,y).$
\end{lem}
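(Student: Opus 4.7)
The plan is to reduce to the case of monomials and then extend by $R$-linearity. First I would verify the identity for $p(x) = x^n$ using the binomial theorem (which is valid because $R$ is commutative):
\[
(x+y)^n \;=\; \sum_{k=0}^{n} \binom{n}{k} x^{n-k} y^{k} \;=\; x^n \;+\; n x^{n-1} y \;+\; y^{2}\sum_{k=2}^{n} \binom{n}{k} x^{n-k} y^{k-2}.
\]
Since the formal derivative of $x^n$ is $n x^{n-1}$, this matches the desired form with
\[
p_{2}(x,y) \;=\; \sum_{k=2}^{n} \binom{n}{k} x^{n-k} y^{k-2} \;\in\; R[x,y].
\]

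Next I would extend to an arbitrary polynomial $p(x) = \sum_{i=0}^{d} a_i x^{i}$ by additivity. For each $i$, apply the monomial case to obtain a polynomial $q_{2,i}(x,y) \in R[x,y]$ with
\[
(x+y)^{i} \;=\; x^{i} + i x^{i-1} y + y^{2} q_{2,i}(x,y).
\]
Multiplying by $a_i$ and summing over $i$ yields
\[
p(x+y) \;=\; \sum_{i} a_i (x+y)^{i} \;=\; p(x) + y\sum_{i} i\, a_i x^{i-1} + y^{2}\sum_{i} a_i\, q_{2,i}(x,y),
\]
so that $p_{2}(x,y) := \sum_{i} a_i\, q_{2,i}(x,y) \in R[x,y]$ works (and the middle sum is precisely $p'(x)$ by the formal definition of the derivative).

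This is essentially a mechanical computation, so there is no real obstacle; the only subtlety is ensuring commutativity of $R$ is actually used, namely to guarantee that the binomial theorem holds in $R[x,y]$ (otherwise cross-terms like $xy$ and $yx$ would not collapse and the clean expansion would fail). An alternative, slightly slicker route would be to view $p(x+y)$ as an element of $R[x][y]$ and perform polynomial division by $y^{2}$: the remainder has $y$-degree at most $1$, and setting $y=0$ and differentiating in $y$ at $y=0$ identifies the constant and linear coefficients as $p(x)$ and $p'(x)$ respectively. Either approach produces the required $p_{2}$ explicitly, and no nontrivial machinery is needed.
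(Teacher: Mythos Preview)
Your proof is correct and takes essentially the same approach as the paper: expand $(x+y)^n$ via the binomial theorem, separate the first two terms, and then extend by $R$-linearity to an arbitrary $p(x)=\sum a_n x^n$. The paper's version is simply a one-line compression of what you wrote.
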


\begin{proof}
Let $p(x)=\sum a_{n}x^{n}$. Then, \[ p(x+y)=\sum a_{n}(x+y)^{n}=\sum a_{n}(x^n+nx^{n-1}y+y^2(\cdots))=\sum a_{n}x^n+y\sum na_{n}x^{n-1}+y^2p_2(x,y).\] 
\end{proof}

\begin{prop} \label{Hensel-Jacobson} 
Let $R$ be a commutative finite ring with identity. Let $p(x) \in R[x]$ where $R[x]$ stands for the polynomial ring over $R$. Assume there exists a root of $p(x)$ in $\bigslant{R}{J}$, say $a_0 +J$ for some $a_0 \in R$, i.e. $p(a_0+J) \equiv 0 \bmod J$. If $p'(a_0)+J$ is a unit in $\bigslant{R}{J}$, then $r=a_0- \dfrac{p(a_0)}{p'(a_0)}$ satisfies
\vspace{-0.3cm}
\begin{enumerate}
\item $p(r)  \equiv 0 \pmod{J^2}$
\vspace{-0.3cm}
\item $r=a_0 \pmod{J}$
\vspace{-0.3cm}
\item $p'(a_0)$ is a unit in $R$ and $p'(r)$ is a unit in $R$. 
\end{enumerate}
\end{prop}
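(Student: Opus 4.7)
The plan is the standard Newton-iteration proof of Hensel's lemma, carried out in the Jacobson-radical setting. The first thing to do is to make sense of the expression $r = a_0 - p(a_0)/p'(a_0)$ in $R$, which forces us to handle part of (3) before (1) or (2). By hypothesis, $p'(a_0)+J$ is a unit in $R/J$; Lemma \ref{Small Fact} then promotes this to the statement that $p'(a_0)$ is already a unit in $R$, so $p'(a_0)^{-1}$ exists and $r := a_0 - p(a_0)\,p'(a_0)^{-1}$ is a well-defined element of $R$. Moreover, $p(a_0 + J) \equiv 0 \pmod{J}$ just says $p(a_0) \in J$, and since $J$ is a two-sided ideal, $p(a_0)\,p'(a_0)^{-1} \in J$, giving $r - a_0 \in J$; this is (2).

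For (1), I would apply Taylor's expansion (Lemma \ref{Taylor}) with $y := r - a_0 = -p(a_0)\,p'(a_0)^{-1}$:
\[
p(r) \;=\; p(a_0 + y) \;=\; p(a_0) + y\,p'(a_0) + y^2\, p_2(a_0,y).
\]
The choice of $y$ is designed precisely so that $y\,p'(a_0) = -p(a_0)$, so the first two terms cancel and $p(r) = y^2\,p_2(a_0,y)$. Since $y \in J$, we get $y^2 \in J^2$, and as $J^2$ is an ideal, $p(r) \in J^2$, which is exactly (1).

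For the remaining half of (3), I would use the following elementary fact, provable by applying Lemma \ref{Taylor} to $p'$ (or directly by the identity $q(a) - q(b) = (a-b)\,\widetilde{q}(a,b)$ for any $q \in R[x]$): if $a \equiv b \pmod{J}$, then $q(a) \equiv q(b) \pmod{J}$ for every $q \in R[x]$. Taking $q = p'$, $a = r$, $b = a_0$, part (2) gives $p'(r) \equiv p'(a_0) \pmod{J}$, so $p'(r) + J$ equals the unit $p'(a_0)+J$ in $R/J$; Lemma \ref{Small Fact} then lifts this to say $p'(r)$ is a unit in $R$, completing (3). I do not foresee a substantial obstacle here — the only things to be careful about are invoking Lemma \ref{Small Fact} at the right two places, and remembering that $J$ being two-sided is what lets us freely multiply by $p'(a_0)^{-1}$ without leaving $J$.
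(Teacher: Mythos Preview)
Your proposal is correct and follows essentially the same approach as the paper: both invoke Lemma~\ref{Small Fact} to make $p'(a_0)^{-1}$ exist in $R$, then apply the Taylor expansion of Lemma~\ref{Taylor} to see that the Newton step $y=-p(a_0)p'(a_0)^{-1}$ lands in $J$ and kills the linear term, leaving $p(r)\in J^2$. Your write-up is in fact more explicit than the paper's on the second half of (3)---the paper simply asserts ``the claims follow'' after identifying $j$, whereas you spell out that $r\equiv a_0\pmod J$ forces $p'(r)\equiv p'(a_0)\pmod J$ and then appeal to Lemma~\ref{Small Fact} once more.
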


\begin{proof} 
Notice that the existence of $\dfrac{1}{p'(a_0)}$ follows from Lemma~\ref{Small Fact} and from the assumptions. Suppose we have a root of $p(x)$ in $\bigslant{R}{J^2}$, let's denote it with $r+J^2$. If we send $r+J^2$ using the canonical reduction map from $\bigslant{R}{J^2}$ to $\bigslant{R}{J}$, the image of $r+J^2$ should also be a root i.e. \[p(r+J^2) \equiv 0 \pmod{J^2} \ \ \implies \ \ p(r+J) \equiv 0 \pmod{J}. \]
If $r+J^2$ is a lift of $a_0+J$ then we should have $r+J=a_0+J$ in $\bigslant{R}{J}$ i.e. $r=a_0+j$ for some $j \in J$. By Lemma~\ref{Taylor}, we have 
\[ p(r)= p(a_0+j)=p(a_0) +p'(a_0) j + j^2 b \]
for some $b \in R$. We need $p(a_0+j) \equiv 0 \pmod{J^2}$ that means we need 
\[p(a_0)+p'(a_0).j \equiv 0 \pmod{J^2}. \]
So, if we let $j=-\dfrac{p(a_0)}{p'(a_0)}$ from the beginning, everything works out in these equations and the claims follow. 
\end{proof}
If $a_0+J$ is a root of $p(x)$ in $\bigslant{R}{J}$ and $p'(a_0)$ is a unit in $R$, then we can upgrade $a_0+J$ to a root $r+J^2$ in $\bigslant{R}{J^2}$ by the proposition. But notice that using the exact same process, we can lift $r+J^2$ in $\bigslant{R}{J^2}$ to some root $r_1+J^3$ in $\bigslant{R}{J^3}$ and so on. This process gives us a solution in each $\bigslant{R}{J^i}$ for any $i \geqslant 1$, and combined with Corollary~\ref{Nak2}, we obtain the following corollaries.

\begin{cor}
Let $R$ be a finite commutative ring with identity and $p(x) \in R[x]$. If there exists a root of $p(x)$, say $a_0+J$, in $\bigslant{R}{J}$ and $p'(a_0)+J$ is a unit in $\bigslant{R}{J}$, then there exists a root of $p(x)$ in $R$. 
\end{cor}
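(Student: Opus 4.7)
The plan is to iterate Proposition~\ref{Hensel-Jacobson}, using the fact that the Jacobson radical of a finite ring is nilpotent. By Corollary~\ref{Nak2} there exists $l \in \mathbb{Z}_+$ with $J^l = 0$, so it suffices to construct an element $r \in R$ satisfying $p(r) \in J^l$, since then $p(r) = 0$ and $r$ is a genuine root in $R$.

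Starting from $a_0$, a single application of the proposition produces $a_1 = a_0 - p(a_0)/p'(a_0)$ with $p(a_1) \in J^2$, $a_1 \equiv a_0 \pmod{J}$, and $p'(a_1)$ a unit in $R$. I would then iterate the same Newton step: assuming inductively that $a_i \in R$ satisfies $p(a_i) \in J^{2^i}$, $a_i \equiv a_0 \pmod{J}$, and $p'(a_i)$ is a unit in $R$, set $j_i = -p(a_i)/p'(a_i) \in J^{2^i}$ and $a_{i+1} = a_i + j_i$. A direct application of Lemma~\ref{Taylor} at $a_i$ then gives
\[ p(a_{i+1}) = p(a_i) + p'(a_i) j_i + j_i^2 b = j_i^2 b \in J^{2^{i+1}} \]
for some $b \in R$, so the Newton step doubles the $J$-exponent at each stage.

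The one thing that needs checking is that the hypotheses of Proposition~\ref{Hensel-Jacobson} propagate: since $a_{i+1} - a_i = j_i \in J$, induction gives $a_i \equiv a_0 \pmod{J}$ for every $i$, hence $p'(a_i) + J = p'(a_0) + J$ remains a unit in $\bigslant{R}{J}$; Lemma~\ref{Small Fact} (combined with the remark that one-sided invertibility implies two-sided invertibility in a finite ring) then upgrades this to $p'(a_i)$ being a unit in $R$, so the next Newton iterate is well-defined. Choosing $i$ with $2^i \geqslant l$ yields $p(a_i) \in J^{2^i} \subseteq J^l = 0$, producing the required root.

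The main obstacle here is really only the bookkeeping for propagating the invertibility hypothesis on $p'$; the analytic content — that each Newton step improves the approximation quadratically in powers of $J$ — is an immediate consequence of Lemma~\ref{Taylor} applied in the same way as in the proof of Proposition~\ref{Hensel-Jacobson}. Termination is guaranteed by the nilpotence supplied by Corollary~\ref{Nak2}, which is where finiteness of $R$ enters.
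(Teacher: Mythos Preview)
Your argument is correct and follows the same approach as the paper: iterate the Newton step of Proposition~\ref{Hensel-Jacobson} and use nilpotence of $J$ from Corollary~\ref{Nak2} to terminate. The only (harmless) difference is that you track the quadratic improvement $p(a_i)\in J^{2^i}$ explicitly, whereas the paper's sketch just says ``the exact same process'' lifts a root from $R/J^i$ to $R/J^{i+1}$ one step at a time; either bookkeeping reaches $J^l=0$ in finitely many iterations.
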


\begin{cor}\label{Hensel2}
Let $R$ be a finite commutative ring with identity. Let $\alpha \in R$. If $\alpha \equiv B_1^k + \cdots+ B_m^k$ $\bmod \ J$ in $\bigslant{R}{J}$ for some $B_1,B_2,\cdots,B_m \in R$ and $k B_1^{k-1}$ is a unit in $R$, then there exists some $\beta$ in $R$ such that $\alpha=\beta^k +B_2^k+\cdots+ B_m^k$ holds.
\end{cor}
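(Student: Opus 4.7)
The plan is to apply the preceding corollary to a carefully chosen polynomial. Define
\[ p(x) \;=\; x^k \;-\; \bigl(\alpha - B_2^k - \cdots - B_m^k\bigr) \;\in\; R[x]. \]
The hypothesis $\alpha \equiv B_1^k + B_2^k + \cdots + B_m^k \pmod{J}$ is precisely the statement that $p(B_1) \equiv 0 \pmod{J}$, so $B_1 + J$ is a root of $p(x)$ in $R/J$.

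Next I would check the derivative hypothesis. Since $p'(x) = k x^{k-1}$, we have $p'(B_1) = k B_1^{k-1}$, which is a unit in $R$ by assumption. The natural projection $R \twoheadrightarrow R/J$ sends units to units, so $p'(B_1) + J$ is a unit in $R/J$, and the hypotheses of the previous corollary (the version that lifts a root from $R/J$ all the way to $R$) are satisfied.

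Applying that corollary produces some $\beta \in R$ with $p(\beta) = 0$, that is,
\[ \beta^k = \alpha - B_2^k - \cdots - B_m^k, \]
which rearranges to $\alpha = \beta^k + B_2^k + \cdots + B_m^k$, as required.

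I do not expect any genuine obstacle here, since the statement is essentially a repackaging of the previous corollary specialized to the polynomial $x^k - c$; the only subtlety worth flagging is the easy fact that the canonical map $R \to R/J$ preserves the unit condition needed on $p'(B_1)$, which is what lets the Hensel-type lift in the preceding corollary engage.
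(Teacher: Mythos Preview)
Your proof is correct and follows exactly the same approach as the paper: define the polynomial $p(x)=x^k+B_2^k+\cdots+B_m^k-\alpha$ (equivalently your $x^k-(\alpha-B_2^k-\cdots-B_m^k)$) and apply the preceding corollary. Your version simply spells out the verification of the hypotheses more explicitly than the one-line proof in the paper.
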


\begin{proof}
Let $p(x)= x^k+B_2^k + \cdots+ B_m^k-\alpha$ and apply the previous corollary.
\end{proof}

\subsubsection{Hensel's Lemma for Polynomial Rings} 

Let $Q(t)$ be a polynomial in $\Bbb F_q[x][t]=\Bbb F_q[x, t]$. We can write $Q(t)$ as $a_0+a_1t+a_2t^2+ \cdots + a_nt^n$ for some $a_0, a_1,  \cdots a_n \in \Bbb F_q[x]$. This means we may evaluate $Q(t)$ at any $t \in \Bbb F_q[x]$ and get outputs in $\Bbb F_q[x]$. 

Let $f(x)$ be an irreducible polynomial in $\Bbb F_q[x]$. Let $g$ be any polynomial in $\Bbb F_q[x]$. We can define $f$-\emph{adic valuation of} $g$, say $\nu(g)$, such as
\[ \nu (g)=m \iff g \equiv 0 \pmod{f^m} \ \  \text{ and } \ \ g \not\equiv 0 \pmod{f^{m+1}} \ \ \text{ for some } m \in \mathbb{Z}_{+}.\] 
We also set $\nu(g)=0$ if $f \nmid g$. 

\begin{prop} \label{Hensel for poly-weak version} 
Let $g \in \Bbb F_q[x]$ satisfy $Q(g) \equiv 0 \pmod{f^{n}}$ for some $n \in \mathbb{Z}_{+}$. Suppose $m$ is a positive integer less than or equal to $n$. If $Q'(g) \not\equiv 0 \pmod{f}$, there exists a $g_{2} \in \Bbb F_q[x]$ such that $Q(g_{2}) \equiv 0 \pmod{f^{n+m}}$ and $g \equiv g_{2} \pmod{f^{n}}$. Moreover, $g_{2}$ is unique $\bmod{f^{n+m}}$.
\end{prop}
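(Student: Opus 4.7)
The plan is to run a standard Newton-iteration / Hensel lift in the $f$-adic setting, closely paralleling the proof of Proposition~\ref{Hensel-Jacobson}. The central algebraic fact I will use is that since $f$ is irreducible, the quotient $\Bbb F_q[x]/(f^k)$ is a local ring whose maximal ideal is generated by the image of $f$; hence any element not divisible by $f$ is a unit modulo $f^k$. In particular, the hypothesis $Q'(g) \not\equiv 0 \pmod{f}$ gives that $Q'(g)$ is invertible modulo $f^k$ for every $k \geqslant 1$.

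First I would search for $g_2$ in the form $g_2 = g + f^n d$ with $d \in \Bbb F_q[x]$ to be determined; this automatically guarantees $g_2 \equiv g \pmod{f^n}$. Writing $Q(g) = f^n c$ and applying Lemma~\ref{Taylor} to $Q(t) \in \Bbb F_q[x][t]$ with increment $h = f^n d$, one obtains
\[
Q(g_2) \;=\; Q(g) + f^n d\, Q'(g) + f^{2n} d^2\, Q_2(g, f^n d) \;=\; f^n\!\bigl(c + d\, Q'(g)\bigr) + f^{2n}(\cdots).
\]
Since $m \leqslant n$ implies $n + m \leqslant 2n$, it is enough to solve the congruence $c + d\, Q'(g) \equiv 0 \pmod{f^m}$. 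By the local-ring remark above, $Q'(g)$ has an inverse modulo $f^m$, so we simply set $d \equiv -c/Q'(g) \pmod{f^m}$ and lift $d$ arbitrarily back to $\Bbb F_q[x]$. The resulting $g_2 = g + f^n d$ satisfies $Q(g_2) \equiv 0 \pmod{f^{n+m}}$ and $g_2 \equiv g \pmod{f^n}$.

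For the uniqueness statement, suppose $g_2'$ is another solution; then $g_2' \equiv g_2 \pmod{f^n}$, so write $g_2' = g_2 + f^n e$. Taylor-expanding once more gives
\[
0 \;\equiv\; Q(g_2') - Q(g_2) \;=\; f^n e\, Q'(g_2) + f^{2n} e^2(\cdots) \pmod{f^{n+m}},
\]
so $e\, Q'(g_2) \equiv 0 \pmod{f^m}$. Because $Q'(g_2) - Q'(g)$ is divisible by $g_2 - g$ (any polynomial $P$ satisfies $P(u) - P(v) = (u-v)\tilde P(u,v)$), we have $Q'(g_2) \equiv Q'(g) \pmod{f^n}$, so in particular $Q'(g_2) \not\equiv 0 \pmod{f}$ and is a unit modulo $f^m$. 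Hence $e \equiv 0 \pmod{f^m}$, giving $g_2' \equiv g_2 \pmod{f^{n+m}}$.

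I do not anticipate any real obstacle: the only step that needs a brief justification beyond Lemma~\ref{Taylor} is the passage from ``$Q'(g) \not\equiv 0 \pmod{f}$'' to ``$Q'(g)$ is a unit modulo $f^k$,'' which follows immediately from the locality of $\Bbb F_q[x]/(f^k)$ when $f$ is irreducible (equivalently, from the extended Euclidean algorithm producing an inverse of $Q'(g)$ modulo $f^k$ whenever $\gcd(Q'(g), f^k) = 1$).
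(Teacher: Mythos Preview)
Your proposal is correct and follows essentially the same Newton--Hensel argument as the paper: both search for $g_2=g+f^n t$, apply Lemma~\ref{Taylor}, use $m\leqslant n$ to kill the quadratic term modulo $f^{n+m}$, and then invert $Q'(g)$ modulo $f^m$ (the paper via B\'ezout, you via locality of $\Bbb F_q[x]/(f^m)$, which amounts to the same thing). Your uniqueness paragraph is a bit more explicit than the paper's, which leaves uniqueness implicit in the ``if and only if'' chain determining $t$ modulo $f^m$.
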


We will refer $g_{2}$ as a \emph{lifting} of $g$ in $\bmod{f^{n+m}}$.

\begin{remark}
We should note that the restriction $m \leqslant n$ in this proposition is not that important. Since if we want to find a solution modulo $f^{j}$ for some $j > n$, we can apply the proposition $j-n$ many times recursively for $m=1$ and lift a solution to modulo $f^{j}$. The same idea also applies to the next proposition. 
\end{remark}
\begin{proof} 
By setting $Q(g+f^{n}t) \equiv 0 \pmod{f^{n+m}}$, we can solve for $t$. By applying Lemma~\ref{Taylor} on $Q$ we know that 
\begin{align*} Q(g+f^{n}t) &= Q(g)+f^{n}t Q'(g)+(f^{n}t)^2 \tilde{Q}(g,f^nt) \ \ \ \ \ \text{ for some polynomial } \tilde{Q} \\
&\equiv Q(g)+f^{n}t Q'(g) \pmod{f^{n+m}} \ \ \ \ \ \text{since } n \geqslant m.
\end{align*}
By assumption $Q(g) \equiv 0 \pmod{f^n}$ i.e. $Q(g)= f^nh$ for some $h \in \Bbb F_q[x]$. So, we should have
\begin{align*}Q(g+f^{n}t) &\equiv f^nh+f^ntQ'(g) \pmod{f^{n+m}} \\
&\equiv f^n(h+tQ'(g))  \pmod{f^{n+m}}.
\end{align*}
Moreover, 
\begin{align*} Q(g+f^{n}t) \equiv 0 \pmod{f^{n+m}} &\iff f^n(h+tQ'(g))=f^{n+m}v \ \ \ \ \text{ for some } v \in \Bbb F_q[x]\\
&\iff f^n(h+tQ'(g)-f^{m}v)=0 \ \ \ \ \text{ for some } v \in \Bbb F_q[x].
\end{align*}
To have this, since $\Bbb F_q[x]$ is an integral domain, we need $h+tQ'(g)-f^{m}v$ to be zero, that means we need $h+tQ'(g)=f^{m}v$ which is the same with $t \equiv \dfrac{-h}{Q'(g)} \pmod{f^{m}}$. So, if we define $g_{2}=g-f^n \dfrac{h}{Q'(g)}=g- \dfrac{Q(g)}{Q'(g)}$ from the beginning, we are all set. 

The only point we did not justify here is why $Q'(g)^{-1}$ makes sense in $\bmod{f^m}$. Remember $Q'(g) \in \Bbb F_q[x]$. $\Bbb F_q[x]$ is a PID, that means we have B\'ezout's identity in $\Bbb F_q[x]$. By assumption we have $Q'(g) \not \equiv 0 \pmod{f}$, which means $f$ doesn't divide $Q'(g)$. Since $f$ is irreducible, the only divisors of $f$ are $1$ and itself. So, $\gcd(Q'(g),f)=1$ and this implies $\gcd(Q'(g),f^m)=1$. By B\'ezout, we have $AQ'(g)+Bf^m=1$ for some $A,B \in \Bbb F_q[x]$ which shows $Q'(g)^{-1}$ exits in $\bmod{f^m}$ for any $m \geqslant 1$.
\end{proof}

\begin{prop} \label{Hensel for poly-strong version} 
Let $g \in \Bbb F_q[x]$ be such that $Q(g) \equiv 0 \pmod{f^n}$ for some $n \in \mathbb{Z}_{+}$, and let $m$ denote $\nu (Q'(g))$. If $n > 2m$, then $g_{2}=g-\dfrac{Q(g)}{Q'(g)}$ satisfies
\vspace{-0.3cm}
\begin{enumerate}
\item \label{item1} $Q(g_{2}) \equiv 0 \pmod{f^{n+1}}$
\vspace{-0.3cm}
\item \label{item2} $g_{2} \equiv g \pmod{f^{n-m}}$
\vspace{-0.3cm}
\item \label{item3} $\nu (Q'(g_{2}))= \nu (Q'(g))=m$.
\end{enumerate}
\end{prop}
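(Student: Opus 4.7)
My plan is a direct Newton--Hensel iteration: the task is first to make the formula $g_2 = g - Q(g)/Q'(g)$ into an honest element of $\Bbb F_q[x]$, and then to verify the three assertions via Lemma~\ref{Taylor} together with bookkeeping on $f$-adic valuations. Write $Q(g) = f^n a$ and $Q'(g) = f^m b$ with $a, b \in \Bbb F_q[x]$ and $\gcd(b, f) = 1$ (the latter by the definition of $m = \nu(Q'(g))$; note that we do \emph{not} get $\gcd(a, f) = 1$ for free). Since $\Bbb F_q[x]$ is a PID and $\gcd(b, f^{n+1}) = 1$, B\'ezout's identity --- used exactly as in the proof of Proposition~\ref{Hensel for poly-weak version} --- supplies $c \in \Bbb F_q[x]$ with $bc \equiv 1 \pmod{f^{n+1}}$. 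I would then define $u := -f^{n-m} a c$ and $g_2 := g + u$. The hypothesis $n > 2m$ gives $n - m \geqslant m + 1 \geqslant 1$, so $u$ is a bona fide polynomial in $\Bbb F_q[x]$, and by construction $u \, Q'(g) = -f^n a (bc) \equiv -Q(g) \pmod{f^{n+1}}$.

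With $u$ in hand, assertion \eqref{item2} is immediate, since $\nu(u) \geqslant n - m$ forces $g_2 \equiv g \pmod{f^{n-m}}$. For \eqref{item1}, I would invoke Lemma~\ref{Taylor} to produce $\widetilde Q \in \Bbb F_q[x,y]$ with
\[ Q(g_2) = Q(g + u) = Q(g) + u Q'(g) + u^2 \widetilde Q(g, u), \]
observe that the sum of the first two terms lies in $(f^{n+1})$ by the choice of $u$, and note that $\nu(u^2) \geqslant 2(n-m) \geqslant n+1$ precisely because $n > 2m$. Hence $Q(g_2) \equiv 0 \pmod{f^{n+1}}$.

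For \eqref{item3}, Lemma~\ref{Taylor} applied to $Q'$ yields $R \in \Bbb F_q[x,y]$ with $Q'(g_2) = Q'(g) + u \, R(g, u)$; here $\nu(Q'(g)) = m$ while $\nu(u \, R(g, u)) \geqslant \nu(u) \geqslant n - m > m$ (again using $n > 2m$), so the two summands have distinct $f$-adic valuations and the valuation of the sum is the smaller one, giving $\nu(Q'(g_2)) = m$. The only genuine subtlety is the opening step, making sense of $Q(g)/Q'(g)$ as an honest polynomial when $Q'(g)$ is not a unit modulo $f$; once that is dealt with by peeling off the $f^m$ factor and inverting the coprime part via B\'ezout, the hypothesis $n > 2m$ has a transparent double role: it is used once (as $2(n-m) \geqslant n+1$) to push the quadratic error term below $f^{n+1}$ in \eqref{item1}, and once (as $n - m > m$) to separate the valuations of the two summands in \eqref{item3}.
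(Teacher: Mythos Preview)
Your proof is correct and follows essentially the same Newton--Hensel strategy as the paper: factor $Q(g)=f^n a$ and $Q'(g)=f^m b$ with $b$ coprime to $f$, define the increment $u=-f^{n-m}ac$ via a B\'ezout inverse of $b$, and verify the three claims by Taylor expansion and valuation bookkeeping. The paper's proof is marginally less formal about what $Q(g)/Q'(g)$ means (it simply writes $h_2^{-1}$ as an inverse modulo arbitrary powers of $f$) and for item~\eqref{item3} it expands the Taylor series of $Q'$ one step further, factoring out $f^m$ explicitly; your ultrametric argument that $\nu(Q'(g))=m<n-m\leqslant\nu(uR(g,u))$ forces $\nu(Q'(g_2))=m$ is a cleaner way to reach the same conclusion.
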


\begin{proof} 
Since $Q(g) \equiv 0 \pmod{f^n}$, there exists an $h_1 \in \Bbb F_q[x]$ such that $Q(g)=f^{n} h_1$. 
Similarly since $\nu (Q'(g))=m$, there exists an $h_2 \in \Bbb F_q[x] $ such that $Q'(g)=f^{m}h_2$ and $h_2 \not\equiv 0 \pmod{f}$, which implies $h_2$ is a unit in $\bmod{f^i}$ for any $i$.

To prove \eqref{item1} by applying Lemma~\ref{Taylor} on $Q$ we have 
\begin{align*} Q(g_2) &= Q\left(g-\frac{Q(g)}{Q'(g)}\right) \\
&= Q(g)+\left[\frac{-Q(g)}{Q'(g)}\right]Q'(g)+\left[\frac{Q(g)}{Q'(g)}\right]^2 \hat{Q}(\cdots) \ \ \ \ \ \text{ for some polynomial } \hat{Q} \\
&= (f^{n-m} h_1h_2^{-1})^2 \  \hat{Q}(\cdots) \\
&\equiv 0 \pmod{f^{n+1}}  \ \ \ \ \ \text{since } n > 2m.
\end{align*}

\eqref{item2} follows from 
\[g_{2} - g = \frac{-Q(g)}{Q'(g)} = \frac{-f^{n}h_1}{f^{m}h_2} = -f^{n-m}h_1h_2^{-1} \equiv 0 \pmod{f^{n-m}}.\]

To prove \eqref{item3} by applying Lemma~\ref{Taylor} on $Q'$ we have
\begin{align*} Q'(g_2) &= Q' \big( g+(g_2-g) \big) \\
&= Q'(g)+(g_2-g)Q''(g)+(g_2-g)^2 \check{Q}(\cdots) \ \ \ \ \ \text{ for some polynomial } \check{Q} \\
&= Q'(g) - \bigg[ \frac{Q(g)}{Q'(g)} \bigg] Q''(g) + \bigg[ \frac{Q(g)}{Q'(g)} \bigg]^2 \check{Q}(\cdots) \\
&= f^{m}h_2-f^{n-m}h_1h_2^{-1} Q''(g) + (f^{n-m}h_1h_2^{-1})^2  \check{Q}(\cdots) \\
&=f^m \big[ h_2- f^{n-2m}h_1h_2^{-1} Q''(g) + f^{2n-3m} (h_1h_2^{-1})^2 \check{Q}(\cdots)\big] \\ 
&\equiv 0 \pmod{f^{m}}  \ \ \ \ \ \text{since } n > 2m.
\end{align*}
Since $\big[h_2- f^{n-2m}h_1h_2^{-1} Q''(g) + f^{2n-3m} (h_1h_2^{-1})^2 \check{Q}(\cdots)\big] \not\equiv 0\pmod{f}$ we have $\nu \big( Q'(g_2) \big)=m$.
\end{proof}

\section{Waring's Problem in Finite Fields}\label{fields section}

Let $\Bbb F_q$ be the finite field with $q=p^s$ elements where $p$ is the characteristic of $\Bbb F_q$. We pick a $k \in \mathbb{Z}_{+}$ and denote the set of all $k^{\text{\tiny th}}$ powers in $\Bbb F_q$ by $R_k$ i.e. $R_k= \{ \alpha^k  \mid  \alpha \in \Bbb F_q \}$. Let $\gamma(k, q)$ denote the smallest $m$ such that every element of $\Bbb F_q$ can be written as a sum of $m$ many $k^{\text{\tiny th}}$ powers in $\Bbb F_q$. We are interested in finding $\gamma(k, q)$ for different $k$ and $q$ values, but unfortunately $\gamma(k, q)$ does not always exist. For a fixed value of $k$, if every element of $\Bbb F_q$ can be written as a sum of $k^{\text{\tiny th}}$ powers, we say $\Bbb F_q$ is \emph{coverable} by the $k^{\text{\tiny th}}$ powers. Otherwise, we call it an \emph{uncoverable field} for that specific $k$. 

Since $\Bbb F_2$ and $\Bbb F_3$ are coverable for any $k$, the smallest example of an uncoverable field is $\Bbb F_4$ with $k=3$. The only cubes in $\Bbb F_4$ are $\{0,1\}$ and any linear combination of $0$ and $1$ is either $0$ or $1$, this shows if $\alpha$ is a primitive cube root of unity in $\Bbb F_4$, then it cannot be written as a sum of cubes. Therefore, we say $\Bbb F_4$ is an uncoverable field for $k=3$ or cubes. This example is a very simple observation, but it actually illustrates the point very well which is if $R_k$ lies in a proper subfield of $\Bbb F_q$, then we cannot cover $\Bbb F_q$ with $k^{\text{\tiny th}}$ powers. Furthermore, the converse also holds meaning $\Bbb F_q$ is coverable if and only if the set of $k^{\text{\tiny th}}$ powers does not lie in a proper subfield, see \cite{Winterhof1}.

Before we proceed, notice that $\gamma(1, q)$ is trivially $1$. In the case of $k=2$, if $\Bbb F_q$ is a field with characteristic $2$, we have the Frobenius automorphism which implies every element of $\Bbb F_q$ is a square so that $\gamma(2, q)=1$. If $\operatorname{char}\Bbb F_q \neq 2$, then exactly half of the elements in $\Bbb F_q^{\ast}$ are squares and $\gamma(2, q)=2$ follows by the \emph{pigeonhole principle}. 

If we look at the $k=3$ case, then one can prove that except for $q=4,7$ in every finite field every element can be written as a sum of at most two cubes, see \cite{RoyBarb}. $\Bbb F_7$ requires three cubes since the only cubes in $\Bbb F_7$ are $\{0,-1,+1\}$, and thus $3$ is not the sum of any two cubes. $\Bbb F_4$ is uncoverable as we discussed above.

We can also easily single out a nice bound for $\gamma(k, q)$ when it exists, and also the extreme case i.e. when $\gamma(k, q)=1$:
\begin{prop} \label{gamma1 case}
Let $\Bbb F_q$ be the finite field with order $q$ and let $k$ be a positive integer. If $\Bbb F_q$ is coverable with the $k^{\text{\tiny th}}$ powers, then $\Bbb F_q \subseteq d R_k$ where $d=\gcd(k,q-1)$. In particular if $\gamma(k, q)$ exists, then it is less than or equal to $k$. Moreover, $\Bbb F_q=R_k$ if and only if $\gcd(k, q-1)=1$. 
\end{prop}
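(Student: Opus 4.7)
The plan is to exploit the group structure of $R_k$. Since $\Bbb F_q^*$ is cyclic of order $q-1$, the $k$-th power map is a group endomorphism whose image $R_k^* := R_k \setminus \{0\}$ is the unique subgroup of $\Bbb F_q^*$ of index $d = \gcd(k, q-1)$; in particular $\Bbb F_q^*$ decomposes into exactly $d$ cosets of $R_k^*$. The final assertion of the proposition drops out immediately: $R_k = \Bbb F_q$ forces $|R_k^*| = q - 1$, that is $d = 1$, and conversely if $d = 1$ then $R_k^* = \Bbb F_q^*$, whence $R_k = \Bbb F_q$ (using $0 = 0^k \in R_k$).

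For the main claim I would work with the iterated sumsets $A_m := R_k + R_k + \cdots + R_k$ ($m$ summands) and show that $A_d = \Bbb F_q$ under the coverability hypothesis. The crucial observation is that each $A_m$ is stable under multiplication by $R_k^*$: if $s^k \in R_k^*$ and $\alpha = r_1^k + \cdots + r_m^k \in A_m$, then $s^k \alpha = (s r_1)^k + \cdots + (s r_m)^k \in A_m$. Since $0 \in A_m$ for every $m \geq 1$, this forces $A_m$ to be the disjoint union of $\{0\}$ with some collection of cosets of $R_k^*$ in $\Bbb F_q^*$.

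Now I would track $c(m)$, the number of $R_k^*$-cosets contained in $A_m$, starting from $c(1) = 1$ (since $A_1 = R_k = \{0\} \cup R_k^*$) and bounded above by $c(m) \leq d$. The key claim is that $c(m) < d$ implies $c(m+1) > c(m)$: if instead $c(m+1) = c(m)$, then $A_{m+1} = A_m$ and so $A_m + R_k = A_m$; an easy induction gives $A_{m'} = A_m$ for all $m' \geq m$, and coverability forces $A_m = \bigcup_{m'} A_{m'} = \Bbb F_q$, contradicting $c(m) < d$. Hence $c$ increases strictly until it reaches $d$, so $c(d) = d$ and $A_d = \Bbb F_q$, giving $\Bbb F_q \subseteq d R_k$ and $\gamma(k, q) \leq d \leq k$. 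The only real obstacle is recognizing that the $R_k^*$-invariance of the sumsets forces them to be unions of cosets; everything else is bookkeeping inside the cyclic group $\Bbb F_q^*$.
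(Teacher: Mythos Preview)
Your proof is correct and follows essentially the same approach as the paper's: both exploit that the iterated sumsets $mR_k$ are $R_k^*$-invariant and hence unions of cosets of $R_k^*$ (together with $0$), then count that the chain $R_k \subseteq 2R_k \subseteq \cdots$ can have at most $d-1$ strict inclusions before stabilizing at $\Bbb F_q$. Your $c(m)$ bookkeeping is a slightly more explicit packaging of exactly the argument the paper gives.
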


\begin{proof}
We have $R_k \subseteq 2R_k \subseteq \cdots \subseteq mR_k= (m+1)R_k=\cdots$ for some $m$ since $\Bbb F_q$ is finite. We assumed $\Bbb F_q$ is coverable this implies $mR_k=\Bbb F_q$. If $nR_k \neq (n+1)R_k$ for some $n$, that means there exists a nonzero element $\alpha$ in $(n+1)R_k$ but not in $nR_k$. Let $R_k^{\ast}:= R_k \setminus \{ 0 \} $. $R_k^{\ast}$ is a group under multiplication and it is a subgroup of $\Bbb F_q^{\ast}$. Notice that if we can write $\alpha$ as a sum of $n+1$ many $k^{\text{\tiny th}}$ powers, then any element in the coset of $\alpha$, i.e. in $\alpha R_k^{\ast}$, can be written as a sum of $n+1$ many $k^{\text{\tiny th}}$ powers. This implies if $\alpha \in (n+1)R_k^{\ast}$, then $\alpha R_k^{\ast} \subseteq (n+1)R_k^{\ast}$. This means the number of the strict inclusions in $R_k \subseteq 2R_k \subseteq \cdots \subseteq mR_k$ is at most one less than the number of cosets of $R_k^{\ast}$ inside $\Bbb F_q^{\ast}$. Moreover, since $\Bbb F_q^{\ast}$ is cyclic of order $q-1$ and $R_k^{\ast}$ is a subgroup of $\Bbb F_q^{\ast}$, $R_k^{\ast}$ has to be a cyclic group. Let $\Bbb F_q^{\ast}=\langle \gamma \rangle$, then $R_k^{\ast} = \langle \gamma^{k} \rangle$ and $ | R_k^{\ast} | = \frac{q-1}{\gcd(k,q-1)}$. So, the number of cosets is $\gcd(k,q-1)$. Also, it is clear that $\Bbb F_q^{\ast}=R_k^{\ast}$ if and only if $\gcd(k,q-1) =1$.
\end{proof}

This section proceeds like this. First, we prove that when a finite field is sufficiently large, every element of the field can be written as a sum of two $k^{\text{\tiny th}}$ powers, three $k^{\text{\tiny th}}$ powers etc. There exists some similar old results in the literature implying our result in \cite{CS3}, \cite{Weil1949}, \cite{Winterhof1}; but we will provide as elementary and self-contained proof as possible by following the graph theoretical approach. Our proof actually turns out to be very fruitful, since we also obtain an original result which is an analog of S\'ark\"ozy's theorem in a finite field setting. After S\'ark\"ozy's we provide the precise values for $\gamma(k,q)$ for $4 \leqslant k \leqslant 37$ for all finite fields (only except some of the $\gamma(k,q) \leqslant 3$). Those results were obtained with the help of a supercomputer.

\begin{thm} \label{Main Theorem}
If $q>k^4$, then every element of $\Bbb F_q$ can be written as a sum of two $k^{\text{\tiny th}}$ powers. 
\end{thm}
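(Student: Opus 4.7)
The plan is to reduce the claim to a positivity statement for a 2-walk count in a weighted Cayley-type digraph on $\mathbb{F}_q$, and then close the estimate by combining the spectral framework of Section~\ref{spectral section} with Gauss sum magnitudes. Let $N(\alpha) = \#\{(x,y) \in \mathbb{F}_q \times \mathbb{F}_q : x^k + y^k = \alpha\}$; it suffices to show $N(\alpha) > 0$ for every $\alpha \in \mathbb{F}_q$. View $N(\alpha)$ as the $(0,\alpha)$-entry of $\mathbb{A}^2$, where $\mathbb{A}$ is the weighted adjacency matrix defined by $\mathbb{A}_{u,v} = \#\{x \in \mathbb{F}_q : x^k = v-u\}$. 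Because $\mathbb{A}_{u,v}$ depends only on $v-u$, the additive characters $\chi_t$ of $\mathbb{F}_q$ form an eigenbasis exactly as in Theorem~\ref{character theory}, with eigenvalues $\lambda_t = \sum_{x \in \mathbb{F}_q} \chi_t(x^k)$. The principal eigenvalue is $\lambda_0 = q$, and the spectral expansion reads
\[
N(\alpha) \;=\; q \;+\; \frac{1}{q}\sum_{t \ne 0} \lambda_t^{\,2}\,\chi_t(-\alpha).
\]

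The central step is the pointwise bound $|N(\alpha) - q| \le (k-1)^2\sqrt{q}$. Writing $d = \gcd(k, q-1) \le k$ and expressing the indicator of $k$-th powers via the multiplicative characters of order dividing $d$ (parallel to the additive theory set up in the ``Cayley Graphs and Character Theory'' subsection), one obtains, for $t \ne 0$,
\[
\lambda_t \;=\; \sum_{\psi \ne 1,\; \psi^d = 1} G(\psi, \chi_t),
\]
where each Gauss sum $G(\psi, \chi_t)$ attached to nontrivial $\psi$ and $\chi_t$ has magnitude $\sqrt{q}$. The naive triangle inequality using only $|\lambda_t| \le (k-1)\sqrt{q}$ yields merely $|N(\alpha) - q| \le (k-1)^2 q$, which is too weak. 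To extract the required $\sqrt{q}$-saving, I would substitute the Gauss-sum expansion into $\lambda_t^{\,2}$, interchange the order of summation, and apply the twisting identity $G(\psi, \chi_t) = \overline{\psi(t)}\,G(\psi, \chi_1)$; for each pair $(\psi_1, \psi_2)$ of nontrivial multiplicative characters, the inner $t$-sum reduces to either a small diagonal contribution (when $\psi_1\psi_2$ is trivial) or a further Gauss-type sum of magnitude $\sqrt{q}$. Aggregating at most $(k-1)^2$ such contributions produces the desired bound.

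The hypothesis $q > k^4$ now gives $(k-1)^2\sqrt{q} < q$, so the error cannot overwhelm the main term and $N(\alpha) > 0$ for every $\alpha$, completing the proof. The main obstacle is precisely the Gauss-sum manipulation producing the $\sqrt{q}$-saving: the spectral gap formalism alone only packages the triangle inequality and is too coarse here, so one must unfold the multiplicative-character structure of the $k$-th power map and invoke the magnitude formula $|G(\psi, \chi)| = \sqrt{q}$ explicitly.
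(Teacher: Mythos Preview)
Your argument is correct and yields the theorem, but it takes a genuinely different route from the paper's own proof. You work with the \emph{weighted} Cayley digraph whose $(u,v)$-entry is $\#\{x:x^k=v-u\}$, compute $N(\alpha)$ as a diagonal entry of $\mathbb{A}^2$, and then extract the $\sqrt{q}$-saving by unfolding $\lambda_t^{\,2}$ into products of Gauss sums and invoking $|G(\psi,\chi)|=\sqrt{q}$ together with the twisting identity. This is essentially the classical character-sum/Jacobi-sum approach.

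The paper instead uses the \emph{unweighted} Cayley digraph $Cay(\mathbb{F}_q,R_k^\ast)$ and applies the spectral gap theorem (Theorem~\ref{SGT}) not to singletons but to the cosets $X=R_k^\ast$ and $Y=vR_k^\ast$, each of size $(q-1)/d$. The nontrivial eigenvalues are bounded by the elementary $L^2$ identity
\[
\sum_{\alpha}\|\lambda_\alpha\|^2=\operatorname{tr}(\mathbb{A}^\ast\mathbb{A})=q\,|R_k^\ast|,
\]
which gives $\max_i\|\lambda_{\alpha_i}\|\le\sqrt{q-|R_k^\ast|}$ with no reference to Gauss sums at all. An elementary inequality (Lemma~\ref{induction1}) then shows that $q>k^4$ forces $|R_k^\ast|>n_\ast$, so an edge from $X$ to $Y$ exists and $v$ is a sum of two $k$-th powers.

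So your closing remark that ``the spectral gap formalism alone \ldots\ is too coarse here'' is true for your weighted setup with singleton sets, but it misreads the paper's strategy: the paper's point is precisely that by choosing $X$ and $Y$ to be whole cosets and bounding eigenvalues via the trace, the spectral gap theorem \emph{by itself} suffices, with no Gauss-sum input. Your approach buys an explicit asymptotic $N(\alpha)=q+O((k-1)^2\sqrt{q})$ and a marginally sharper constant; the paper's approach buys a fully self-contained argument that avoids the Gauss-sum magnitude formula entirely.
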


\begin{proof} 
We will use the spectral gap theorem to prove the result. Since $v=0$ is already a sum of two $k^{\text{\tiny th}}$ powers, let $v \in \Bbb F_q^{\ast}$. We have $d=\gcd(k,q-1)$ many cosets of $R_k^{\ast}$ inside $\Bbb F_q^{\ast}$ and $v \in vR_k^{\ast}$. The plan is to take $X=R_k^{\ast}$, $Y=vR_k^{\ast}$ and apply the spectral gap theorem. When we apply it, the theorem will guarantee the existence of a directed edge from $X$ to $Y$ under the assumption $q>k^4$. This means there exists some $u^k \in X$ and $vw^k \in Y$ for some $u,w,z \in \Bbb F_q^{\ast}$ such that $u^k+z^k=vw^k$ i.e. $v=(\frac{u}{w})^{k}+(\frac{z}{w})^{k}$, and the result follows. The only thing left is to figure out when the spectral gap theorem applies.
 
First, define a Cayley digraph $Cay(\Bbb F_q,R_k^{\ast})$ whose vertices are the elements of $\Bbb F_q$, and there exists a directed edge from $u$ to $v$ if and only if $v-u \in R_k^{\ast}$. Since $R_k^{\ast}$ does not contain $0$, $Cay(\Bbb F_q,R_k^{\ast})$ is loopless. We know every character on $\Bbb F_q$ corresponds to an eigenvector of the adjacency matrix, $\mathbb{A}$ of $Cay(\Bbb F_q,R_k^{\ast})$ by Theorem~\ref{character theory}. For instance if $\Bbb F_q^{\ast}=\langle \gamma \rangle$ we have $\Bbb F_q=\{0, \gamma, \gamma^2, \cdots, \gamma^{q-1}=1\}$, and any character $\chi$ gives us a unique eigenvector such that the first entry of the corresponding eigenvector is $\chi(0)$, the second entry of the eigenvector is $\chi(\gamma)$, the third entry is $\chi(\gamma^2)$ etc. We can even find the eigenvalues corresponding to those eigenvectors using the same theorem. Recall that we proved every character on $\Bbb F_q$ should be in the form of $\chi_{1} \circ \phi_{\alpha}$ for some $\alpha \in \Bbb F_q$ where $\phi_{\alpha}(x) = \operatorname{tr}(\alpha x)$ and $\chi_{1}(x)=e^{\frac{2\pi i x}{p}}$ for every $x \in \Bbb F_q$. Let $\lambda_{\alpha}$ denote the eigenvalue of $\mathbb{A}$ corresponding to the eigenvector induced from $\chi_{1} \circ \phi_{\alpha}$. By Theorem~\ref{character theory} we have 
\[ \lambda_{\alpha} = \sum_{x \in R_k^{\ast}} \chi_{1} \circ \phi_{\alpha}(x)= \sum_{x \in R_k^{\ast}} e^{\frac{2\pi i (\operatorname{tr}(\alpha x))}{p}}.\]
Notice if $\alpha R_k^{\ast}=\beta R_k^{\ast}$ we have \[ \sum_{x \in R_k^{\ast}} e^{\frac{2\pi i ( \operatorname{tr}(\alpha x))}{p}} = \sum_{x \in R_k^{\ast}} e^{\frac{2\pi i ( \operatorname{tr}(\beta x))}{p}}= \lambda_{\beta}.\]
This calculation shows us if $\alpha$ and $\beta$ are in the same coset of $R_k^{\ast}$ inside $\Bbb F_q^{\ast}$, the eigenvectors induced by $\chi_{1} \circ \phi_{\alpha}$  and $\chi_{1} \circ \phi_{\beta}$ correspond to the same eigenvalue. We also have the trivial character which induces $\boldsymbol{1}$ as an eigenvector with eigenvalue $|R_k^{\ast}|$, this means we have at most $\frac{|\Bbb F_q^{\ast}|}{|R_k^{\ast}|}+1$ many distinct eigenvalues. Thus the spectrum of $\mathbb{A}$ contains the largest eigenvalue $|R_k^{\ast}|$ with multiplicity $1$ and other eigenvalues, $ \lambda_{\alpha_{i}}$'s each with multiplicity $|R_k^{\ast}|$, since $|\alpha_{i} R_k^{\ast}|=|R_k^{\ast}|$. Therefore, we have 
\begin{equation} \label{lambda sum one}
\sum_{\alpha \in \Bbb F_q} \| \lambda_{\alpha} \|^2=|R_k^{\ast}|^2 + \sum_{i=1}^{d} \| \lambda_{\alpha_{i}} \|^2 |R_k^{\ast}|.
\end{equation}
At the same time we know 
\begin{equation} \label{lambda sum two}
\sum_{\alpha \in \Bbb F_q} \| \lambda_{\alpha} \|^2=\operatorname{tr}(\mathbb{A}^{\ast} \mathbb{A})
\end{equation} 
from basic linear algebra. Notice that we also have 
\begin{equation} \label{lambda sum three}
\operatorname{tr}(\mathbb{A}^{\ast} \mathbb{A})=\operatorname{tr}(\mathbb{A}^{T} \mathbb{A})=\operatorname{tr}(\mathbb{A} \mathbb{A}^{T})=\sum_{\alpha \in \Bbb F_q} d^{+}(\alpha)
\end{equation} as $\mathbb{A}$ is real $\mathbb{A}^{\ast}=\mathbb{A}^{T}$, and the last equality holds since $Cay(\Bbb F_q,R_k^{\ast})$ is a simple digraph. By Equations~\eqref{lambda sum one}, \eqref{lambda sum two} and \eqref{lambda sum three}  we have \[ |R_k^{\ast}|^2 + \sum_{i=1}^{d} \| \lambda_{\alpha_{i}} \|^2 |R_k^{\ast}|= q |R_k^{\ast}|.\]
Hence \[ \sum_{i=1}^{d} \| \lambda_{\alpha_{i}} \|^2  = q- |R_k^{\ast}|  \] which implies \[ \max_{1 \leqslant i \leqslant d} \| \lambda_{\alpha_{i}} \| \leqslant \sqrt{q- |R_k^{\ast} |} \ .\] 
It follows that \[ n_{\ast}= \frac{q}{|R_k^{\ast}|}\big( \max_{1 \leqslant i \leqslant d} \| \lambda_{\alpha_{i}} \| \big) \leqslant \frac{q \sqrt{q- |R_k^{\ast} |} }{ |R_k^{\ast}|}.\]

Now, let $x=d=\gcd(k,q-1)$ and $y=q$ in Lemma~\ref{induction1} in Appendix~\ref{Lemmata}. Notice that since $y$ is the order of the finite field, it is already in $\mathbb{Z}_{+} \setminus \{1\}$. Besides, there is no harm assuming $x \neq 1$, since we already handled $x=1$ case before in Proposition~\ref{gamma1 case}. By the lemma, we have $(y-1)^4-x^4y^3+(y-1)y^2x^3>0$, by dividing it with $x^4$ we get $\dfrac{(y-1)^4}{x^4}+\dfrac{(y-1)y^2}{x}>y^3$. This means $\lvert R_k^{\ast} \rvert^{4}+\lvert R_k^{\ast} \rvert q^2 >q^3$ which is to say $\lvert R_k^{\ast} \rvert^{4} >q^3-\lvert R_k^{\ast} \rvert q^2$ i.e. $\lvert R_k^{\ast} \rvert^{4} > q^2 (q-\lvert R_k^{\ast} \rvert)$. So $q>k^4$ implies $\lvert R_k^{\ast} \rvert > n_{\ast}$ and the result follows.
\end{proof}

\begin{thm} \label{gamma3}
If $q>k^3$, then every element of $\Bbb F_q$ can be written as a sum of three $k^{\text{\tiny th}}$ powers. 
\end{thm}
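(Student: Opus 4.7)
The plan is to mimic the proof of Theorem~\ref{Main Theorem} almost verbatim, but replace the single-edge version of the Spectral Gap Theorem with the $k$-walk version from Corollary~\ref{fancy spectral gap theorem} (applied with walk-length $2$). The point is that a directed $2$-walk in $Cay(\Bbb F_q, R_k^*)$ from $u^k \in X = R_k^*$ to $vw^k \in Y = vR_k^*$ witnesses an equation of the form $u^k + z_1^k + z_2^k = vw^k$ with $u,w,z_1,z_2 \in \Bbb F_q^*$; dividing through by $w^k$ exhibits $v$ as a sum of three $k^{\text{\tiny th}}$ powers, which is exactly what we want. The case $v=0$ is trivial.

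First I would recycle the entire spectral estimate from the proof of Theorem~\ref{Main Theorem}: the adjacency matrix $\mathbb{A}$ of $Cay(\Bbb F_q, R_k^*)$ has the trivial eigenvalue $|R_k^*|$ together with at most $d = \gcd(k, q-1)$ distinct nontrivial eigenvalues $\lambda_{\alpha_i}$, and the same trace argument yields
\[
\max_{1 \leqslant i \leqslant d} \|\lambda_{\alpha_i}\| \leqslant \sqrt{q - |R_k^*|}.
\]
Plugging this bound into the quantity $n_{\ast,2}$ appearing in Corollary~\ref{fancy spectral gap theorem} gives
\[
n_{\ast,2} \;=\; \frac{q}{|R_k^*|^2}\Big(\max_{1 \leqslant i \leqslant d} \|\lambda_{\alpha_i}\|\Big)^2 \;\leqslant\; \frac{q\bigl(q - |R_k^*|\bigr)}{|R_k^*|^2}.
\]
Since $\sqrt{|X||Y|} = |R_k^*|$, the hypothesis of Corollary~\ref{fancy spectral gap theorem} is satisfied as soon as one has the purely arithmetic inequality
\[
q\bigl(q - |R_k^*|\bigr) \;<\; |R_k^*|^3.
\]

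The one thing left, and the only real work, is verifying that this inequality holds whenever $q > k^3$. Using $|R_k^*| = (q-1)/d$ with $d \leqslant k$, clearing denominators, and substituting the worst case $d = k$, the inequality becomes
\[
(q-1)^3 + k^2 q(q-1) \;>\; k^3 q^2,
\]
which is exactly the $k^3$-analogue of Lemma~\ref{induction1}. I would either invoke that lemma directly (as is done in the proof of Theorem~\ref{Main Theorem}) or, if the statement there is formulated only for the fourth-power version, record a parallel elementary lemma in the appendix: setting $x = d$, $y = q$ and checking that $(y-1)^3 - x^3 y^2 + x^2 y(y-1) > 0$ whenever $y > x^3$ and $x \geqslant 2$ is a routine polynomial estimate (the leading terms are $y^3$ on each side, and the next-order terms favor the desired inequality). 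This is the one spot where care is needed, but no genuine obstruction appears: once the arithmetic lemma is in hand, Corollary~\ref{fancy spectral gap theorem} produces the required $2$-walk and the theorem follows.
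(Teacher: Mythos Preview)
Your proposal is correct and follows essentially the same argument as the paper: apply Corollary~\ref{fancy spectral gap theorem} with walk-length $2$ to $X=R_k^*$, $Y=vR_k^*$, recycle the eigenvalue bound $\max\|\lambda_{\alpha_i}\|\leqslant\sqrt{q-|R_k^*|}$ from the proof of Theorem~\ref{Main Theorem}, and reduce to the arithmetic inequality $|R_k^*|^3>q(q-|R_k^*|)$. The needed inequality is already recorded as Lemma~\ref{induction2} in Appendix~\ref{Lemmata} (with $x=d=\gcd(k,q-1)$, $y=q$, noting $q>k^3\geqslant d^3$), so you do not need to supply a new lemma.
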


\begin{proof} 
We will use Corollary~\ref{fancy spectral gap theorem} and the discussion provided right after that. We define a new digraph using the Cayley digraph in the previous theorem's proof. The vertices of the new digraph are the elements of $\Bbb F_q$, and there exists a directed edge from $u$ to $v$ if and only if there is a $2$-walk from $u$ to $v$ in the original graph. Since $v=0$ is already a sum of three $k^{\text{\tiny th}}$ powers, let $v \in \Bbb F_q^{\ast}$. Let $X=R_k^{\ast}$, $Y=vR_k^{\ast}$ and apply Corollary~\ref{fancy spectral gap theorem}. It will guarantee the existence of a directed $2$-walk from $X$ to $Y$ under the assumption $q>k^3$ and the result will follow.

Let $x=d=\gcd(k,q-1)$ and $y=q$ in Lemma~\ref{induction2} in Appendix~\ref{Lemmata}. By the lemma, we have $(y-1)^3-x^{2}y(xy-y+1)>0$, by dividing it with $x^3$ we get $\frac{(y-1)^3}{x^3} > y \Big( \frac{xy-y+1}{x} \Big)$. This means $\lvert R_k^{\ast} \rvert^{3} > q (q-\lvert R_k^{\ast} \rvert)$ i.e. $\lvert R_k^{\ast} \rvert > \frac {q (q-\lvert R_k^{\ast} \rvert)}{\lvert R_k^{\ast} \rvert^{2}}$ and the result follows.
\end{proof}
\begin{remark} If one can prove that $y^{m-1}>x^{2m}$ implies $(y-1)^m-yx^{\frac{m+1}{2}}(xy-y+1)^{\frac{m-1}{2}}>0$ for every $m,x,y \in \mathbb{Z}_{+} \setminus \{1\}$, then using the same exact approach in the previous theorems' proof, we can state this \[ q> k^{\frac{2m}{m-1}}  \implies \Bbb F_q \subseteq mR_k \] for every integer $m>1.$ The required inequality $(y-1)^m-yx^{\frac{m+1}{2}}(xy-y+1)^{\frac{m-1}{2}}>0$ whenever $y^{m-1}>x^{2m}$ is proven for $m=2$ and $m=3$ in Appendix~\ref{Lemmata} for the proofs of the previous theorems; but we did not prove it here for all $m \in \mathbb{Z}_{+} \setminus \{1\}$. This assertion is actually already familiar to us, since it is asymptotically equivalent to Winterhof's result. Recall Winterhof proved that \[ q> (k-1)^{\frac{2m}{m-1}}  \implies \Bbb F_q \subseteq mR_k.\] He used Jacobi sums to deduce his bound, see \cite{Winterhof1}. 
\end{remark}
Let the notation stay the same with the last two theorems. Then, we have another nice result: 
\begin{thm}[Analog of S\'ark\"ozy's Theorem in Finite Fields] 
Let $k$ be a positive integer. If $E$ is a subset of $\Bbb F_q$ with size $|E| > \dfrac{qk}{\sqrt{q-1}}$ then it contains at least two distinct elements whose difference is a $k^{\text{\tiny th}}$ power. Thus, in particular if $k$ is fixed and $|E|=\Omega \left(\sqrt{q} \right)$ as $q \rightarrow \infty$, then $E$ contains at least two distinct elements whose difference is a $k^{\text{\tiny th}}$ power.
\end{thm}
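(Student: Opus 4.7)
The plan is to apply the Spectral Gap Theorem (Theorem~\ref{SGT}) to the same Cayley digraph $\operatorname{Cay}(\mathbb{F}_q, R_k^{\ast})$ that was used in the proof of Theorem~\ref{Main Theorem}, but this time taking $X = Y = E$. Once we show that $|E| > n_{\ast}$, the theorem will guarantee the existence of two distinct vertices $u, v \in E$ with a directed edge between them, i.e.\ $v - u \in R_k^{\ast} \subseteq R_k$, and we will be done. Thus the entire task reduces to showing that the hypothesis $|E| > \tfrac{qk}{\sqrt{q-1}}$ implies $|E| > n_{\ast}$.

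The bulk of the work was already done inside the proof of Theorem~\ref{Main Theorem}: the same character-theoretic computation (sum of squared moduli of the eigenvalues equals $q|R_k^{\ast}|$, with the trivial eigenvalue $|R_k^{\ast}|$ contributing $|R_k^{\ast}|^2$) yields the bound
\[
n_{\ast} \;\leqslant\; \frac{q\sqrt{q - |R_k^{\ast}|}}{|R_k^{\ast}|}.
\]
I would simply quote this bound rather than re-derive it. From here the argument is purely algebraic. Let $d = \gcd(k, q-1)$, so that $|R_k^{\ast}| = (q-1)/d$. Since $d \leqslant k$, we have $|R_k^{\ast}| \geqslant (q-1)/k$, and since $|R_k^{\ast}| \geqslant 1$, we also have $\sqrt{q - |R_k^{\ast}|} \leqslant \sqrt{q - 1}$. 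Combining these two estimates,
\[
n_{\ast} \;\leqslant\; \frac{q \sqrt{q-1}}{(q-1)/k} \;=\; \frac{qk\sqrt{q-1}}{q-1} \;=\; \frac{qk}{\sqrt{q-1}}.
\]

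Therefore the hypothesis $|E| > \tfrac{qk}{\sqrt{q-1}}$ gives $|E| > n_{\ast}$, and the Spectral Gap Theorem produces two distinct elements of $E$ whose difference lies in $R_k^{\ast}$, hence is a $k^{\text{th}}$ power. The asymptotic statement is immediate: for fixed $k$, $\tfrac{qk}{\sqrt{q-1}} = O(\sqrt{q})$ as $q \to \infty$, so any $E$ with $|E| = \Omega(\sqrt{q})$ (with a sufficiently large implicit constant) satisfies the hypothesis.

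Honestly, there is no real obstacle here: all the heavy lifting (the spectral gap inequality and the eigenvalue bound for $\operatorname{Cay}(\mathbb{F}_q, R_k^{\ast})$) has already been carried out. The only place one must be careful is in bounding $|R_k^{\ast}|$ from below by $(q-1)/k$ rather than by some smaller quantity, which would weaken the constant in front of $\sqrt{q}$; this uses nothing more than $\gcd(k, q-1) \leqslant k$.
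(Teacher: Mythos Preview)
Your proposal is correct and follows essentially the same approach as the paper: quote the eigenvalue bound $n_{\ast} \leqslant \frac{q\sqrt{q-|R_k^{\ast}|}}{|R_k^{\ast}|}$ from the proof of Theorem~\ref{Main Theorem}, reduce it algebraically to $\frac{qk}{\sqrt{q-1}}$, and invoke Theorem~\ref{SGT} with $X=Y=E$. The only cosmetic difference is that the paper substitutes $|R_k^{\ast}| = (q-1)/d$ first and then uses $d \leqslant q-1$ to simplify $\sqrt{qd-q+1}$, whereas you bound numerator and denominator separately via $|R_k^{\ast}| \geqslant 1$ and $|R_k^{\ast}| \geqslant (q-1)/k$; both routes yield the identical final inequality.
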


\begin{proof}
As in the last two theorems, again we want to apply the spectral gap theorem.
Our calculations in the proof of Theorem~\ref{Main Theorem} showed that $n_{\ast} \leqslant \dfrac{q \sqrt{q- |R_k^{\ast} |} }{ |R_k^{\ast}|}$. 
We have
\[\frac{q \sqrt{q- |R_k^{\ast} |} }{ |R_k^{\ast}|} = \frac{q \sqrt{q- \frac{q-1}{d}} }{\frac{q-1}{d}}=\frac{q}{q-1}\sqrt{d} \sqrt{qd-q+1}. \] 
Since $d= \gcd(k,q-1) \leqslant q-1$ i.e. $d+1 \leqslant q$ we have 
\[ \frac{q}{q-1}\sqrt{d} \sqrt{qd-q+1} \leqslant \frac{q}{q-1}\sqrt{d} \sqrt{d(q-1)} = \frac{qd}{\sqrt{q-1}} \leqslant \frac{qk}{\sqrt{q-1}}.\]
If $\frac{qk}{\sqrt{q-1}}< |E|$, by Theorem~\ref{SGT} there is an edge from a vertex in $E$ to a distinct vertex in $E$ and the result follows.
\end{proof}

For example consider $\Bbb F_{41^2}=\Bbb F_{1681}$. If we take an arbitrary subset of $\Bbb F_{1681}$ with size greater than or equal to $124$ ($165$, $206$ resp.), then we know there exists at least two distinct elements in this set whose difference is a $3^{\text{\tiny rd}}$ ($4^{\text{\tiny th}}$, $5^{\text{\tiny th}}$ resp.) power. 

We claimed before that we will state the precise values of $\gamma(k,q)$ for every $4 \leqslant k \leqslant 37$ only except some of $\gamma(k,q) \leqslant 3$. Hence let $4 \leqslant k \leqslant 37$ be given. First note that by the theory, we know that for every $k$, there exists only a finite number of finite fields which are not coverable with the $k^{\text{\tiny th}}$ powers, see e.g. \cite{Torn}. Table~\ref{table1} lists all of the uncoverable finite fields for $4 \leqslant k \leqslant 37$. If we are given with a specific $q$ as well, then we can determine if $\gamma(k,q)=1$ using the criteria in Proposition~\ref{gamma1 case}. Theorem~\ref{table2-thm} and \ref{table3-thm} (upcoming) handle all of the cases where $ \gamma(k,q) =4,5,6$ and some of the cases where $ \gamma(k,q) =3$. Theorem~\ref{gamma bigger than or equal to 7-thm} provides all of the fields such that $ \gamma(k,q) \geqslant7$. Therefore, if we pick a $4 \leqslant k \leqslant 37$ and a coverable field $\Bbb F_{q}$ by taking into consideration Table~\ref{table1}, and if it doesn't fall into one of the categories in Theorem~\ref{Main Theorem},~\ref{table2-thm},~\ref{table3-thm} or ~\ref{gamma bigger than or equal to 7-thm}, then we can conclude $\gamma(k,q) \leqslant 3$.

The results in Proposition~\ref{table1-prop} below are discovered via exhaustive search using Sage, in principle one can calculate these with Theorem~G in \cite{Bhaskaran} doing finitely many calculations.

\begin{prop} \label{table1-prop}
Table~\ref{table1} lists all of the uncoverable fields for any $4 \leqslant k \leqslant 37$.
\end{prop}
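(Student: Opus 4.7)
The plan is to reduce the claim to a finite verification that can be carried out by computer. The key observation, recalled at the beginning of this section from \cite{Winterhof1}, is that $\Bbb F_q$ is uncoverable with $k^{\text{\tiny th}}$ powers if and only if $R_k$ is contained in a proper subfield of $\Bbb F_q$. Since $R_k^{\ast}$ is the unique subgroup of $\Bbb F_q^{\ast}$ of order $(q-1)/d$ where $d=\gcd(k,q-1)$, and since $\Bbb F_{p^s}^{\ast}$ is the unique subgroup of $\Bbb F_q^{\ast}$ of order $p^s-1$, this containment is equivalent to the purely numerical condition
\[ \frac{q-1}{\gcd(k,q-1)} \ \Big| \ p^{s}-1 \ \ \text{for some proper divisor } s \text{ of } r, \]
where $q=p^r$. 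Note in particular that prime fields $\Bbb F_p$ are always coverable, since $1 \in R_k$ already generates $\Bbb F_p$ additively; so only $q=p^r$ with $r \geqslant 2$ need to be considered.

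The next step is to bound the search space. By Theorem~\ref{Main Theorem}, which we have already proven, $q > k^4$ forces every element of $\Bbb F_q$ to be a sum of two $k^{\text{\tiny th}}$ powers, so $\Bbb F_q$ is automatically coverable in that range. Hence for $4 \leqslant k \leqslant 37$ it suffices to examine prime powers $q=p^r$ with $r \geqslant 2$ and $q \leqslant 37^4 = 1874161$, a finite list. For each such $(k,q)$ one simply computes $d=\gcd(k,q-1)$ and runs through the proper divisors $s$ of $r$, checking the divisibility condition displayed above; the uncoverable fields are precisely those $(k,q)$ for which some $s$ satisfies it.

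Implementing this loop in Sage and recording all hits yields exactly the content of Table~\ref{table1}. The main obstacle is purely bookkeeping rather than mathematical: one must take care to enumerate all intermediate subfields $\Bbb F_{p^s}$ for every proper divisor $s$ of $r$ (not merely the prime subfield $\Bbb F_p$), and to double-check that the bound $q \leqslant k^4$ is used correctly to certify that nothing was missed beyond the search window. Once those two points are addressed, the table entries are reproduced directly by the exhaustive search.
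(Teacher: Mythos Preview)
Your argument is correct, and it takes a somewhat different route from the paper. The paper establishes the table by a direct brute-force search: for each prime power $q \leqslant 8k^4$ it computes $R_k$, $2R_k$, $3R_k,\ldots$ explicitly by summing elements, and flags $\Bbb F_q$ as uncoverable when the chain stabilises short of the whole field (the paper also remarks that Theorem~G of \cite{Bhaskaran} would give a finite criterion, but does not actually invoke it). You instead reduce the question to the purely arithmetic divisibility test $(q-1)/\gcd(k,q-1)\mid p^s-1$ for some proper $s\mid r$, derived from the subfield criterion cited from \cite{Winterhof1}, and then bound the search window by Theorem~\ref{Main Theorem}. Your approach is cleaner and computationally far lighter, since it avoids constructing any sumsets at all and needs only to loop over divisors; the paper's method, on the other hand, has the advantage that the same computation simultaneously yields the exact values $\gamma(k,q)$ needed for Tables~\ref{table2}--\ref{table3} and Theorem~\ref{gamma bigger than or equal to 7-thm}, so the extra work is not wasted in context.
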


\begin{table}[h!]
\centering
\caption{Uncoverable Fields}
\begin{tabu}  { | p{0.5cm} | | p{4cm} | }
\hline
\boldmath$k$ & \textbf{uncoverable fields for} \boldmath$k$ \\
\hline
$4$  & $9$  \\
\hline
$5$  &  $16$  \\
\hline
$6$  &  $4, 25$  \\
\hline
$7$  &  $8$  \\
\hline
$8$  &  $9, 49$  \\
\hline
$9$  &  $4, 64$ \\
\hline
$10$  &  $16, 81$  \\
\hline
$12$  &  $4, 9, 25, 121$  \\
\hline
$13$  &  $27$  \\
\hline
$14$  &  $8, 169$  \\
\hline
$15$  &  $4, 16$  \\
\hline
$16$  &  $9, 49$  \\
\hline
$17$  &  $256$  \\
\hline
$18$  &  $4, 25, 64, 289$  \\
\hline
\end{tabu}
\qquad  \ \ \ \ \ \ \ \ \ \ \ \ \ \ \ \ \ \ \ \ \ \ \ \ \ \ \ \
\begin{tabu}  { | p{0.5cm} | | p{4cm} | }
\hline
\boldmath$k$ & \textbf{uncoverable fields for} \boldmath$k$ \\
\hline
$20$  &  $9, 16, 81, 361$  \\
\hline
$21$  &  $4, 8, 64$  \\
\hline
$24$  &  $4, 9, 25, 49, 121, 529$  \\
\hline
$25$  &  $16$  \\
\hline
$26$  &  $27, 625$  \\
\hline
$27$  &  $4, 64$  \\
\hline
$28$  &  $8, 9, 169, 729$ \\
\hline
$30$  &  $4, 16, 25, 81, 841$  \\
\hline
$31$  &  $32, 125$  \\
\hline
$32$   &  $9, 49, 961$  \\
\hline
$33$   &  $4, 1024$  \\
\hline
$34$   &  $256$  \\
\hline
$35$   &  $8, 16$   \\
\hline
$36$   &  $4, 9, 25, 64, 121, 289$  \\
\hline
\end{tabu}  \newline
\label{table1}
\end{table} 

Notice that $k=11, 19, 22, 23, 29, 37$ cases are not on the Table~\ref{table1}. It is because there are not any uncoverable fields corresponding to those $k$ values.

\begin{thm} \label{table2-thm} 
For every integer $k$ satisfying $4 \leqslant k \leqslant 19$, Table~\ref{table2} lists all of the fields with size $q$ such that $\gamma(k,q)=3,4,5$ or $6$. 
\end{thm}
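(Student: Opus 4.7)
The plan is to reduce the claim to a finite exhaustive search and then verify each entry of Table~\ref{table2} by direct computation. The key reduction comes from the two bounds already established: Theorem~\ref{Main Theorem} guarantees $\gamma(k,q) \leqslant 2$ whenever $\gamma(k,q)$ exists and $q > k^4$, while Theorem~\ref{gamma3} guarantees $\gamma(k,q) \leqslant 3$ whenever $q > k^3$. Consequently, only prime powers $q \leqslant k^4$ can contribute a value $\gamma(k,q) = 3$, and only prime powers $q \leqslant k^3$ can contribute $\gamma(k,q) \in \{4, 5, 6\}$. For $4 \leqslant k \leqslant 19$ this confines the search to finitely many pairs $(k,q)$, the crudest upper bound being the prime powers up to $19^4 = 130321$.

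With the search space trimmed in this way, the computation itself is straightforward. For each remaining candidate $(k,q)$, I would first use Proposition~\ref{table1-prop} (equivalently Table~\ref{table1}) to discard the finitely many uncoverable fields, since $\gamma(k,q)$ is undefined for those and they do not belong in Table~\ref{table2}. For each surviving candidate, enumerate $R_k = \{\alpha^k : \alpha \in \Bbb F_q\}$ directly by raising each field element to the $k^{\text{\tiny th}}$ power, and then iteratively form the sumsets $jR_k = (j-1)R_k + R_k$, halting the first time $jR_k = \Bbb F_q$; that $j$ is $\gamma(k,q)$. A value landing in $\{3,4,5,6\}$ is recorded in the table, and any pair giving $\gamma(k,q) \leqslant 2$ is discarded. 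Proposition~\ref{gamma1 case} supplies a convenient sanity check: $\gamma(k,q) = 1$ holds precisely when $\gcd(k, q-1) = 1$, so any pair with $\gcd(k,q-1) > 1$ genuinely requires at least two $k^{\text{\tiny th}}$ powers and the iterative procedure must not terminate at $j = 1$.

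The main obstacle is purely one of scale rather than conceptual depth. For $k$ near $19$ the search sweeps every prime power up to roughly $1.3 \times 10^{5}$, and for each such $q$ the iterated sumset computation in $\Bbb F_q$ is infeasible by hand but routine by machine. As the author notes at the start of Section~\ref{fields section}, a supercomputer (with Sage) was enlisted for exactly this purpose, and the entries of Table~\ref{table2} are the output of the resulting exhaustive but completely mechanical verification. An auxiliary check I would perform is that the uncoverable fields detected computationally (those for which the sumset chain $R_k \subseteq 2R_k \subseteq \cdots$ stabilizes strictly inside $\Bbb F_q$) match Table~\ref{table1} exactly, giving independent confirmation of Proposition~\ref{table1-prop}.
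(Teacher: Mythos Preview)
Your proposal is correct and matches the paper's approach: the paper does not give a separate proof of Theorem~\ref{table2-thm} but relies on exactly the reduction you describe (Theorems~\ref{Main Theorem} and~\ref{gamma3} bound the search to finitely many prime powers) followed by the Sage computation whose code is reproduced in Appendix~B, which iteratively builds the sumsets $jR_k$ until they exhaust $\Bbb F_q$. The only cosmetic difference is that the paper's code loops over prime powers up to $8k^4$ rather than $k^4$, a harmless safety margin that does not alter the method.
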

 
\begin{longtable}{ | p{0.5cm} | p{9cm} | p{2.5cm} | p{2.1cm} | p{2.1cm} | }
\caption{Fields} \label{table2} \\
\hline
\boldmath$k$ & \boldmath$\gamma(k,q)= 3$ & \boldmath$\gamma(k,q)= 4$ & \boldmath$\gamma(k,q)= 5$ & \boldmath$\gamma(k,q)= 6$ \\
\hline
\endfirsthead
\caption[]{(continued)} \\
\hline
\boldmath$k$ & \boldmath$\gamma(k,q)= 3$ & \boldmath$\gamma(k,q)= 4$ & \boldmath$\gamma(k,q)= 5$ & \boldmath$\gamma(k,q)= 6$ \\
\hline
\endhead
$4$ & $13$, $17$, $25$, $29$ & $5$ & \ \ & \ \  \\
\hline
$5$ & $31$, $41$, $61$ & \ \  & $11$ & \ \  \\
\hline
$6$ & $37$, $43$, $49$, $61$, $67$, $73$, $79$, $109$, $139$, $223$ & $19$, $31$ & \ \ & $7$, $13$ \\
\hline
$7$ & $64$, $71$, $113$, $127$ & $29$, $43$ & \ \ & \ \  \\
\hline
$8$ & $13$, $29$, $73$, $81$, $89$, $97$, $113$, $121$, $137$, $233$, $257$, $289$, $337$, $761$ & $5$, $25$, $41$ & \ \  & \ \  \\
\hline
$9$ & $7$, $73$, $109$, $127$, $163$, $181$, $199$, $271$, $307$, $343$ &  \ \  &  $37$ & \ \  \\
\hline
$10$ & $71$, $101$, $121$, $131$, $151$, $181$, $191$, $211$, $241$, $251$, $271$, $281$, $311$, $331$, $401$, $421$, $431$, $461$, $491$, $641$, $911$ & $41$, $61$ & $31$ & \ \  \\
\hline
$11$ & $199$, $331$, $353$, $419$, $463$, $617$ & $89$ & $67$ & \ \  \\
\hline
$12$ & $17$, $29$, $43$, $67$, $79$, $139$, $157$, $169$, $181$, $193$, $223$, $241$, $277$, $289$, $313$, $337$, $349$, $361$, $373$, $397$, $409$, $421$, $433$, $457$, $541$, $577$, $625$, $661$, $673$, $733$, $841$, $877$, $1069$, $1453$, $1669$, $1741$ & $5$, $19$, $31$, $61$, $97$, $109$, $229$ &  $73$ & $7$,  $37$, $49$ \\
\hline
$13$ & $131$, $157$, $313$, $443$, $521$, $547$, $599$, $677$, $859$, $911$, $937$, $1171$ & \ \ & $79$ & $53$ \\
\hline
$14$ & $64$, $197$, $211$, $239$, $281$, $337$, $379$, $421$, $449$, $463$, $491$, $547$, $617$, $631$, $659$, $673$, $701$, $743$, $757$, $827$, $911$, $953$, $967$, $1009$, $1499$, $2927$ & $71$, $113$, $127$ & \ \  & $43$  \\
\hline
$15$ & $7$, $41$, $151$, $181$, $211$, $241$, $256$, $271$, $331$, $361$, $421$, $541$, $571$, $601$, $631$, $661$, $691$, $751$, $811$, $961$, $991$, $1021$, $1051$, $1171$, $1201$, $3181$ & $121$ & $11$, $61$ & \ \ \\
\hline
$16$ & $13$, $29$, $73$, $89$, $121$, $137$, $233$, $241$, $289$, $337$, $353$, $401$, $433$, $449$, $529$, $577$, $593$, $625$, $641$, $673$, $761$, $769$, $881$, $929$, $977$, $1009$, $1201$, $1249$, $1297$, $1409$, $1489$, $1697$, $2017$, $2401$, $2593$ & $5$, $25$, $41$, $81$, $113$, $193$, $257$ & \ \ & $97$ \\
\hline
$17$ & $239$, $307$, $409$, $443$, $613$, $647$, $919$, $953$, $1021$, $1123$, $1259$, $1327$, $1361$, $1531$, $1667$ & $137$ & \ \ & $103$ \\
\hline
$18$ & $43$, $49$, $61$, $67$, $79$, $139$, $223$, $307$, $343$, $361$, $379$, $433$, $487$, $523$, $541$, $577$, $613$, $631$, $739$, $757$, $811$, $829$, $883$, $919$, $937$, $991$, $1009$, $1063$, $1117$, $1153$, $1171$, $1279$, $1369$, $1423$, $1459$, $1693$, $1747$, $2089$, $2197$, $2251$, $2269$, $2287$, $2503$, $2719$, $3259$, $4519$ & $31$, $163$, $181$, $271$, $397$ & $199$ & $7$, $13$, $109$, $127$ \\
\hline
$19$ & $343$, $457$, $571$, $647$, $761$, $1103$, $1217$, $1483$, $1559$, $1597$, $1787$, $2053$, $2129$, $2357$, $2927$ & $191$, $229$, $419$ & \ \ & \ \  \\
\hline
\end{longtable}

\begin{thm} \label{table3-thm} 
Let $20 \leqslant k \leqslant 37$ be any given integer. 
\vspace{-0.3cm}
\begin{itemize}
\item Table~\ref{table3} lists all of the fields with size $q$ such that $ \gamma(k,q)=4,5$ or $6$. 
\vspace{-0.3cm}
\item Table~\ref{table3} lists all of the fields with size $q$ such that $ \gamma(k,q)=3$ and $q \leqslant (k-1)^3$.
\end{itemize}
\vspace{-0.3cm}
(We do not list all of the finite fields with $ \gamma(k,q)=3$ here, since when we ran the computer, we used a bound from \cite{Winterhof1}, which is if $q >(k-1)^3$ then $\Bbb F_q \subseteq 3R_{k}$.)
\end{thm}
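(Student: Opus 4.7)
The claim is essentially computational: for each fixed $k \in \{20, 21, \ldots, 37\}$ we must enumerate all prime powers $q$ belonging to the specified $\gamma$-categories. My plan is to cut the search down to a finite window and then do exhaustive verification inside it. The first ingredient is Winterhof's bound cited in the parenthetical remark after the theorem, namely $q > (k-1)^3 \Rightarrow \gamma(k,q) \leqslant 3$. This immediately reduces both bullets to prime powers in the finite window $q \leqslant (k-1)^3$: for the first bullet it is forced (any $q$ with $\gamma(k,q) \in \{4,5,6\}$ lies in this range), and for the second bullet it is the imposed restriction.

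Within that window I would eliminate the cases accounted for by earlier results in the paper. The uncoverable fields listed in Proposition~\ref{table1-prop} (Table~\ref{table1}) are discarded, since they contribute no $\gamma$-value. The fields satisfying $\gcd(k,q-1)=1$ are discarded by Proposition~\ref{gamma1 case} since they give $\gamma(k,q)=1$. For every remaining prime power $q \leqslant (k-1)^3$ I would then compute $\gamma(k,q)$ by direct enumeration: form $R_k = \{\alpha^k : \alpha \in \Bbb F_q\}$, iteratively build the sumsets $mR_k = R_k + (m-1)R_k$ for $m = 2, 3, \ldots$, and record the smallest $m$ with $mR_k = \Bbb F_q$. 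Proposition~\ref{gamma1 case} guarantees termination in at most $\gcd(k,q-1) \leqslant k$ steps, so this is a finite procedure for each pair $(k,q)$.

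Finally I would collate the output. Prime powers $q$ with $\gamma(k,q) \in \{4,5,6\}$, together with those with $\gamma(k,q)=3$ and $q \leqslant (k-1)^3$, populate Table~\ref{table3}; every other prime power in the window that is not uncoverable must satisfy either $\gamma(k,q) \in \{1,2\}$ (consistent with Proposition~\ref{gamma1 case} and Theorem~\ref{Main Theorem}) or $\gamma(k,q) \geqslant 7$ (which is handled separately by Theorem~\ref{gamma bigger than or equal to 7-thm}). Completeness of the table in both directions then follows from the fact that the enumeration above examined every relevant $q$.

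The main obstacle is the sheer scale of the computation rather than any conceptual difficulty. For $k=37$ the bound $(k-1)^3 = 46656$ leaves a considerable list of prime powers, each requiring construction of $R_k$ and iterated sumsets inside a finite field that may itself have tens of thousands of elements. Naive implementation is cost-prohibitive, which is why the authors resort to a Sage computation run on a supercomputer; once that enumeration terminates, Table~\ref{table3} is simply read off from the tabulated $\gamma$-values.
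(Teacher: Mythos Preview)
Your proposal is correct and matches the paper's approach: the theorem has no conventional proof in the paper, only the Sage code in Appendix~B, which implements exactly the iterative-sumset computation of $\gamma(k,q)$ you describe. The only minor discrepancy is that the paper's code searches the larger window $q\leqslant 8k^4$ (presumably to also harvest the $\gamma=2$ data for Table~\ref{table2}), whereas you cut off at $(k-1)^3$; for the statement at hand your smaller window suffices by Winterhof's bound, as you correctly argue.
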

 
\begin{longtable}{ | p{0.5cm} | p{9cm} | p{2.5cm} | p{2.1cm} | p{2.1cm} | }
\caption{Fields} \label{table3} \\
\hline
\boldmath$k$ & \boldmath$\gamma(k,q)= 3$ & \boldmath$\gamma(k,q)= 4$ & \boldmath$\gamma(k,q)= 5$ & \boldmath$\gamma(k,q)= 6$ \\
\hline
\endfirsthead
\caption[]{(continued)} \\
\hline
\boldmath$k$ & \boldmath$\gamma(k,q)= 3$ & \boldmath$\gamma(k,q)= 4$ & \boldmath$\gamma(k,q)= 5$ & \boldmath$\gamma(k,q)= 6$ \\
\hline
\endhead
$20$ & $13$, $17$, $25$, $29$, $71$, $131$, $151$, $191$, $211$, $251$, $271$, $281$, $311$, $331$, $401$, $421$, $431$, $461$, $491$, $541$, $601$, $641$, $661$, $701$, $761$, $821$, $841$, $881$, $911$, $941$, $961$, $1021$, $1061$, $1181$, $1201$, $1301$, $1321$, $1361$, $1381$, $1481$, $1601$, $1621$, $1681$, $1721$, $1741$, $1861$, $1901$, $2221$, $2281$, $2381$, $2621$, $2861$ & $5$, $181$, $241$, $521$ & $31$, $101$ & \ \  \\
\hline
$21$ & $7$, $71$, $113$, $379$, $421$, $463$, $547$, $631$, $673$, $757$, $841$, $883$, $967$, $1009$, $1051$, $1093$, $1303$, $1429$, $1471$, $1597$, $1723$, $1849$, $2143$, $2437$, $2521$, $2647$, $2689$, $2857$, $3319$, $5167$ & $29$, $169$, $211$, $337$ & \ \ & $127$ \\
\hline
$22$ & $243$, $353$, $397$, $529$, $617$, $661$, $683$, $727$, $859$, $881$, $947$, $991$, $1013$, $1123$, $1277$, $1321$, $1409$, $1453$, $1607$, $1783$, $1871$, $2003$, $2069$, $2113$, $2179$, $2267$, $2333$, $2377$, $2399$, $2531$, $2663$, $2729$, $2927$, $3037$, $3719$, $3851$, $3917$ & $199$, $331$, $419$, $463$ & \ \ & \ \ \\
\hline
$23$ & $461$, $599$, $691$, $829$, $967$, $1013$, $1151$, $1289$, $1381$, $1427$, $1657$, $1933$, $1979$, $2209$, $2347$, $2393$, $2531$, $3083$, $3313$, $4831$, $4969$, $5659$, $8419$ & $277$ & \ \ & \ \ \\
\hline
$24$ & $29$, $43$, $67$, $79$, $81$, $89$, $113$, $137$, $139$, $157$, $181$, $223$, $233$, $257$, $277$, $337$, $349$, $373$, $397$, $421$, $433$, $541$, $601$, $625$, $661$, $673$, $733$, $761$, $769$, $841$, $877$, $937$, $961$, $1009$, $1033$, $1069$, $1129$, $1153$, $1201$, $1249$, $1297$, $1321$, $1369$, $1453$, $1489$, $1609$, $1657$, $1669$, $1681$, $1741$, $1753$, $1777$, $1801$, $1849$, $1873$, $1993$, $2017$, $2089$, $2113$, $2137$, $2161$, $2281$, $2377$, $2401$, $2473$ & $5$, $19$, $31$, $41$, $61$, $109$, $169$, $229$, $241$, $313$, $361$, $409$, $457$, $577$ & $193$, $289$ & $7$, $37$ \\
\hline
$25$ & $31$, $41$, $61$, $401$, $601$, $701$, $751$, $1051$, $1151$, $1201$, $1301$, $1451$, $1601$, $1801$, $1901$, $1951$, $2251$, $2351$, $2551$, $2801$, $2851$, $3001$, $3251$, $3301$, $3851$, $5051$, $5501$ & $251$ & $11$ & \ \ \\
\hline
$26$ & $521$, $677$, $729$, $859$, $911$, $937$, $1093$, $1171$, $1223$, $1249$, $1301$, $1327$, $1483$, $1613$, $1847$, $1873$, $1951$, $2003$, $2029$, $2081$, $2237$, $2341$, $2393$, $2549$, $2809$, $2861$, $2887$, $2939$, $3121$, $3329$, $3407$, $3433$, $3511$, $3719$, $3797$ & $313$, $443$, $547$, $599$ & \ \ & $131$ \\
\hline
$27$ & $7$, $73$, $127$, $181$, $199$, $307$, $343$, $379$, $757$, $811$, $919$, $1297$, $1459$, $1567$, $1621$, $1783$, $1999$, $2053$, $2161$, $2269$, $2377$, $2539$, $2593$, $2647$, $2917$, $2971$, $3079$, $3187$, $3457$, $3727$, $3889$, $3943$, $4051$, $4159$, $4483$, $4591$, $4861$, $4969$, $5023$, $6859$, $7993$, $9397$, $9829$ & $271$, $433$, $487$, $541$ & $37$ & \ \ \\
\hline
$28$ & $13$, $17$, $25$, $64$, $211$, $239$, $379$, $463$, $491$, $547$, $631$, $659$, $673$, $701$, $743$, $757$, $827$, $841$, $911$, $953$, $967$, $1009$, $1093$, $1289$, $1373$, $1429$, $1499$, $1597$, $1681$, $1709$, $1849$, $1877$, $1933$, $2017$, $2129$, $2213$, $2269$, $2297$, $2381$, $2437$, $2521$, $2549$, $2633$, $2689$, $2801$, $2857$, $2927$, $2969$, $3109$, $3137$, $3221$, $3361$, $3389$, $3529$, $3557$, $3613$, $3697$, $4201$, $4229$, $4397$, $4481$, $4621$, $4649$, $4733$, $4789$, $4817$, $4957$ & $5$, $71$, $127$, $337$, $421$, $449$, $617$ & $197$, $281$ & $43$ \\
\hline
$29$ & $523$, $929$, $1103$, $1277$, $1451$, $1567$, $1741$, $1973$, $2089$, $2437$, $3191$, $3307$, $3481$, $3539$, $4003$, $4177$, $4409$, $4583$, $4931$, $5569$, $5801$, $6091$, $8237$ & $349$ & $233$ & \ \ \\
\hline
$30$ & $37$, $43$, $49$, $67$, $71$, $73$, $79$, $101$, $109$, $131$, $139$, $191$, $223$, $251$, $256$, $281$, $311$, $401$, $431$, $461$, $491$, $601$, $631$, $641$, $691$, $751$, $811$, $911$, $961$, $991$, $1021$, $1051$, $1171$, $1201$, $1231$, $1291$, $1321$, $1381$, $1471$, $1531$, $1621$, $1681$, $1741$, $1801$, $1831$, $1861$, $1951$, $2011$, $2131$, $2161$, $2221$, $2251$, $2281$, $2311$, $2341$, $2371$, $2401$, $2521$, $2551$, $2671$, $2731$, $2791$, $2851$, $2971$, $3001$, $3061$, $3121$, $3181$, $3271$, $3301$, $3331$, $3361$, $3391$, $3511$, $3541$, $3571$, $3631$, $3691$, $3721$, $3931$, $4021$, $4051$, $4111$, $4201$, $4261$, $4441$, $4561$, $4831$, $4861$ & $19$, $41$, $271$, $331$, $361$, $421$, $541$, $571$, $661$ & $211$, $241$ & $7$, $13$, $151$ \\
\hline
$31$ & $683$, $1024$, $1117$, $1303$, $1427$, $1489$, $1613$, $1861$, $2357$, $2543$, $2729$, $2791$, $3163$, $3659$, $3907$, $4093$, $4217$, $4651$, $5023$, $5147$, $5209$, $5519$, $5581$, $5953$, $7193$ & $311$ & $373$ & \ \ \\
\hline
$32$ & $13$, $29$, $73$, $89$, $121$, $137$, $233$, $241$, $337$, $401$, $433$, $529$, $593$, $625$, $641$, $673$, $761$, $881$, $929$, $977$, $1009$, $1153$, $1201$, $1217$, $1249$, $1297$, $1409$, $1489$, $1601$, $1697$, $1889$, $2017$, $2081$, $2113$, $2209$, $2273$, $2401$, $2593$, $2657$, $2689$, $2753$, $3041$, $3137$, $3169$, $3329$, $3361$, $3457$, $3617$, $4001$, $4129$, $4289$, $4513$, $4673$, $4801$, $4993$ & $5$, $25$, $41$, $81$, $113$, $289$, $353$, $449$, $577$, $769$ & \ \ & $257$ \\
\hline
$33$ & $7$, $353$, $419$, $617$, $859$, $991$, $1123$, $1321$, $1453$, $1783$, $1849$, $2113$, $2179$, $2311$, $2377$, $2707$, $2971$, $3037$, $3169$, $3301$, $3433$, $3499$, $3631$, $3697$, $4027$, $4093$, $4159$, $4357$, $4423$, $4489$, $4621$, $4951$, $5281$, $5347$, $5413$, $5479$, $5743$, $6007$, $6073$, $6271$, $6337$, $6469$, $6997$, $7723$, $7789$, $7921$, $8317$, $8647$, $9439$ & $89$, $397$, $463$, $529$, $661$, $727$ & $331$ & \ \ \\
\hline
$34$ & $919$, $1021$, $1123$, $1259$, $1327$, $1361$, $1429$, $1531$, $1667$, $1871$, $1973$, $2143$, $2347$, $2381$, $2551$, $2687$, $2789$, $2857$, $3061$, $3163$, $3299$, $3469$, $3571$, $3673$, $3877$, $3911$, $4013$, $4217$, $4421$, $4523$, $4591$, $4931$, $4999$, $5101$, $5237$, $5407$, $5441$, $5849$ & $443$, $613$, $647$, $953$ & $307$, $409$ & $239$ \\
\hline
$35$ & $31$, $41$, $61$, $64$, $113$, $127$, $701$, $841$, $911$, $1051$, $1331$, $1471$, $1681$, $2311$, $2381$, $2521$, $2591$, $2731$, $2801$, $3011$, $3221$, $3361$, $3571$, $3851$, $4096$, $4201$, $4271$, $4481$, $4621$, $4691$, $4831$, $5041$, $5531$, $5741$, $5881$, $6301$, $6581$, $6791$, $7001$, $7351$, $7561$, $8191$, $8681$, $8821$ & $29$, $43$, $421$, $491$, $631$ & $11$, $281$ & \ \ \\
\hline
$36$ & $17$, $29$, $43$, $67$, $79$, $139$, $157$, $169$, $193$, $223$, $241$, $277$, $307$, $313$, $337$, $343$, $349$, $373$, $379$, $409$, $421$, $457$, $487$, $523$, $625$, $631$, $661$, $673$, $733$, $739$, $811$, $829$, $841$, $877$, $883$, $919$, $937$, $991$, $1009$, $1063$, $1069$, $1153$, $1171$, $1279$, $1297$, $1369$, $1423$, $1453$, $1459$, $1549$, $1621$, $1657$, $1669$, $1693$, $1741$, $1747$, $1801$, $1873$, $2017$, $2053$, $2089$, $2161$, $2197$, $2251$, $2269$, $2287$, $2341$, $2377$, $2503$, $2521$, $2557$, $2593$, $2719$, $2809$, $2917$, $2953$, $3061$, $3169$, $3259$, $3313$, $3457$, $3529$, $3637$, $3673$, $3709$, $3853$, $3889$, $4177$, $4357$, $4519$, $4789$, $4861$, $4933$, $4969$ & $5$, $31$, $61$, $97$, $163$, $229$, $271$, $433$, $541$, $577$, $613$, $757$, $1117$ & $199$, $361$, $397$ & $7$, $49$, $127$ \\
\hline
$37$ & $1259$, $1481$, $1777$, $1999$, $2221$, $2591$, $2887$, $3109$, $3257$, $3331$, $3701$, $3923$, $4219$, $4441$, $4663$, $5107$, $5477$, $6143$, $6217$, $6661$, $7253$, $7549$, $7919$, $7993$, $8363$, $8807$, $9103$, $9473$ & $593$ & \ \ & \ \  \\
\hline
\end{longtable}

\begin{thm} \label{gamma bigger than or equal to 7-thm}
For $4 \leqslant k \leqslant 37$, the following is the list of all of the fields with size $q$ such that $ \gamma (k,q) \geqslant 7:$
\begin{itemize}
	\item $\gamma(8,17)=8$ \vspace{-0.3cm}
	\item $\gamma(9,19)=9$ \vspace{-0.3cm}
	\item $ \gamma(10,11)=10$ \vspace{-0.3cm}
	\item $\gamma(11,23)=11$ \vspace{-0.3cm}
	\item $\gamma(12,13)=12$ \vspace{-0.3cm}
	\item $ \gamma(14,29)=14 $ \vspace{-0.3cm}
	\item $ \gamma(15,31)=15 $ \vspace{-0.3cm}
	\item $ \gamma(16,17)=16 $ \vspace{-0.3cm}
	\item $ \gamma(18,73)=7$, \  $ \gamma(18,19)=18$,\  $ \gamma(18,37)=18 $ \vspace{-0.3cm}
	\item $ \gamma(20,121)=7 $, \ $ \gamma(20,61)=8 $, \ $ \gamma(20,11)=10 $, \ $ \gamma(20,41)=20 $ \vspace{-0.3cm}
	\item $ \gamma(21,43)=21 $ \vspace{-0.3cm}
	\item $ \gamma(22,89)=7 $, \ $ \gamma(22,67)=8 $, \ $ \gamma(22,23)=22 $ \vspace{-0.3cm}
	\item $ \gamma(23,139)=7 $, \ $ \gamma(23,47)=23 $ \vspace{-0.3cm}
	\item $ \gamma(24,17)=8 $, \ $ \gamma(24,73)=8 $, \ $ \gamma(24,97)=8 $, \ $ \gamma(24,13)=12 $ \vspace{-0.3cm}
	\item $ \gamma(25,151)=7 $, \ $ \gamma(25,101)=9 $ \vspace{-0.3cm}
	\item $ \gamma(26,157)=8 $, \ $ \gamma(26,79)=9 $, \ $ \gamma(26,53)=26 $ \vspace{-0.3cm}
	\item $ \gamma(27,163)=8 $, \ $ \gamma(27,19)=9 $, \ $ \gamma(27,109)=9 $ \vspace{-0.3cm}
	\item $ \gamma(28,113)=7 $, \ $ \gamma(28,29)=28 $ \vspace{-0.3cm}
	\item $ \gamma(29,59)=29 $ \vspace{-0.3cm}
	\item $ \gamma(30,181)=8 $, \ $ \gamma(30,11)=10 $, \ $ \gamma(30,121)=10 $, \ $ \gamma(30,31)=30 $, \ $\gamma(30,61)=30 $ \vspace{-0.3cm}
	\item $ \gamma(32,193)=8 $, \ $ \gamma(32,97)=10 $, \ $ \gamma(32,17)=16 $ \vspace{-0.3cm}
	\item $ \gamma(33,199)=9 $, \ $ \gamma(33,23)=11 $, \ $ \gamma(33,67)=33 $ \vspace{-0.3cm}
	\item $ \gamma(34,103)=10 $, \ $ \gamma(34,137)=10 $ \vspace{-0.3cm}
	\item $ \gamma(35,211)=9 $, \ $ \gamma(35,71)=35 $ \vspace{-0.3cm}
	\item $ \gamma(36,181)=7 $, \ $ \gamma(36,109)=11 $, \ $ \gamma(36,13)=12 $, \ $ \gamma(36,19)=18 $, \ $ \gamma(36,37)=36 $, \\
$ \gamma(36,73)=36 $ \vspace{-0.3cm}
	\item $ \gamma(37,149)=9 $, \ $ \gamma(37,223)=9 $.
\end{itemize}
\end{thm}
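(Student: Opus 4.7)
The plan is to reduce the claim to a finite computer calculation. By Theorem~\ref{gamma3} (or the cited bound from \cite{Winterhof1} that $q > (k-1)^3$ implies $\Bbb F_q \subseteq 3R_k$), any pair $(k,q)$ with $\gamma(k,q) \geq 4$ must satisfy $q \leq (k-1)^3$. In particular, any pair with $\gamma(k,q) \geq 7$ lies in the finite rectangle $4 \leq k \leq 37$, $q \leq 36^3$, restricted further to prime powers $q$. This confines the verification to a bounded, enumerable search space.

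Next, I would sieve this rectangle in two steps. First, eliminate uncoverable fields: by Proposition~\ref{table1-prop} and Table~\ref{table1} we already know for which $(k,q)$ no finite $\gamma(k,q)$ exists, and these are simply discarded. Second, for every remaining pair $(k,q)$ in the rectangle, compute $\gamma(k,q)$ directly by the following greedy procedure. Fix a generator $\gamma$ of $\Bbb F_q^\ast$ and form $R_k^\ast = \langle \gamma^{\gcd(k,q-1)} \rangle$; then $R_k = R_k^\ast \cup \{0\}$. Iteratively compute the sumsets
\begin{equation*}
R_k \subsetneq 2R_k \subsetneq 3R_k \subsetneq \cdots \subseteq \Bbb F_q,
\end{equation*}
stopping at the first $m$ with $mR_k = \Bbb F_q$, which is then $\gamma(k,q)$. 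Since the field is coverable, this chain stabilizes in finitely many steps by Proposition~\ref{gamma1 case}, and indeed in at most $k$ steps. Collecting the output $(k,q,\gamma(k,q))$ whenever $\gamma(k,q) \geq 7$ yields exactly the list in the statement.

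The main obstacle is not mathematical but computational: for $k$ near $37$ the cutoff $q \leq (k-1)^3$ gives on the order of $46{,}000$ prime powers to test, and within each field the sumset $mR_k$ can have size up to $q$, so the naive procedure is $O(k \cdot q^2)$ per field. This is why the author enlists a supercomputer; in practice one speeds the inner loop up by representing $mR_k$ as a bit-vector of length $q$ and implementing the sumset by bitwise shifts indexed by $R_k$. No step requires ingenuity beyond the reductions above, so once the code is correct the theorem is established by inspection of the output against the list, and the finitely many pairs with $\gamma(k,q) \geq 7$ are precisely those enumerated.
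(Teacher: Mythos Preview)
Your proposal is correct and matches the paper's approach: the theorem is not given a formal proof in the body of the paper but is established by the computer search whose Sage code appears in Appendix~B, which likewise bounds the search space by a theoretical cutoff (the code uses $8k^4$ rather than your tighter $(k-1)^3$, but either suffices) and then determines $\gamma(k,q)$ by iteratively building the sumsets $R_k, 2R_k, 3R_k, \ldots$ until the whole field is covered. The only cosmetic difference is that the paper's code enumerates $k^{\text{th}}$ powers directly as $\{j^k : j \in \Bbb F_q\}$ rather than via a generator of $\Bbb F_q^\ast$, but this is immaterial.
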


Let $k$ be any positive integer. Then, we define $\gamma(k):= \max \gamma(k,q)$ where we take the maximum value of $\gamma(k,q)$ over the set $\{ q \mid \gamma(k,q) \text{ exists} \}$. Hence, if a finite field is coverable with the $k^{\text{\tiny th}}$ powers, every element of the field can be written as a sum of $\gamma(k)$ many $k^{\text{\tiny th}}$ powers. By Proposition~\ref{gamma1 case} we have $\gamma(k) \leqslant k$. The following proposition gives us the exact values of $\gamma(k)$ for $k \leqslant 37$:

\begin{prop} 
Let $k \leqslant 37$. If $\gamma(k)$ is not provided in Table~\ref{table5}, that means $\gamma(k)=k$.
\end{prop}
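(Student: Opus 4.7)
The plan is to reduce the computation of $\gamma(k)$ for each $k \leqslant 37$ to a finite case check and then read the answer off the tables already compiled. By Theorem~\ref{Main Theorem}, for every $k$ and every coverable $\Bbb F_q$ with $q > k^4$ one has $\gamma(k,q) \leqslant 2$. For $k \geqslant 2$ one also has $\gamma(k) \geqslant 2$, since any prime $p \equiv 1 \pmod{k}$ with $p > k$ is coverable and satisfies $\gcd(k,p-1) = k > 1$, so by Proposition~\ref{gamma1 case} gives $\gamma(k,p) \geqslant 2$. Therefore
\[
\gamma(k) \;=\; \max\bigl\{\gamma(k,q) : q \text{ a prime power, } q \leqslant k^4,\ \Bbb F_q \text{ coverable by } k^{\text{th}} \text{ powers}\bigr\},
\]
and Proposition~\ref{gamma1 case} bounds every term on the right by $k$.

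For each pair $(k,q)$ in this finite range I would then classify it via the earlier results: Table~\ref{table1} removes the uncoverable $\Bbb F_q$ from consideration; Proposition~\ref{gamma1 case} selects the pairs with $\gamma(k,q) = 1$; Tables~\ref{table2} and~\ref{table3} exhaustively list every coverable pair with $\gamma(k,q) \in \{3,4,5,6\}$, plus enough of the $\gamma(k,q) = 3$ pairs to cover $q \leqslant (k-1)^3$ (beyond which $\gamma(k,q) \leqslant 3$ by the Winterhof-type bound quoted after Theorem~\ref{table3-thm}); and Theorem~\ref{gamma bigger than or equal to 7-thm} exhaustively lists every coverable pair with $\gamma(k,q) \geqslant 7$. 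Any coverable pair $(k,q)$ not appearing on these lists must therefore have $\gamma(k,q) \leqslant 2$.

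With these tools in place, for each fixed $k \leqslant 37$ the value $\gamma(k)$ is exactly the largest entry $\gamma(k,q)$ that appears in the corresponding $k$-row of Tables~\ref{table2} or~\ref{table3} or of Theorem~\ref{gamma bigger than or equal to 7-thm} (and $\gamma(k) = 2$ if no such entry exists). The proposition then follows from a straightforward row-by-row inspection: if the largest such entry equals $k$, then $\gamma(k) = k$, and this is precisely the case when $k$ does \emph{not} appear in Table~\ref{table5}; otherwise the largest entry equals the exceptional value displayed in Table~\ref{table5} for that $k$, and $\gamma(k)$ takes that value.

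The only real obstacle is trusting the computer-verified exhaustive enumerations behind Tables~\ref{table2}--\ref{table3} and Theorem~\ref{gamma bigger than or equal to 7-thm}; once these are accepted, the proposition is an immediate corollary of Theorem~\ref{Main Theorem} and a mechanical comparison against $k$.
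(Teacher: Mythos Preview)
The paper gives no explicit proof of this proposition; it is stated immediately after the tables and Theorem~\ref{gamma bigger than or equal to 7-thm} and is meant to be read off directly from that data. Your proposal is correct and is precisely the intended derivation: bound the search to $q\leqslant k^4$ via Theorem~\ref{Main Theorem}, then take the maximum of the tabulated values of $\gamma(k,q)$ for each $k$. One cosmetic point: your displayed identity for $\gamma(k)$ as a maximum over $q\leqslant k^4$ is literally correct only once you know that maximum is at least $2$; you do argue $\gamma(k)\geqslant 2$ for $k\geqslant 2$, but the witness prime $p\equiv 1\pmod{k}$ need not itself satisfy $p\leqslant k^4$, so strictly speaking the displayed formula should be $\gamma(k)=\max\bigl(2,\ \max_{q\leqslant k^4}\gamma(k,q)\bigr)$. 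In the range $k\leqslant 37$ this is moot, since every row of the tables already contains entries with $\gamma(k,q)\geqslant 3$. Also note that Tables~\ref{table2}--\ref{table3} begin at $k=4$; the cases $k=1,2,3$ (for which $\gamma(k)=k$) are handled by the discussion at the start of Section~\ref{fields section} rather than by the tables, so your row-by-row inspection should invoke that paragraph for those three values.
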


\begin{table}[h!]
\centering
\caption{$\gamma(k)$ values}
\begin{tabu}  {| p{2.5cm} | p{2.5cm} | p{2.5cm} | p{2.5cm} | p{2.5cm} | p{2.5cm} | }
\hline
$\gamma(7)=4$ & $\gamma(13)=6$ & $\gamma(17)=6$ & $\gamma(19)=4$ & $\gamma(24)=12$ & $\gamma(25)=9$  \\
\hline
$\gamma(27)=9$ & $\gamma(31)=5$ & $\gamma(32)=16$ & $\gamma(34)=10$ & $\gamma(37)=9$ & \\
\hline
\end{tabu}	 \newline
\label{table5}
\end{table} 

\section{Waring's Problem in Matrix Rings} \label{matrix section}

Let $\Bbb F_q$ be the finite field with $q$ elements and let $p$ denote the characteristic of $\Bbb F_q$. Also let $k$ be a positive integer and $\operatorname{Mat}_n(\Bbb F_q)$ be the ring of $n \times n$ matrices over $\Bbb F_q$. In this section we want to find an integer $m$ with the property that every element of $\operatorname{Mat}_n(\Bbb F_q)$ can be written as a sum of at most $m$ many $k^{\text{\tiny th}}$ powers in $\operatorname{Mat}_n(\Bbb F_q)$. First, we need to develop our machinery. Take any $A \in \operatorname{Mat}_n(\Bbb F_q)$. Let $P(x) \in \Bbb F_q[x]$ be any polynomial over $\Bbb F_q$, then $P(x) = a_{0}+a_{1}x+\cdots+a_{l}x^{l}$ for some $a_0, a_1 \cdots a_l \in \Bbb F_q$. We can evaluate any polynomial $P(x)$ at $A$; for example here $P(A) =a_{0}I+a_{1}A+\cdots+a_{l}A^{l}$ where $I$ denotes the $n \times n$ identity matrix over $\Bbb F_q$. Then, we define $\Bbb F_q [A] = \{P(A) \mid P(x) \in \Bbb F_q[x]\}$. We want to find an $m$ such that $A=B_1^k+ \cdots +B_m^k$ for some $B_1, \cdots, B_m \in \operatorname{Mat}_n(\Bbb F_q)$. Instead of working with $\operatorname{Mat}_n(\Bbb F_q)$ we will work with $\Bbb F_q[A]$, meaning we will try to write $A$ as a sum of $k^{\text{\tiny th}}$ powers using the elements of $\Bbb F_q[A]$. This is more convenient since we have more tools to use in $\Bbb F_q[A]$, but we are only able to provide upper bounds for $m$ with this approach, rather than exact values.
Define
\begin{align*} 
\varphi: \Bbb F_q[x] &\longrightarrow \Bbb F_q[A]\\
a_0+a_{1}x+\cdots+a_{l}x^{l} &\longmapsto a_{0}I+a_{1}A+\cdots+a_{l}A^{l}.
\end{align*}
Notice that \[ \ker \varphi = \left\{ P(x) \in \Bbb F_q[x] \text{ s.t. } P(A)=0_{n \times n} \right\}=\langle m_{A}(x) \rangle \] where $m_{A}(x)$ denotes the minimum polynomial of $A$, and $\operatorname{Im} \varphi =\Bbb F_q[A]$. By first isomorphism theorem we have \[ \bigslant{\Bbb F_q[x]}{\langle m_{A}(x) \rangle} \cong \Bbb F_q[A].\]

Let's assume $m_{A}(x) = p_{1}(x)^{i_{1}} \cdots p_{j}(x)^{i_{j}}$ for some $ p_1,p_2, \cdots, p_j$ distinct irreducible polynomials over  $\Bbb F_q$. Using Chinese remainder theorem we have \[ \bigslant{\Bbb F_q[x]}{\langle p_1^{i_1} \cdots p_j^{i_j}\rangle} \cong \bigslant{\Bbb F_q[x]}{\langle p_1^{i_1} \rangle}\times \cdots \times \bigslant{\Bbb F_q[x]}{\langle p_j^{i_j} \rangle}.\] We need one more tool before we state the main result of the section.

\begin{prop} \label{fancy Henseling}
Let $f(x)$ be an irreducible polynomial over $\Bbb F_q$ with degree $n$. Let $i$ and $k$ be positive integers such that $p \nmid k$ where $p$ denotes the characteristic of $\Bbb F_q$.
\vspace{-0.3cm} 
\begin{itemize}
\item If every element of $\bigslant{\Bbb F_q[x]}{\langle f(x) \rangle}$ is a $k^{\text{\tiny th}}$ power, then the elements of $\bigslant{\Bbb F_q[x]}{\langle f^i(x) \rangle}$ can be written as a sum of two $k^{\text{\tiny th}}$ powers. 
\vspace{-0.3cm} 
\item Assume every element of $\bigslant{\Bbb F_q[x]}{\langle f(x) \rangle}$ can be written as a sum of $m$ many $k^{\text{\tiny th}}$ powers for some $m > 1$. If $-1$ is a $k^{\text{\tiny th}}$ power in $\Bbb F_q$, then the elements of $\bigslant{\Bbb F_q[x]}{\langle f^i(x) \rangle}$ can be written as a sum of $m$ many $k^{\text{\tiny th}}$ powers. If $-1$ is not a $k^{\text{\tiny th}}$ power in $\Bbb F_q$, then the elements of $\bigslant{\Bbb F_q[x]}{\langle f^i(x) \rangle}$ can be written as a sum of $m+1$ many $k^{\text{\tiny th}}$ powers.
\end{itemize}
\end{prop}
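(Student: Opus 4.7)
The plan is to reduce modulo $f$, invoke the given hypothesis over the residue field $\mathbb{F}_q[x]/\langle f \rangle \cong \mathbb{F}_{q^n}$, and then Hensel-lift the resulting approximate decomposition up to $\mathbb{F}_q[x]/\langle f^i \rangle$ via Proposition~\ref{Hensel for poly-strong version} applied to $Q(t) = t^k - \gamma$ for an appropriate target $\gamma$. Because $p \nmid k$, the derivative $Q'(t) = kt^{k-1}$ satisfies $\nu(Q'(g)) = 0$ whenever $g$ is a unit modulo $f$, so Hensel's hypothesis $n > 2m$ reduces to $n \geqslant 1$ with $m = 0$; iterating one Newton step at a time (part~\eqref{item3} of that proposition guarantees $\nu(Q'(g_2))$ is preserved, so iteration works) lifts any such initial root from modulo $f$ all the way up to modulo $f^i$. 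The entire problem therefore reduces to producing, modulo $f$, a decomposition with the allowed number of summands in which at least one summand is a unit modulo $f$.

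For the first assertion, given $\alpha \in \mathbb{F}_q[x]$, I split on whether $f \mid \alpha$. If $f \nmid \alpha$, the hypothesis gives $\alpha \equiv u^k \pmod{f}$ with $u$ automatically a unit modulo $f$; Hensel lifts $u$ to $\tilde u$ with $\tilde u^k \equiv \alpha \pmod{f^i}$, and one summand (hence two, by adjoining $0^k$) suffices. If $f \mid \alpha$, I pick any $h$ with $f \nmid h$ and apply the hypothesis to $\alpha - h^k \equiv -h^k \not\equiv 0 \pmod{f}$ to obtain $u$ (a unit) with $u^k \equiv \alpha - h^k \pmod{f}$; Hensel lifts $u$ to $\tilde u$, and $\alpha \equiv h^k + \tilde u^k \pmod{f^i}$ is a sum of two $k^{\text{\tiny th}}$ powers.

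For the second assertion, the case $f \nmid \alpha$ is immediate in the same style: the hypothesized $\alpha \equiv \sum_{j=1}^{m} u_j^k \pmod{f}$ has a nonzero right-hand side, so some $u_{j_0}$ is a unit, and Hensel lifts only $u_{j_0}$ while fixing the remaining $u_j$'s, producing $m$ summands modulo $f^i$. When $f \mid \alpha$, the dichotomy about $-1$ becomes essential. If $-1 = \delta^k$ in $\mathbb{F}_q$, then for any $h$ with $f \nmid h$ one has $h^k + (\delta h)^k = 0$, so (using $m \geqslant 2$) I write $\alpha \equiv h^k + (\delta h)^k + 0^k + \cdots + 0^k \pmod{f}$, an $m$-term decomposition whose first summand is a unit; Hensel-lifting $h$ to $\tilde h$ while fixing $(\delta h)^k$ and the zeros yields $\alpha$ as a sum of $m$ many $k^{\text{\tiny th}}$ powers modulo $f^i$. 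If $-1$ is not a $k^{\text{\tiny th}}$ power in $\mathbb{F}_q$, this cancellation trick is unavailable, so I instead shift: pick $h$ with $f \nmid h$, apply the nonzero case to $\alpha - h^k$ to get $m$ summands, and reintroduce $h^k$ for a total of $m + 1$.

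The main obstacle is the case $f \mid \alpha$: the naive decomposition of $\alpha$ modulo $f$ uses only summands $\equiv 0 \pmod{f}$, on which Hensel has nothing to lift. Introducing a unit summand of the form $h^k$ breaks this deadlock, and when $-1$ is a $k^{\text{\tiny th}}$ power the trivial cancellation $h^k + (\delta h)^k = 0$ allows this without enlarging the term count; without that cancellation one appears to genuinely pay the price of one extra summand, matching the two cases recorded in the statement.
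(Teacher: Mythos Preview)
Your proposal is correct and follows essentially the same route as the paper's proof: reduce modulo $f$, invoke the hypothesis over the residue field, arrange that at least one summand is a unit modulo $f$ (so that $Q'(t)=kt^{k-1}$ has $f$-adic valuation zero at that summand), and then iterate Proposition~\ref{Hensel for poly-strong version} to lift to $\bmod\ f^i$. The paper organizes the ``zero case'' slightly differently---it reasons about which $B_i$ are zero modulo $f$ and, when all are, replaces two of them by $1$ and a $k^{\text{\tiny th}}$ root of $-1$---whereas you split directly on $f\mid\alpha$ versus $f\nmid\alpha$ and shift by $h^k$; but these are cosmetic variations on the same idea, and your treatment of the first bullet (shifting by $h^k$ and applying the hypothesis to $\alpha-h^k$) is arguably a touch cleaner than the paper's, since it avoids any separate discussion of whether $-1$ is a $k^{\text{\tiny th}}$ power in that case.
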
 

\begin{proof} 
We denote $f(x)$ with $f$ in this proof. Take any element from $\bigslant{\Bbb F_q[x]}{\langle f^2 \rangle}$. Using division algorithm we can write an equivalence class representative of this element as $r_1+r_2f$ where $r_1,r_2 \in \Bbb F_q[x]$ and $\deg(r_1), \deg(r_2)< n$. It follows from the assumption that there exist some $B_1, B_2, \cdots, B_m \in \Bbb F_q[x]$ such that $r_1 \equiv B_1^k+B_2^k+ \cdots +B_m^k \pmod{f}$.

If some of $B_i $'s are zeros modulo $f$, we can erase them from the list of $ B_i $'s, and use the ones which are not zero modulo $f$ for the next step. We can erase them, because if $B_i \equiv 0  \pmod{f}$, $B_i^k \equiv 0  \pmod{f}$ so the summation of the $k^{\text{\tiny th}}$ powers of the reduced list will still be $r_1$. If all of the $B_i $'s are zeros modulo $f$, then we need to be a little bit careful. If $k$ is odd, or if $k$ is even and $-1$ is a $k^{\text{\tiny th}}$ power in $\Bbb F_q$, simply take $B_{i_1}=1$ and $B_{i_2}=-1$ and proceed to the next step. If $k$ is even and $-1$ is not a $k^{\text{\tiny th}}$ power in $\Bbb F_q$, then we can write $0 \equiv B_{i_1}^k+ (-1)  \pmod{f}$ for $B_{i_1}=1$, and we know $-1$ can be written as a sum of $m$ many $k^{\text{\tiny th}}$ powers by assumption. So we can assume $B_{i_1}, \cdots, B_{i_j} $ are all nonzero in $\bmod{f}$ and their summation is equivalent to $r_1$ in $\bmod{f}$.

Define the polynomial $Q(t)=t^k+B_{i_1}^k+  \cdots +B_{i_{j-1}}^k-r_1-r_2f$ over $\Bbb F_q[x]$, use Hensel's lemma for the polynomial rings (Proposition~\ref{Hensel for poly-strong version}) for $Q(t)$ to conclude there exists a $g_2 \in \Bbb F_q[x]$ such that $ g_2^k+B_{i_1}^k+ \cdots+ B_{i_{j-1}}^k= r_1+r_2f \pmod{f^2}$ and the result for $i=2$ follows. Here, we are allowed to use Hensel's lemma, since $B_{i_j}$ is a root of $Q(t)$ in $\bmod{f}$ and $\nu (Q'(B_{i_j}))=0 < \frac{1}{2}$. $\nu (Q'(B_{i_j}))=0$ since $Q'(t)=k t^{k-1}$ and $Q'(B_{i_j})= k B_{i_j}^{k-1} \not\equiv 0 \pmod{f}$. This follows from $ f \nmid k$, $ p \nmid k $, $f \nmid B_{i_j}$ and $f$ being irreducible.

Once we lift a solution from $\bigslant{\Bbb F_q[x]}{\langle f(x) \rangle}$ to $\bigslant{\Bbb F_q[x]}{\langle f^2(x) \rangle}$, it is easy. Third condition in Proposition~\ref{Hensel for poly-strong version} allows us to apply Hensel's lemma again and again, and get a solution in each $\bigslant{\Bbb F_q[x]}{\langle f^i(x) \rangle}$ for any $i \in \mathbb{Z}_{+}$.
\end{proof}

Using this proposition, the discussion right before the proposition and finite field tables in Section~\ref{fields section} we literally have dozens of nice results for the matrix rings, but it would be a waste of paper to write out all of them. That is why we will state the results only for some small $k$ values to get across our point to the reader in the following theorem. Note that we denote with $R_{k,n}$ the set of all $k^{\text{\tiny th}}$ powers in $\operatorname{Mat}_n(\Bbb F_q)$ i.e. $R_{k,n}= \left\{ M^k \ | \ M \in \operatorname{Mat}_n(\Bbb F_q) \right\}$. Also, $\operatorname{Mat}_n(\Bbb F_q) \subseteq m R_{k,n}$ means every element of $\operatorname{Mat}_n(\Bbb F_q)$ can be written as a sum of $m$ many $k^{\text{\tiny th}}$ powers.

\begin{thm}\label{matrixTable} 
In the following table in a fixed row, let $k$ be a positive integer as in the first column, and let $\Bbb F_q$ be the finite field with $q$ elements where $q$ is different than the values in the second column. Then $\operatorname{Mat}_n(\Bbb F_q) \subseteq m R_{k,n}$ for any $n$. In fact, in this case any $A \in \operatorname{Mat}_n(\Bbb F_q)$ has $A=B_{1}^k+B_{2}^k+\cdots+B_{m}^k$ with $B_{1}$, $B_{2}, \cdots, B_{m}$ in the subring generated by $A$. (For example the first row should be read as "If $|\Bbb F_q| \neq 2,4 $, or any power of $3$, then $ \operatorname{Mat}_n(\Bbb F_q) \subseteq 3 R_{3,n}$ for any $n$. In fact, in this case any $A \in \operatorname{Mat}_n(\Bbb F_q)$ has $A=B_{1}^3+B_{2}^3+B_{3}^3$ with $B_{1}$, $B_{2}$, $B_{3}$ in the subring generated by $A$.")
\end{thm}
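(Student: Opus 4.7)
The plan is to reduce matrix Waring's problem to the finite field case via the Artin--Wedderburn / CRT machinery developed earlier in the paper, then to clean up the residues modulo prime powers using Proposition~\ref{fancy Henseling}. Concretely, given $A \in \operatorname{Mat}_n(\Bbb F_q)$, I would work inside the commutative subring $\Bbb F_q[A]$ which the paper already identifies with $\Bbb F_q[x]/\langle m_A(x)\rangle$. Factor $m_A(x) = p_1(x)^{i_1} \cdots p_j(x)^{i_j}$ into distinct irreducibles and apply the Chinese remainder theorem to get
\[
\Bbb F_q[A] \;\cong\; \prod_{r=1}^{j} \Bbb F_q[x]/\langle p_r(x)^{i_r}\rangle.
\]
A sum decomposition of $A$ in each factor lifts to a sum decomposition of $A$ in $\Bbb F_q[A]$ (padding with $0 = 0^k$ so that every factor uses the same number of summands), and since all the $B_i$ are constructed inside $\Bbb F_q[A]$, the "in fact" clause of the statement comes for free.

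Next, I would fix an irreducible factor $p_r(x)$ of degree $d_r \leq n$. The quotient $\Bbb F_q[x]/\langle p_r(x)\rangle$ is the finite field $\Bbb F_{q^{d_r}}$, so Waring's problem in this residue field is controlled by $\gamma(k, q^{d_r})$, whose values come from Theorems~\ref{Main Theorem}, \ref{gamma3}, \ref{table2-thm}, \ref{table3-thm} and the tables of Section~\ref{fields section}. Once an element of $\Bbb F_q[x]/\langle p_r(x)\rangle$ is written as a sum of $m_0$ many $k^{\text{th}}$ powers, Proposition~\ref{fancy Henseling} upgrades this (provided $p \nmid k$) to a sum of $m_0$ many $k^{\text{th}}$ powers in $\Bbb F_q[x]/\langle p_r(x)^{i_r}\rangle$ when $m_0 = 1$ gives $2$ summands, and generally $m_0$ summands if $-1 \in R_k$ in $\Bbb F_q$, else $m_0 + 1$ summands. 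Taking the maximum of these bounds over all irreducible factors $p_r$ of $m_A(x)$, and over all possible $d_r$ from $1$ up to $n$, gives the uniform value $m$ appearing in the table.

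For each row of the table, the excluded values of $q$ are then exactly the union of three sources: (i) the characteristic constraint $p \nmid k$, imposed because Proposition~\ref{fancy Henseling} needs $kB^{k-1}$ to be invertible mod $f$; (ii) the orders $q^d$ which appear as uncoverable fields for $k^{\text{th}}$ powers (Table~\ref{table1}), since those cannot be Waring-represented at all; and (iii) the orders $q^d$ for which $\gamma(k, q^d)$ is larger than the target $m$ (or $m-1$ when $-1$ is not a $k^{\text{th}}$ power in $\Bbb F_q$), as read off from Tables~\ref{table2}, \ref{table3} and Theorem~\ref{gamma bigger than or equal to 7-thm}. Observe that because $q^d$ divides $|\operatorname{Mat}_n(\Bbb F_q)| = q^{n^2}$, excluding \emph{all} extensions $\Bbb F_{q^d}$ with bad behavior amounts to excluding the base values of $q$ listed in the table; no further exceptions arise as $n$ grows, since any obstruction at level $d$ already shows up at $q$.

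The main obstacle is not conceptual but bookkeeping: for each small $k$ one must check that the finitely many intermediate fields $\Bbb F_{q^d}$ (for all $d$ that could possibly occur as $\deg p_r \leq n$) satisfy the required bound $\gamma(k, q^d) \leq m$ uniformly in $d$. The asymptotic theorems (Theorems~\ref{Main Theorem}, \ref{gamma3} and Winterhof's bound) handle all but finitely many $d$ at once, reducing the check to the explicit entries in Tables~\ref{table2} and \ref{table3}, together with the $-1$-is-a-$k^{\text{th}}$-power test that governs whether Proposition~\ref{fancy Henseling} costs an extra summand. A careful enumeration then justifies the tabulated exceptions row-by-row.
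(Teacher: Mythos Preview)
Your proposal is correct and follows essentially the same route as the paper: pass to the commutative ring $\Bbb F_q[A]\cong\prod_r \Bbb F_q[x]/\langle p_r^{i_r}\rangle$ via CRT, control each residue field $\Bbb F_{q^{d_r}}$ using the finite-field tables of Section~\ref{fields section}, lift with Proposition~\ref{fancy Henseling} (requiring $p\nmid k$ and possibly costing one extra summand when $-1\notin R_k$), and then assemble the excluded $q$ row-by-row. The paper's proof only spells out the first row explicitly and leaves the rest as the same bookkeeping, so your write-up is if anything more thorough. One small remark: the sentence invoking ``$q^d$ divides $|\operatorname{Mat}_n(\Bbb F_q)|=q^{n^2}$'' is irrelevant to the argument and can be dropped---the point is simply that the residue fields arising from $\Bbb F_q[A]$ are exactly the extensions $\Bbb F_{q^d}$, so one excludes those $q$ that sit inside a bad field.
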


\begin{longtable}{ | p{1.8cm} | p{12cm} | p{1.8cm} | }
\hline
\boldmath$k=$ &  \boldmath$q \neq$ & \boldmath$m=$\\
\hline \hline
\multirow{2}{*}{} $3$ & $2$, $4$, $3^s$ & $3$ \\ \cline{2-2} \cline{3-3} 
                  	         & $2$, $4$, $7$, $3^s$ &   $2$ \\ \cline{2-2} \cline{3-3} \hline \hline 
\multirow{3}{*}{} $4$ & $3$, $9$, $2^s$ &   $5$ \\ \cline{2-2} \cline{3-3} 
                  	         & $3$, $5$, $9$, $2^s$ &  $4$ \\ \cline{2-2} \cline{3-3} 
                  	         & $3$, $5$, $9$, $13$, $17$, $25$, $29$, $2^s$ &  $3$ \\ \cline{2-2}  \cline{3-3} \hline \hline
\multirow{3}{*}{} $5$ & $2$, $4$, $16$, $5^s$ & $5$ \\ \cline{2-2} \cline{3-3} 
                  	         & $2$, $4$, $11$, $16$, $5^s$ & $3$ \\ \cline{2-2} \cline{3-3} 
                  	         & $2$, $4$, $11$, $16$, $31$, $41$, $61$, $5^s$ & $2$ \\ \cline{2-2}  \cline{3-3} \hline \hline 	      
\multirow{4}{*}{} $6$ & $2$, $4$, $5$, $25$, $2^s$, $3^s$ & $7$ \\ \cline{2-2} \cline{3-3} 
					    & $2$, $4$, $5$, $7$, $13$, $25$, $2^s$, $3^s$ & $5$ \\ \cline{2-2} \cline{3-3} 
                  	         & $2$, $4$, $5$, $7$, $13$, $19$, $25$, $31$, $2^s$, $3^s$ & $4$ \\ \cline{2-2} \cline{3-3} 
                  	         & $2$, $4$, $5$, $7$, $13$, $19$, $25$, $31$, $37$, $43$, $49$, $61$, $67$, $73$, $79$, $109$, $139$, $223$, $2^s$, $3^s$ & $3$ \\  \cline{2-2}  \cline{3-3} \hline \hline
\multirow{3}{*}{} $7$ & $2$, $8$, $7^s$ & $4$  \\ \cline{2-2} \cline{3-3} 
                  	         & $2$, $8$, $29$, $43$, $7^s$ & $3$ \\ \cline{2-2} \cline{3-3} 
                  	         & $2$, $4$, $8$, $29$, $43$, $64$, $71$, $113$, $127$, $7^s$ & $2$ \\ \cline{2-2}  \cline{3-3} \hline \hline
\multirow{4}{*}{} $8$ & $3$, $7$, $9$, $49$, $2^s$ & $9$ \\ \cline{2-2} \cline{3-3} 
                  	       & $3$, $7$, $9$, $17$, $49$, $2^s$ & $5$ \\ \cline{2-2} \cline{3-3} 
	      			  & $3$, $5$, $7$, $9$, $17$, $25$, $41$, $49$, $2^s$ & $4$ \\ \cline{2-2}  \cline{3-3}
                  	      & $3$, $5$, $7$, $9$, $11$, $13$, $17$, $25$, $29$, $41$, $49$, $73$, $81$, $89$, $97$, $113$, $121$, $137$, $233$, $257$, $289$, $337$, $761$, $2^s$ & $3$ \\ \cline{2-2}  \cline{3-3} \hline \hline 
\multirow{4}{*}{} $9$ & $2$, $4$, $8$, $64$, $3^s$ & $9$  \\ \cline{2-2} \cline{3-3} 
                  	       & $2$, $4$, $8$, $19$, $64$, $3^s$ & $5$ \\ \cline{2-2} \cline{3-3} 
	      			  & $2$, $4$, $8$, $19$, $37$, $64$, $3^s$ & $3$ \\  \cline{2-2} \cline{3-3} 
                  	       & $2$, $4$, $7$, $8$, $19$, $37$, $64$, $73$, $109$, $127$, $163$, $181$, $199$, $271$, $307$, $343$, $3^s$  & $2$ \\ \cline{2-2}  \cline{3-3} \hline \hline      
\multirow{5}{*}{} $10$ & $2$, $3$, $4$, $9$, $16$, $81$, $2^s$, $5^s$ & $11$ \\ \cline{2-2} \cline{3-3} 
                  	           & $2$, $3$, $4$, $9$, $11$, $16$, $81$, $2^s$, $5^s$ & $6$ \\ \cline{2-2} \cline{3-3} 
	      			      & $2$, $3$, $4$, $9$, $11$, $16$, $31$, $81$, $2^s$, $5^s$ & $5$ \\ \cline{2-2} \cline{3-3} 
				 	      & $2$, $3$, $4$, $9$, $11$, $16$, $31$, $41$, $61$, $81$, $2^s$, $5^s$ & $4$ \\ \cline{2-2} \cline{3-3} 
                  	           & $2$, $3$, $4$, $9$, $11$, $16$, $31$, $41$, $61$, $71$, $81$, $101$, $121$, $131$, $151$, $181$, $191$, $211$, $241$, $251$, $271$, $281$, $311$, $331$, $401$, $421$, $431$, $461$, $491$, $641$, $911$, $2^s$, $5^s$ & $3$ \\ \cline{2-2}  \cline{3-3} \hline \hline
\multirow{4}{*}{} $11$ & $23$, $11^s$ & $5$ \\ \cline{2-2} \cline{3-3} 
                  	           & $23$, $67$, $11^s$ & $4$ \\ \cline{2-2} \cline{3-3} 
	      			      & $23$, $67$, $89$, $11^s$ & $3$ \\ \cline{2-2} \cline{3-3} 
				 	      & $23$, $67$, $89$, $199$, $331$, $353$, $419$, $463$, $617$, $11^s$ & $2$ \\ \cline{2-2} \cline{3-3} \hline
\end{longtable}

\begin{proof}
To explain the process, we will only do for the first row. Let $k=3$. By Section~\ref{fields section} we know that every element of a finite field can be written as a sum of three cubes as long as $q \neq 4$. Let $A \in \operatorname{Mat}_n(\Bbb F_q)$. By the discussion prior to Proposition~\ref{fancy Henseling}, we know that $\Bbb F_q[A] \cong  \bigslant{\Bbb F_q[x]}{\langle p_1^{i_1} \rangle} \times \cdots \times \bigslant{\Bbb F_q[x]}{\langle p_j^{i_j} \rangle}$ for some $p_1,p_2, \cdots, p_j$ distinct irreducible polynomials over $\Bbb F_q$. We first exclude all of the subfields of $\Bbb F_4$ from our list so that none of the $\bigslant{\Bbb F_q[x]}{\langle p_r\rangle}$ will be isomorphic to $\Bbb F_4$. So, $\bigslant{\Bbb F_q[x]}{\langle p_r \rangle}$ will be different than $\Bbb F_4$, as long as $\Bbb F_q \neq \Bbb F_2, \Bbb F_4$. This guarantees that every element of $\bigslant{\Bbb F_q[x]}{\langle p_r \rangle}$ can be written as a sum of three cubes. Then, we want to lift the $k^{\text{\tiny th}}$ powers from $\bigslant{\Bbb F_q[x]}{\langle p_r \rangle}$ to $\bigslant{\Bbb F_q[x]}{\langle p_r^{i_r} \rangle}$. We will use Proposition~\ref{fancy Henseling}. To use this proposition we need the characteristic of the finite field not to divide $k$. That is why we need to exclude $p=3$ cases. Once we guarantee that three cubes is enough for $\bigslant{\Bbb F_q[x]}{\langle p_r^{i_r} \rangle}$ for every $1 \leqslant r \leqslant j$, then Chinese remainder theorem guarantees that three cubes is enough for $\bigslant{\Bbb F_q[x]}{\langle p_1^{i_1} \rangle} \times \cdots \times \bigslant{\Bbb F_q[x]}{\langle p_j^{i_j} \rangle}$. One last thing to note is that when we determine $m$, if $-1$ is a $k^{\text{\tiny th}}$ power (in particular if $k$ is odd) we can use the same $m$ from the downstairs $\left(\bigslant{\Bbb F_q[x]}{\langle p_r \rangle}\right)$; but if $-1$ is not a $k^{\text{\tiny th}}$ power (for example in some cases when $k$ is even) we need to increase $m$ by $1$ when we do the lifting using Proposition~\ref{fancy Henseling}. That is why in the table when $k$ is even, we increased $m$ by $1$ just to be safe without paying any attention to $-1$ being a $k^{\text{\tiny th}}$ power or not.
\end{proof}
Note if we have a finite semisimple ring, by Artin-Wedderburn theory it is a direct product of finitely many matrix rings over finite fields and we can use matrix rings results to get sharp bounds for the Waring problem on finite semisimple rings. For example if we know $R$ is semisimple with $|R|=5^{4}11^{2}13$, then we know every element of $R$ can be written as a sum of two cubes; since there is not any possibility that $R$ has a matrix ring over $\Bbb F_2$, $\Bbb F_4$ or a finite field of characteristic $3$ in its decomposition.

Next we give an example which shows that sometimes the matrix rings are better than the finite fields in terms of coverability, since we have more flexibility.

\textbf{Example.} Take $\Bbb F_4 \cong \bigslant{\Bbb F_2[x]}{\langle x^2+x+1 \rangle} $ so that we can denote the elements with $\{0,1,x,x+1\}$. We have $x^2=x+1$, $(x+1)^2=x^2 + 1 = x$ and $x(x+1)=1$. Let $A=                   
\begin{bmatrix}
		1 & 1 \\
		1 & x+1 \\
\end{bmatrix}$ and $B=
\begin{bmatrix}
x+1 & 1 \\
1 & 1 \\
\end{bmatrix}.$ \\
Then, $A^3=
\begin{bmatrix}
	x & 1 \\
	1 & 0 \\
	\end{bmatrix}$ and $B^3=
\begin{bmatrix}
0 & 1 \\
1 & x \\
\end{bmatrix}$. So, $A^3+B^3=
\begin{bmatrix}
	x & 0 \\
	0 & x \\
	\end{bmatrix}.$ This is very interesting although we cannot write $x$ as a sum of cubes in $\Bbb F_4$ (since $x^3= (x+1)^3 = 1$ in $\Bbb F_4$ ), we can write $ \begin{bmatrix}
	x & 0 \\
	0 & x \\
\end{bmatrix}$ as a sum of two cubes in $\operatorname{Mat}_2 (\Bbb F_4)$. Furthermore, one can also show that every element of $\operatorname{Mat}_2 (\Bbb F_4)$ can be written as a sum of two cubes. 

\section{Waring's Problem in Finite Rings}\label{WP in Finite Rings}

Let $R$ be a finite ring with identity which is not necessarily commutative, and let $k$ be any positive integer. In this section we want to find an integer $n$ with the property that every element of $R$ can be written as a sum of at most $n$ many $k^{\text{\tiny th}}$ powers in $R$. Before we dive into results and proofs, first we need to setup our machinery for finite rings with identity. First notice $\bigslant{R}{J}$ is semisimple i.e. $J \Big( \bigslant{R}{J} \Big) = 0$. Moreover, since $R$ is finite, $\bigslant{R}{J}$ is finite so $\bigslant{R}{J}$ is both left and right Artinian. Theorem~\ref{AWT} implies that $\bigslant{R}{J} \cong \operatorname{Mat}_{n_{1}}(D_1) \times \cdots \times \operatorname{Mat}_{n_{r}}(D_r)$ for some $D_1, \cdots D_r$ division rings. Since $\bigslant{R}{J}$ is finite, each $D_i$ has to have finitely many elements. By Wedderburn's little theorem $D_i$'s are finite fields. Therefore, we have $\bigslant{R}{J} \cong \operatorname{Mat}_{n_{1}}(\Bbb F_{q_1}) \times \cdots \times \operatorname{Mat}_{n_{r}}(\Bbb F_{q_r})$ for some finite fields $\Bbb F_{q_1}, \cdots, \Bbb F_{q_r}$. Furthermore \[ J\supseteq J^2 \supseteq J^3 \supseteq \cdots \supseteq J^l = (0)\] for some $l$ by Corollary~\ref{Nak2} and \[ R= \bigslant{R}{J^l} \twoheadrightarrow \bigslant{R}{J^{l-1}} \twoheadrightarrow \cdots \bigslant{R}{J^2} \twoheadrightarrow \bigslant{R}{J} \cong \operatorname{Mat}_{n_{1}}(\Bbb F_{q_1}) \times \cdots \times \operatorname{Mat}_{n_{r}}(\Bbb F_{q_r}).\]

\begin{thm} \label{Waring-ring}
Let $R$ be a finite ring with identity which is not necessarily commutative and $J$ denote the Jacobson radical of $R$. Let $k$ be a positive integer such that $\gcd(|R|,k)=1$ where $|R|$ denotes the order of the ring.
\vspace{-0.3cm} 
\begin{itemize}
\item If every element of $\bigslant{R}{J}$ is a $k^{\text{\tiny th}}$ power, then unit elements of $R$ are $k^{\text{\tiny th}}$ powers and nonunit elements of $R$ can be written as a sum of two $k^{\text{\tiny th}}$ powers. 
\vspace{-0.3cm} 
\item Let $m>1$. Assume every element of $\bigslant{R}{J}$ can be written as a sum of $m$ many $k^{\text{\tiny th}}$ powers in $\bigslant{R}{J}$. If $\alpha \in R$ is a unit  (or equivalently if $\alpha$ $\bmod \ J$ in $\bigslant{R}{J}$ is a unit), then $\alpha$ can be written as a sum of $m$ many $k^{\text{\tiny th}}$ powers in $R$. If $k$ is odd (resp. even), then every element of $R$ can be written as a sum of $m$ (resp. $m+1$) many $k^{\text{\tiny th}}$ powers in $R$. 
\end{itemize}
\end{thm}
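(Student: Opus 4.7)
The plan is to combine Artin-Wedderburn for the semisimple quotient $R/J \cong \prod_j \operatorname{Mat}_{n_j}(\Bbb F_{q_j})$ with iterated Hensel-style lifting through the filtration $R = R/J^l \twoheadrightarrow R/J^{l-1} \twoheadrightarrow \cdots \twoheadrightarrow R/J$ given by Corollary~\ref{Nak2}. The hypothesis $\gcd(|R|,k)=1$ is used in two essential ways: it forces $k$ (as $k\cdot 1_R$) to be a unit in $R$ and in every quotient $R/J^i$, since the additive order of $1_R$ divides $|R|$; and it will make certain averaging operators invertible on each finite graded piece $J^i/J^{i+1}$.

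For the first bullet, I would handle the unit case by a purely group-theoretic observation that bypasses Hensel entirely. The hypothesis that every element of $R/J$ is a $k$-th power forces $x \mapsto x^k$ to be surjective, hence bijective, on the finite group $(R/J)^\times$, which is equivalent to $\gcd(k, |(R/J)^\times|) = 1$. Combining this with $\gcd(k, |J|) = 1$ (since $|J|$ divides $|R|$) and the surjection $R^\times \twoheadrightarrow (R/J)^\times$ with kernel $1+J$ afforded by Lemma~\ref{Small Fact}, I obtain $\gcd(k, |R^\times|) = 1$; then for any unit $\alpha$ the $k$-th power map is a bijection on $\langle\alpha\rangle$, so $\alpha = \beta^k$ for some $\beta \in \langle\alpha\rangle \subseteq R$. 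For the nonunit case I write $\alpha = 1 + (\alpha - 1)$, lift a $k$-th root of $\overline{\alpha - 1} \in R/J$ (which exists by hypothesis) to some $\beta \in R$, and then iteratively solve $\gamma^k + \beta^k = \alpha$ for $\gamma \equiv 1 \pmod{J}$ (hence automatically a unit). Given $\gamma_s^k + \beta^k \equiv \alpha \pmod{J^{s+1}}$, the correction $j \in J^{s+1}$ must satisfy $\gamma_s^{k-1} T(j) \equiv \alpha - \gamma_s^k - \beta^k \pmod{J^{s+2}}$, where $T(j) = \sum_{l=0}^{k-1} \gamma_s^{-l} j \gamma_s^{l}$. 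This operator $T$ is a bijection on the finite $R/J$-bimodule $J^{s+1}/J^{s+2}$: writing $\psi(j) = \gamma_s^{-1} j \gamma_s$, the telescoping identity $T(\psi - 1) = \psi^k - 1$ combined with the fact that the order of $\psi$ divides $|R^\times|$ (hence is coprime to $k$) shows that on $\psi$-invariants $T$ is multiplication by the unit $k$, while on the complement $\psi^k - 1$ is invertible (any eigenvalue $\omega$ of $\psi$ with $\omega^k = 1$ would have $\operatorname{ord}(\omega) \mid \gcd(\operatorname{ord}(\psi), k) = 1$, forcing $\omega = 1$). Since $J^l = 0$ the iteration terminates.

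For the second bullet, the unit case runs the same Hensel lift starting from the given decomposition $\bar\alpha = \sum_{i=1}^m \bar B_i^k$, but now with a multi-variable Jacobian $(j_1,\ldots,j_m) \mapsto \sum_i L_{B_i}(j_i)$ where $L_B(j) = \sum_l B^l j B^{k-1-l}$; surjectivity of this Jacobian on each $J^{s+1}/J^{s+2}$ reduces, by the same eigenvalue analysis applied to an individual $L_{B_i}$, to having at least one $\bar B_i$ be a unit in $R/J$. The nonunit general case then follows by writing $\alpha = u + (\alpha - u)$ for a well-chosen unit $u$ produced using the factor-wise product structure of $R/J$, or equivalently by padding the decomposition with pairs $1^k + (-1)^k = 0$ when $k$ is odd; for $k$ even this identity fails and one additional $k$-th power summand is required, which is precisely the $m$ versus $m+1$ dichotomy in the theorem's statement. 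The main obstacle is producing a decomposition of $\bar\alpha$ with a unit summand: the hypothesis a priori gives $m$ summands $\bar B_i$ with no control on their invertibility, and the naive repair $\bar\alpha = 1^k + (\bar\alpha - 1)$ followed by expressing $\bar\alpha - 1$ as $m$ many $k$-th powers costs one extra summand; circumventing this requires a careful combinatorial adjustment exploiting the semisimple structure of $R/J$ together with the parity of $k$.
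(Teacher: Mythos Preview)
Your first-bullet argument is correct and takes a genuinely different route from the paper. The paper never lifts directly in noncommutative $R$: it immediately passes to the commutative subring $\mathbb{Z}[\alpha]$ generated by the element in question, so that the semisimple quotient becomes a product of \emph{fields}, and then applies the commutative Hensel lemma (Corollary~\ref{Hensel2}). Your group-theoretic shortcut for units (deducing $\gcd(k,|R^\times|)=1$ from bijectivity of the $k$-th power map on $(R/J)^\times$ together with $|1+J|=|J|\mid |R|$) is slicker than anything in the paper. Your noncommutative Hensel step via $T=\sum_{l=0}^{k-1}\psi^l$ is also correct, though the eigenvalue language is imprecise over a bare finite abelian group $M=J^{s+1}/J^{s+2}$; a clean repair is: if $T(j)=0$, then on the $\psi$-invariant subgroup $N$ generated by $j$ one has $T|_N=0$, hence $(\psi^k-1)|_N=T(\psi-1)|_N=0$, and combined with $\psi^n=1$ and $\gcd(k,n)=1$ this forces $\psi|_N=\mathrm{id}$, so $T|_N=k\cdot\mathrm{id}$ and $N=0$ since $\gcd(k,|M|)=1$.

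For the second bullet your acknowledged obstacle---arranging a unit among the $\bar B_i$---is real, and the paper's reduction to $\mathbb{Z}[\alpha]$ is precisely what dissolves it. Once the semisimple quotient is a product of fields $\prod_i\mathbb{F}_{q_i}$ rather than matrix rings, the ``combinatorial adjustment'' becomes trivial: for a unit $\bar\alpha=(\alpha_1,\ldots,\alpha_r)$ written as $\sum_j\bar B_j^k$, each coordinate has some $B_{j,i}\neq 0$, and since $k$-th powering acts coordinatewise one may shuffle the nonzero entries into $\bar B_1$ without altering the sum, making $\bar B_1$ a unit. The nonunit case is then handled by the same coordinate trick, inserting $1$ and $-1$ (for $k$ odd) or a length-$m$ decomposition of $-1$ (for $k$ even, costing the extra summand) into the zero coordinates. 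This shuffling is unavailable inside a single matrix factor $\mathrm{Mat}_n(\mathbb{F}_q)$, which is exactly where your direct approach stalls; the passage to $\mathbb{Z}[\alpha]$ is the missing ingredient.
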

\vspace{-0.5cm} 
\begin{proof}
\textbf{Case 1:} Let $R$ be commutative. Then we have $\bigslant{R}{J} \cong \Bbb F_{q_1} \times \cdots \times \Bbb F_{q_r}$ for some finite fields, $\Bbb F_{q_i}$'s. Let $\alpha \in R$. Since we assumed every element of  $\bigslant{R}{J}$ can be written as a sum of $m$ many $k^{\text{\tiny th}}$ powers in $\bigslant{R}{J}$, we know $\alpha \equiv B_1^k + \cdots+ B_m^k$ $\bmod \ J$ for some $B_1, \cdots, B_m \in R$. We can send $\alpha$ from $R$ to $\bigslant{R}{J}$ with the reduction $\bmod \ J$, and denote it with $\bar{\alpha}$ so that $\bar{\alpha}=(\alpha_1,\alpha_2,\cdots, \alpha_r)$ for some $\alpha_i \in \Bbb F_{q_i}$ where $1 \leqslant i \leqslant r$. Similarly, we will use $\bar{B}_j$ to denote $B_j$ $\bmod \ J$ for every $B_j \in R$.
 
\textbf{Case 1.a:} If $\bar{\alpha}$ is a unit in $\bigslant{R}{J}$ that means each entry of $\bar{\alpha}$ (i.e. $\alpha_i$ for every $1 \leqslant i \leqslant r$) is nonzero. Then we can arrange $\bar{B}_j$'s (by shuffling the entries of $\bar{B}_j$'s) such that $\bar{B}_1$ does not have any zero entry, so that $\bar{B}_1$ is a unit in $\Bbb F_{q_1} \times \cdots \times \Bbb F_{q_r}$. Since we have $\gcd(|R|,k)=1$ by assumption, $\bar{k}=(k_1, k_2, \cdots, k_r)$ that is the image of $k$ under reduction $\bmod \ J$ is a unit in $\Bbb F_{q_1} \times \cdots \times \Bbb F_{q_r}$. This implies $\bar{k} \bar{B}_1^{k-1}$ is a unit, and the result follows from Corollary~\ref{Hensel2}.

\textbf{Case 1.b:}
If $\bar{\alpha}$ is not a unit in $\bigslant{R}{J}$, that means at least one of the entries of $\bar{\alpha}$ is zero. Assume only one of them equals $0$, say $\alpha_1$. We have $\alpha \equiv B_1^k + \cdots+ B_m^k$ $\bmod \ J$ by assumption. We can shuffle the entries of $\bar{B}_j$'s such that the $i^{\text{\tiny th}}$ entry of $\bar{B}_1$ is nonzero for every $2 \leqslant i \leqslant r$. If the first entry of one of the $\bar{B}_j$'s is nonzero, then we can shuffle the nonzero entry to $\bar{B}_1$ and $\bar{B}_1$ becomes again a unit, we are back in case 1.a. If the first entries of all of the $\bar{B}_j$'s are zero for $1 \leqslant j \leqslant m$, then we can do the following trick. We can replace $0$ in the first entry of $\bar{B}_1$ with $1$, so that $\bar{B}_1$ becomes a unit. But then the first entry of $\bar{B}_1^k$ also becomes $1$ and we need to destroy the effect of $1$ in the summation ($\bar{B}_{1}^k+\bar{B}_{2}^k+ \cdots+ \bar{B}_{m}^k=\bar{\alpha}$). Therefore, we have to change the first entries of $\bar{B}_j$'s for $2 \leqslant j \leqslant m$ such that the first entry of the summation $\bar{B}_2^k + \cdots+ \bar{B}_m^k$ will be $-1$. If $k$ is odd, then simply replace $0$ in the first entry of $\bar{B}_2$ with $-1$, leave the other $\bar{B}_j$'s as they are and the problem is solved since $(-1)^k=-1$. If $k$ is even, by assumption $(-1,0,\cdots,0)$ can be written as a sum of $m$ many $k^{\text{\tiny th}}$ powers in $\bigslant{R}{J}$, this implies for some $x_j \in \Bbb F_{q_1}$ we have $(-1,0,\cdots,0)= (x_1,0,\cdots,0)^k+(x_2,0,\cdots,0)^k+\cdots+(x_m,0,\cdots,0)^k$. Let $\bar{B}_{new}=(x_1,0,\cdots,0)$, and replace the first entries of $\bar{B}_j$'s with $x_j$'s for $2 \leqslant j \leqslant r$, so that the first entry of the summation $\bar{B}_{new}^k+\bar{B}_2^k + \cdots+ \bar{B}_m^k$ will be $-1$ and $\bar{\alpha}$ will be equal to $\bar{B}_{1}^k+ \bar{B}_{new}^k +\bar{B}_{2}^k+ \cdots+ \bar{B}_{m}^k$. We again have $\bar{k} \bar{B}_1^{k-1}$ is a unit, since $\gcd(|R|,k)=1$ and $\bar{B}_1$ is a unit. Corollary~\ref{Hensel2} implies that $\alpha$ can be written as a sum of $m+1$ many $k^{\text{\tiny th}}$ powers.

\textbf{Case 2:} Let $R$ be noncommutative. To reduce this case to the commutative case, we use the following trick. Let $\alpha \in R$. Consider the subring generated by $\alpha$, i.e. $\mathbb{Z}[\alpha]= \left\{ a_{0}.1+a_{1}\alpha+\cdots+a_{l}\alpha^{l} \mid a_0, a_1 \cdots a_l \in \mathbb{Z} \right\}.$ It is a subring but it is a commutative finite ring with identity by itself. Moreover, since the order of $\mathbb{Z}[\alpha]$ has to divide $|R|$, $\gcd(|R|,k)=1$ implies $\gcd(|\mathbb{Z}[\alpha]|,k)=1$ and we are again in case $1$. Note that this approach enables us to write every element of $\mathbb{Z}[\alpha]$ as a sum of $k^{\text{\tiny th}}$ powers from $\mathbb{Z}[\alpha]$ instead of $R$, which means when $R$ is not commutative the $n$ value (i.e. the number of $k^{\text{\tiny th}}$ powers needed to write $\alpha$ as a sum) we find with this approach can be bigger than the actual $n$ value. 
\end{proof}
We actually proved a stronger result in the last proof. It follows that in the second part of Theorem~\ref{Waring-ring} if $-1$ is a $k^{\text{\tiny th}}$ power in $R$, then every element of $R$ can be written as a sum of $m$ many $k^{\text{\tiny th}}$ powers in $R$, otherwise we need to increase $m$ by $1$.
\begin{thm} \label{importantTable}
Let $R$ be a finite ring with identity which is not necessarily commutative. In the following table in a fixed row, let $k$ be a positive integer as in the first column. If none of the $q$ values stated in the second column divides the order of the ring, then every element of $R$ can be written as a sum of $n$ many $k^{\text{\tiny th}}$ powers in $R$. In fact, in this case any $\alpha \in R$ has $\alpha=B_{1}^k+B_{2}^k+\cdots+B_{n}^k$ with $B_{1}$, $B_{2}, \cdots, B_{n}$ in $\mathbb{Z}[\alpha]$. (For example the first row should be read as "If $3,4 \nmid |R|$, then any element of $R$ can be written as a sum of three cubes in $R$. In fact, in this case any $\alpha \in R$ has $\alpha=B_{1}^3+B_{2}^3+B_{3}^3$ with $B_{1}$, $B_{2}$, $B_{3}$ in $\mathbb{Z}[\alpha]$.")
\end{thm}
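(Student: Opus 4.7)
The plan is to deduce this theorem as a direct corollary of Theorem~\ref{Waring-ring} together with the finite-field tables (Proposition~\ref{table1-prop}, Theorems~\ref{table2-thm}, \ref{table3-thm}, \ref{gamma bigger than or equal to 7-thm}) and the asymptotic bounds Theorems~\ref{Main Theorem} and \ref{gamma3} from Section~\ref{fields section}. For each fixed $k$, the exclusion list in column two is designed to do two jobs at once: (i) eliminate every prime $p$ with $p \mid k$, so that $\gcd(|R|,k)=1$ as demanded by Theorem~\ref{Waring-ring}; and (ii) eliminate every finite field $\Bbb F_q$ that is either uncoverable by $k$-th powers or forces $\gamma(k,q)$ to exceed the target value $m$ (where $m=n$ if $k$ is odd, and $m=n-1$ if $k$ is even, to absorb the ``$-1$ not a $k$-th power'' penalty from Theorem~\ref{Waring-ring}).

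My first step is the reduction to the commutative setting, exactly as in Case~2 of the proof of Theorem~\ref{Waring-ring}: given $\alpha \in R$, replace $R$ by the commutative finite subring $\mathbb{Z}[\alpha]$ with identity. Because $|\mathbb{Z}[\alpha]|$ divides $|R|$, none of the forbidden $q$ values from the row divide $|\mathbb{Z}[\alpha]|$ either, and in particular $\gcd(|\mathbb{Z}[\alpha]|,k)=1$. Next, I invoke the Artin--Wedderburn theorem (Theorem~\ref{AWT}) together with Wedderburn's little theorem (Theorem~\ref{WLT}) to conclude that $\mathbb{Z}[\alpha]/J \cong \Bbb F_{q_1}\times\cdots\times \Bbb F_{q_r}$ where each $q_i$ is a prime power dividing $|\mathbb{Z}[\alpha]|$. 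The exclusion of characteristic primes of $k$ from the second column guarantees that $p \nmid q_i$ for any $p\mid k$, which is exactly what Corollary~\ref{Hensel2} needs, and the exclusion of all uncoverable and ``large $\gamma$'' values of $q$ guarantees that each $\Bbb F_{q_i}$ satisfies $\gamma(k,q_i)\leqslant m$.

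Given that coordinatewise bound, any $\bar\alpha = (\alpha_1,\ldots,\alpha_r) \in \mathbb{Z}[\alpha]/J$ can be written as a sum of $m$ many $k$-th powers in the product ring by representing each $\alpha_i$ as such a sum in $\Bbb F_{q_i}$ and then assembling the coordinates. Applying Theorem~\ref{Waring-ring} to $\mathbb{Z}[\alpha]$ now lifts this representation back to $\mathbb{Z}[\alpha]$ itself, yielding $\alpha = B_1^k + \cdots + B_n^k$ with $B_i \in \mathbb{Z}[\alpha]$; here $n=m$ if $\alpha$ is a unit or if $-1$ is already a $k$-th power (e.g.\ when $k$ is odd), and $n=m+1$ otherwise, which is the conservative choice adopted in the table. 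In particular, since Case~1b of the proof of Theorem~\ref{Waring-ring} only uses elements of $\mathbb{Z}[\alpha]$ to build the summands, the containment $B_i \in \mathbb{Z}[\alpha]$ comes for free.

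The main obstacle is not conceptual but bookkeeping: one must verify for each row $(k,n)$ that the listed $q$'s cover precisely (a) the primes dividing $k$, (b) the uncoverable fields from Table~\ref{table1}, and (c) all $q$ with $\gamma(k,q) > n$ (or $> n-1$ when $k$ is even) that are not already excluded by (a) or (b). For the large-$q$ tail this is handled uniformly by Theorems~\ref{Main Theorem} and \ref{gamma3} (and, when we aim at $n\leqslant 2$, by Winterhof's bound quoted in the remark after Theorem~\ref{gamma3}), so only finitely many $q$ per row need to be checked against the tables in Section~\ref{fields section}. Once that cross-referencing is done row by row, the theorem follows.
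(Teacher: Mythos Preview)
Your proposal is correct and follows essentially the same route as the paper's own proof: reduce to the commutative subring $\mathbb{Z}[\alpha]$, use Artin--Wedderburn to write $\mathbb{Z}[\alpha]/J$ as a product of finite fields whose orders divide $|R|$, invoke the finite-field tables from Section~\ref{fields section} to bound $m$, and then apply Theorem~\ref{Waring-ring} with the odd/even parity adjustment. The paper's proof is terser about the bookkeeping step (it simply says ``building on our results in Section~\ref{fields section} we can determine $m$''), whereas you spell out more explicitly which theorems handle the large-$q$ tail and why each $q_i$ must divide $|R|$; but the logical skeleton is identical.
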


\begin{longtable}{ | p{1.8cm} | p{12cm} | p{1.8cm} | }
\hline
\boldmath$k=$ &  \boldmath$q \nmid $ & \boldmath$n=$\\
\hline \hline
\multirow{2}{*}{} $3$ & $3$, $4$ & $3$ \\ \cline{2-2} \cline{3-3} 
                  	         & $3$, $4$, $7$ & $2$ \\ \cline{2-2} \cline{3-3} \hline  \hline
\multirow{3}{*}{} $4$ & $2$, $9$ & $5$ \\ \cline{2-2} \cline{3-3} 
                  	         & $2$, $5$, $9$ & $4$ \\ \cline{2-2} \cline{3-3} 
                  	         & $2$, $5$, $9$, $13$, $17$, $29$ & $3$ \\ \cline{2-2}  \cline{3-3} \hline  \hline
\multirow{3}{*}{} $5$ & $5$, $16$ & $5$ \\ \cline{2-2} \cline{3-3} 
                  	         & $5$, $11$, $16$ & $3$ \\ \cline{2-2} \cline{3-3} 
                  	         & $5$, $11$, $16$, $31$, $41$, $61$ & $2$ \\ \cline{2-2}  \cline{3-3} \hline  \hline	      
\multirow{4}{*}{} $6$ & $2$, $3$, $25$ & $7$ \\ \cline{2-2} \cline{3-3} 
					    & $2$, $3$, $7$, $13$, $25$ & $5$ \\ \cline{2-2} \cline{3-3} 
                  	         & $2$, $3$, $7$, $13$, $19$, $25$, $31$ & $4$ \\ \cline{2-2} \cline{3-3} 
                  	         & $2$, $3$, $7$, $13$, $19$, $25$, $31$, $37$, $43$, $61$, $67$, $73$, $79$, $109$, $139$, $223$  & $3$ \\  \cline{2-2}  \cline{3-3} \hline  \hline
\multirow{3}{*}{} $7$ &  $7$, $8$ & $4$  \\ \cline{2-2} \cline{3-3} 
                  	         &  $7$, $8$, $29$, $43$ & $3$ \\ \cline{2-2} \cline{3-3} 
                  	         &  $7$, $8$, $29$, $43$, $71$, $113$, $127$ & $2$ \\ \cline{2-2}  \cline{3-3} \hline  \hline
\multirow{4}{*}{} $8$ &  $2$, $9$, $49$ & $9$ \\ \cline{2-2} \cline{3-3} 
                  	       &  $2$, $9$, $17$, $49$ & $5$ \\ \cline{2-2} \cline{3-3} 
	      			  &  $2$, $5$, $9$, $17$, $41$, $49$ & $4$ \\ \cline{2-2}  \cline{3-3}
                  	      &  $2$, $5$, $9$, $13$, $17$, $29$, $41$, $49$, $73$, $89$, $97$, $113$, $121$, $137$, $233$, $257$, $337$, $761$ & $3$ \\ \cline{2-2}  \cline{3-3} \hline  \hline
\multirow{4}{*}{} $9$ & $3$, $4$ & $9$  \\ \cline{2-2} \cline{3-3} 
                  	       & $3$, $4$, $19$ & $5$ \\ \cline{2-2} \cline{3-3} 
	      			  & $3$, $4$, $19$, $37$ & $3$ \\  \cline{2-2} \cline{3-3} 
                  	       & $3$, $4$, $7$, $19$, $37$, $73$, $109$, $127$, $163$, $181$, $199$, $271$, $307$ & $2$ \\ \cline{2-2}  \cline{3-3} \hline  \hline	      
\multirow{5}{*}{} $10$ & $2$, $5$, $81$ & $11$ \\ \cline{2-2} \cline{3-3} 
                  	           & $2$, $5$, $11$, $81$ & $6$ \\ \cline{2-2} \cline{3-3} 
	      			      & $2$, $5$, $11$, $31$, $81$ & $5$ \\ \cline{2-2} \cline{3-3} 
				 	      & $2$, $5$, $11$, $31$, $41$, $61$, $81$ & $4$ \\ \cline{2-2} \cline{3-3} 
                  	           & $2$, $5$, $11$, $31$, $41$, $61$, $71$, $81$, $101$, $131$, $151$, $181$, $191$, $211$, $241$, $251$, $271$, $281$, $311$, $331$, $401$, $421$, $431$, $461$, $491$, $641$, $911$ & $3$ \\ \cline{2-2}  \cline{3-3} \hline  \hline    
\multirow{4}{*}{} $11$ & $11$, $23$ & $5$ \\ \cline{2-2} \cline{3-3} 
                  	           & $11$, $23$, $67$ & $4$ \\ \cline{2-2} \cline{3-3} 
	      			      & $11$, $23$, $67$, $89$ & $3$ \\ \cline{2-2} \cline{3-3} 
				 	      & $11$, $23$, $67$, $89$, $199$, $331$, $353$, $419$, $463$, $617$ & $2$ \\ \cline{2-2} \cline{3-3} \hline     	              
\end{longtable}

\begin{proof}
Given a $k$ value, we want to find an integer $n$ with the property that every element of $R$ can be written as a sum of at most $n$ many $k^{\text{\tiny th}}$ powers in $R$. We want to use the previous theorem. One of the assumptions in that theorem is $\gcd(|R|,k)=1$. In the table, notice that $q$-column always has the divisors of $k$. So when we say $q$ does not divide $|R|$ in the statement, it is guaranteed that $\gcd(|R|,k)=1$.

If $R$ is commutative, then $\bigslant{R}{J}$ is isomorphic to the direct product of some finite fields as we discussed earlier. If $R$ is not commutative, then let $\alpha \in R$. Since $\mathbb{Z}[\alpha]$ is a commutative finite ring with identity, we have $\bigslant{\mathbb{Z}[\alpha]}{J \left( \mathbb{Z}[\alpha] \right)} \cong \Bbb F_{q_1} \times \cdots \times \Bbb F_{q_r}$ for some finite fields $\Bbb F_{q_1}, \cdots, \Bbb F_{q_r}$. That means to find $m$ in the previous theorem, first we need to find how many $k^{\text{\tiny th}}$ powers is necessary to write every element of $\Bbb F_{q_i}$ as a sum of $k^{\text{\tiny th}}$ powers in $\Bbb F_{q_i}$. We denote this number with $m_i$ for each $\Bbb F_{q_i}$, i.e. every element of $\Bbb F_{q_i}$ can be written as a sum of $m_i$ many $k^{\text{\tiny th}}$ powers in $\Bbb F_{q_i}$. Then, every element of $\bigslant{R}{J}$ $\left( \text{or resp. } \bigslant{\mathbb{Z}[\alpha]}{J \left( \mathbb{Z}[\alpha] \right)}\right)$ can be written as a sum of $m=\max_{1 \leqslant i \leqslant r} m_i$ many $k^{\text{\tiny th}}$ powers in $\bigslant{R}{J}$ $\left( \text{or resp. in } \bigslant{\mathbb{Z}[\alpha]}{J \left( \mathbb{Z}[\alpha] \right)}\right)$. Therefore, building on our results in Section~\ref{fields section} we can determine $m$, and using the previous theorem we can let $n=m$ or $n=m+1$ depending on the parity of $k$.
\end{proof}

Notice that in the previous theorem, we did not note the rings in which every element is a $k^{\text{\tiny th}}$ power, i.e. there are not any rows with $n=1$ in the table. The following propositions are stated with this purpose, and their proofs follow from Proposition~\ref{gamma1 case} and Theorem~\ref{Waring-ring} easily. 

\begin{prop}
Let $R$ be a finite ring with identity and with a cube-free order. Let $|R|=p_1^{i_{1}}p_2^{i_{2}} \cdots p_s^{i_{s}}$ be the prime factorization of the order of the ring. Let $k$ be a positive integer such that $p_j \nmid k$ for any $j$, and $\gcd(k,p_j^{i_j}-1)=1$ for all $1 \leqslant j \leqslant s$. If $\alpha \in R$ is a unit, then $\alpha$ is a $k^{\text{\tiny th}}$ power in $R$. If $k$ is odd, then any element of the ring is a $k^{\text{\tiny th}}$ power. If $k$ is even, then any element of the ring can be written as a sum of two $k^{\text{\tiny th}}$ powers.
\end{prop}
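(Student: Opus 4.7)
The plan is to combine the Artin-Wedderburn decomposition of $\bigslant{R}{J}$ with Proposition~\ref{gamma1 case} (covering each finite field factor) and Theorem~\ref{Waring-ring} (providing the Hensel-type lifting). The key structural observation is that the cube-free hypothesis on $|R|$ forces $\bigslant{R}{J}$ to be a product of finite fields, at which point the two cited results essentially do all the work.

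First I would examine $\bigslant{R}{J}$. Since $J$ is nilpotent by Corollary~\ref{Nak2}, the primes dividing $|\bigslant{R}{J}|$ are among $p_1, \ldots, p_s$, and the cube-free condition on $|R|$ transfers to $|\bigslant{R}{J}|$. By Theorem~\ref{AWT} together with Theorem~\ref{WLT}, I can write $\bigslant{R}{J} \cong \operatorname{Mat}_{n_1}(\Bbb F_{q_1}) \times \cdots \times \operatorname{Mat}_{n_r}(\Bbb F_{q_r})$. Since $|\operatorname{Mat}_n(\Bbb F_q)| = q^{n^2}$ contains at least a fourth power of the characteristic whenever $n \geqslant 2$, the cube-free condition forces every $n_j = 1$. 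Hence $\bigslant{R}{J}$ is a direct product of finite fields $\Bbb F_{q_1} \times \cdots \times \Bbb F_{q_r}$.

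Next I would verify that every element of $\bigslant{R}{J}$ is a $k$-th power. Each $q_j$ has the form $p_t^e$ for some prime $p_t$ dividing $|R|$ with $1 \leqslant e \leqslant i_t$. Because $i_t \in \{1,2\}$ we always have $e \mid i_t$, so $p_t^e - 1$ divides $p_t^{i_t} - 1$, and the hypothesis $\gcd(k, p_t^{i_t}-1) = 1$ forces $\gcd(k, q_j - 1) = 1$. By Proposition~\ref{gamma1 case} every element of $\Bbb F_{q_j}$ is a $k$-th power, and taking the product across factors establishes the same for $\bigslant{R}{J}$. Now I invoke Theorem~\ref{Waring-ring}: the condition $p_j \nmid k$ for every $j$ is precisely $\gcd(|R|, k) = 1$, and the first bullet of that theorem directly yields that every unit of $R$ is a $k$-th power while every nonunit is a sum of two $k$-th powers.

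The one genuinely delicate point — and what I expect to be the main obstacle — is reconciling the parity dichotomy in the statement with the lifting step. Tracing through case 1.b in the proof of Theorem~\ref{Waring-ring}, the "second" $k$-th power in the nonunit lift arises from compensating an artificial $1$ introduced to make the reduced approximation a unit (so that Corollary~\ref{Hensel2} applies). When $k$ is odd, $(-1)^k = -1$ so the compensating term is itself a $k$-th power and the parity of $k$ interacts cleanly with the sign arithmetic in $\bigslant{R}{J}$; when $k$ is even, $-1$ need not be a $k$-th power and the second summand becomes genuinely necessary, exactly as in the remark following Theorem~\ref{Waring-ring}. Carefully bookkeeping this dichotomy through the casework of Theorem~\ref{Waring-ring}'s proof produces the odd/even split stated in the proposition, with no further tools required beyond Proposition~\ref{gamma1 case} and Theorem~\ref{Waring-ring}.
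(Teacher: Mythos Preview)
Your reduction via Artin--Wedderburn is exactly what the paper has in mind: it declares that the proposition ``follows from Proposition~\ref{gamma1 case} and Theorem~\ref{Waring-ring} easily,'' and your first two paragraphs carry this out correctly, including the observation that cube-freeness kills all matrix blocks of size $\geqslant 2$ and the divisibility check that forces $\gcd(k,q_j-1)=1$ on every field factor.

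The gap is in your last paragraph, and it is not one you can close. Tracing case~1.b of the proof of Theorem~\ref{Waring-ring} when $k$ is odd does \emph{not} collapse the two summands to one: the compensating term $(-1,0,\ldots,0)$ is a \emph{second} $k$-th power that must be added to the modified $\bar B_1^k$, and after Hensel-lifting $\bar B_1$ via Corollary~\ref{Hensel2} you still have $\alpha=\beta^k+B_2^k$, a genuine sum of two. Your sentence ``the parity of $k$ interacts cleanly with the sign arithmetic'' is exactly where the argument breaks. In fact the odd-$k$ clause as stated is simply false: take $R=\mathbb{Z}_9$ and $k=5$. Then $|R|=3^2$ is cube-free, $3\nmid 5$, and $\gcd(5,3^2-1)=\gcd(5,8)=1$, so all hypotheses hold; yet $3\in\mathbb{Z}_9$ is not a fifth power, since $x\mapsto x^5$ permutes the six units of $\mathbb{Z}_9$ and sends each of $0,3,6$ to $0$. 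What Theorem~\ref{Waring-ring} actually delivers---and what your argument actually establishes---is that units are $k$-th powers and nonunits are sums of two $k$-th powers, regardless of the parity of $k$. The sharper odd-case assertion is an overstatement in the paper that no amount of bookkeeping will recover.
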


\begin{prop}
Let $R$ be a finite ring with identity. Let $|R|=p_1^{i_{1}}p_2^{i_{2}} \cdots p_s^{i_{s}}$ be the prime factorization of the order of the ring. Let $k$ be a positive integer such that $p_j \nmid k$ for any $j$, and $\gcd \left(k, \prod_{\iota=1}^{i_j}(p_j^{\iota}-1) \right)=1$ for all $1 \leqslant j \leqslant s$. If $\alpha \in R$ is a unit, then $\alpha$ is a $k^{\text{\tiny th}}$ power in $R$. If $k$ is odd, then any element of the ring is a $k^{\text{\tiny th}}$ power. If $k$ is even, then any element of the ring can be written as a sum of two $k^{\text{\tiny th}}$ powers.
\end{prop}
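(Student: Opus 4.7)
The plan is to identify the hypotheses on $k$ as exactly what is needed to feed into Proposition~\ref{gamma1 case} and the first part of Theorem~\ref{Waring-ring}. First I would pass, as in the proof of Theorem~\ref{Waring-ring}, to the commutative subring $\mathbb{Z}[\alpha]$ generated by a given $\alpha \in R$. Since $|\mathbb{Z}[\alpha]|$ divides $|R|$, it factors as $p_1^{a_1}\cdots p_s^{a_s}$ with $0\leq a_j\leq i_j$, and the hypothesis $p_j\nmid k$ yields the coprimality $\gcd(k,|\mathbb{Z}[\alpha]|)=1$ required by Theorem~\ref{Waring-ring}. This commutative reduction also handles the non-commutative case, at the cost of exhibiting the $k^{\text{th}}$-power decompositions inside $\mathbb{Z}[\alpha]$ rather than all of $R$.

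Next I would apply Artin-Wedderburn together with Wedderburn's little theorem to decompose
\[
\bigslant{\mathbb{Z}[\alpha]}{J(\mathbb{Z}[\alpha])} \cong \Bbb F_{q_1}\times\cdots\times\Bbb F_{q_r},
\]
where each $q_\ell$ is a prime power $p_j^{c_\ell}$ with $c_\ell\leq a_j\leq i_j$. Because $q_\ell-1=p_j^{c_\ell}-1$ occurs as one of the factors of $\prod_{\iota=1}^{i_j}(p_j^\iota-1)$, the standing coprimality hypothesis forces $\gcd(k,q_\ell-1)=1$ for every index $\ell$. Proposition~\ref{gamma1 case} then gives $\Bbb F_{q_\ell}=R_k$, and reading this componentwise shows that every element of $\mathbb{Z}[\alpha]/J(\mathbb{Z}[\alpha])$ is itself a single $k^{\text{th}}$ power.

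Finally I would feed this into the first bullet of Theorem~\ref{Waring-ring} applied to $\mathbb{Z}[\alpha]$: a unit $\alpha$ reduces to a unit in the product of fields and is consequently a $k^{\text{th}}$ power in $\mathbb{Z}[\alpha]\subseteq R$, establishing the first assertion. For the remaining claims about parity I would invoke the refinement stated just after the proof of Theorem~\ref{Waring-ring} --- when $-1$ is itself a $k^{\text{th}}$ power in $R$, no extra summand is spent during the Hensel lift that repairs a zero coordinate of $\bar B_1$; for odd $k$ this is automatic from $-1=(-1)^k$, whereas for even $k$ the extra summand is genuine, yielding the stated sum of two. The only piece of bookkeeping to be careful about is verifying that the prime-power exponents $c_\ell$ arising in the Artin-Wedderburn decomposition of $\mathbb{Z}[\alpha]/J$ really are bounded by the corresponding $i_j$, which follows from $|\mathbb{Z}[\alpha]| \mid |R|$ together with the fact that the product of the residue field orders divides $|\mathbb{Z}[\alpha]/J(\mathbb{Z}[\alpha])|$.
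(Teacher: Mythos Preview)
Your approach mirrors the paper's own (terse) indication that the result follows from Proposition~\ref{gamma1 case} and Theorem~\ref{Waring-ring}, and your reduction to the commutative subring $\mathbb{Z}[\alpha]$ together with the bound $c_\ell \le i_j$ on the residue-field exponents is exactly the right bookkeeping. The unit case and the even-$k$ case go through as you describe, via the first bullet of Theorem~\ref{Waring-ring}.

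There is, however, a genuine gap in your handling of the odd-$k$ assertion. The refinement you invoke after Theorem~\ref{Waring-ring} concerns only the \emph{second} bullet, which explicitly assumes $m>1$: the repair trick there works by inserting $-1$ into the matching coordinate of some \emph{other} $\bar B_j$, and when $m=1$ there is no other summand available to absorb it. The first bullet itself promises only ``sum of two'' for nonunits, with no parity improvement, and in fact this is sharp. Concretely, take $R=\Bbb F_2[x]/\langle x^2\rangle$ and $k=5$: the hypotheses of the proposition hold since $2\nmid 5$ and $\gcd\!\big(5,(2-1)(4-1)\big)=1$, yet the set of $5^{\text{th}}$ powers in $R$ is $\{0,1,1+x\}$ (as $(1+x)^2=1$ forces $(1+x)^5=1+x$ and $x^5=0$), so the nonunit $x$ is not a $5^{\text{th}}$ power. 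Thus the odd-$k$ clause, read literally, cannot be established by any argument; what your route and the cited tools genuinely deliver is that every element is a sum of at most two $k^{\text{th}}$ powers, independent of parity.
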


For example consider the group algebra $\Bbb F_3[T]$ where $T$ is the group of upper triangular $3 \times 3$ matrices with $1$'s in the diagonal over $\Bbb F_3$ i.e. $ T= \left\{  \begin{bmatrix}
	1 & a & b\\
	0 & 1 & c\\
	0 & 0 & 1\\
	\end{bmatrix} \  \bigg | \ a,b,c \in \Bbb F_3 \right\}$. $T$ is a group under multiplication and nonabelian, this implies $\Bbb F_3[T]$ is noncommutative. Moreover, Maschke's theorem does not apply, the group algebra is not semisimple. But via the last proposition we can conclude that every element of $\Bbb F_3[T]$ is a septic ($7^{\text{\tiny th}}$ power), since $3 \nmid 7$ and $\gcd(7, 2 \times 8 \times 26 \times 80)=1$.
	
\appendix
\section{Lemmata} \label{Lemmata}
Here we prove the basic inequalities used in Section~\ref{fields section}.
\begin{lem} \label{induction1}
Let $x,y \in \mathbb{Z}_{+} \setminus \{1\}$. If $y>x^4$, then $(y-1)^4-x^4y^3+(y-1)y^2x^3>0.$
\end{lem}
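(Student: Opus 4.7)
The plan is to expand $(y-1)^4$ by the binomial theorem and collect powers of $y$ so as to isolate two obviously nonnegative pieces. A direct calculation gives
\[
(y-1)^4 - x^4 y^3 + (y-1)y^2 x^3 \;=\; y^3(y - x^4) \;+\; (x^3 - 4)y^3 \;+\; (6 - x^3)y^2 \;-\; 4y + 1.
\]
The first term captures the hypothesis $y > x^4$; the second captures $x \geq 2$; the remaining low-order terms will be absorbed as noise.

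Next I would exploit integrality twice. Since $x,y \in \mathbb{Z}_+$ and $y > x^4$, we actually have $y - x^4 \geq 1$, so $y^3(y - x^4) \geq y^3$. Since $x \geq 2$, we have $x^3 \geq 8$, hence $(x^3 - 4)y^3 \geq 4y^3$ and $(6 - x^3)y^2 \geq -x^3 y^2$. Finally, $y \geq x^4 + 1 > x^3$ gives $x^3 y^2 < y \cdot y^2 = y^3$. Stringing these together yields
\[
(y-1)^4 - x^4 y^3 + (y-1)y^2 x^3 \;\geq\; y^3 + 4y^3 - y^3 - 4y + 1 \;=\; 4y^3 - 4y + 1,
\]
and this last quantity equals $4y(y-1)(y+1) + 1 > 0$ for every $y \geq 1$, which certainly holds under our hypotheses.

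There is no real obstacle here beyond careful bookkeeping: the term $y^3(y-x^4)$ alone is not in general enough to beat the $x^4 y^3$ subtracted from $y^4$ (it is only $\geq y^3$ in the tight case $y = x^4 + 1$), so it is essential to keep and use the $(x^3-4)y^3$ contribution arising from the additional summand $(y-1)y^2 x^3$. Once the expansion is organized as above, both the integrality step $y - x^4 \geq 1$ and the inequality $x^3 y^2 \leq y^3$ are immediate, and the proof reduces to the trivial estimate $4y^3 - 4y + 1 > 0$.
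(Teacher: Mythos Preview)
Your proof is correct, but it takes a genuinely different route from the paper's. The paper proceeds by induction on $y$: it first factors
\[
F(x,x^4+1)=x^4(x^3-x-1)(x^8+x^6-2x^5+3x^4-x^3+x^2-x+1)
\]
and checks each factor is positive for $x\geq 2$, then shows the forward difference $F(x,x^4+C+1)-F(x,x^4+C)$ is a sum of visibly positive terms. Your argument is a single direct estimate: after the binomial expansion you group terms as $y^3(y-x^4)+(x^3-4)y^3+(6-x^3)y^2-4y+1$, use integrality ($y-x^4\geq 1$, $x^3\geq 8$) and the crude bound $x^3 y^2<y^3$ (valid because $y>x^4\geq 2x^3$), and reduce to $4y^3-4y+1>0$. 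This avoids both the polynomial factorization in the base case and the induction entirely, at the cost of requiring slightly more care in organizing the expansion. Either approach works; yours is shorter and arguably cleaner, while the paper's is more mechanical and would generalize more easily to the analogous Lemma for $m>2$ mentioned in the paper's remark.
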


\begin{proof}
Step $1$: Pick any $x \in \mathbb{Z}_{+} \setminus \{1\}$ and fix it.\\
We will prove $F(x,y)=(y-1)^4-x^4y^3+(y-1)y^2x^3>0$ when $y=x^4+1$. 
\begin{align*}
F(x,x^4+1) &=x^{16}-x^4(x^4+1)^3+x^4(x^4+1)^2x^3\\
&=x^4(x^3-x-1)(x^8+x^6-2x^5+3x^4-x^3+x^2-x+1).
\end{align*}
$x^4 \geqslant 0$ for every $x \in \mathbb{R}$.\\
$x^3-x-1$ has only one real root in the interval $(1,2)$ and two complex conjugate roots, as the first derivative test and sign chart illustrates it. We have $x^3-x-1>0$ when $x \in \mathbb{Z}_{+} \setminus \{1\}$.\\
Also, $x^8+x^6-2x^5+3x^4-x^3+x^2-x+1>0$ as $x^8+x^6> 2x^5$ and $2x^4>x^3+x$ when $x \in \mathbb{Z}_{+} \setminus \{1\}$.\\
As a result, we proved $F(x,y)>0$ when $y=x^4+1$ and $x \in \mathbb{Z}_{+} \setminus \{1\}$.\\
Step $2$: Pick any $x \in \mathbb{Z}_{+} \setminus \{1\}$. Now, we will prove by induction that $F(x,y)>0$ for any $y>x^4$. We already showed in Step $1$ that $F(x,y)>0$ when $y=x^4+1$. We assume this holds for $y=x^4+C$ for any $C \in \mathbb{Z}_{+}$, and we demonstrate below $F(x,y)>0$ also holds when $y=x^4+C+1.$\\
We have \[F(x,x^4+C)=(x^4+C-1)^4-x^{4}(x^4+C)^3+(x^4+C-1)(x^4+C)^{2}x^{3}>0\] by assumption, and 
\[F(x,x^4+C+1)=(x^4+C+1-1)^4-x^4(x^4+C+1)^3+(x^4+C+1-1)(x^4+C+1)^{2}x^3.\]
Notice that if we show $B=[F(x,x^4+C+1)-F(x,x^4+C)]>0$, then we are done.\\
We have
\[B= 4 C^{3} + 9 C^{2} x^{4} + \underbrace{3 C^2 x^3 - 6 C^2} + 6 C x^8 + \underbrace{6 C x^7 - 15 C x^4} + C x^3 + 4 C + x^{12} + \underbrace{3 x^{11} - 9 x^8} + x^7 + \underbrace{3 x^4 - 1}.\]
Notice that we have 
\[3 C^2 x^3 - 6 C^2=3C^2(x^3-2)>0\] 
\[6 C x^7 - 15 C x^4=3Cx^4(2x^3-5)>0\] 
\[3 x^{11} -9 x^8=3x^8(x^3-3)>0 \text{ \ \ \ and \ \ \ } 3 x^4 - 1>0\] since $x \in \mathbb{Z}_{+} \setminus \{1\}$. These calculations show $B>0$, and the result follows.
\end{proof}

\begin{lem} \label{induction2}
Let $x,y \in \mathbb{Z}_{+} \setminus \{1\}$. If $y>x^3$, then $(y-1)^3-x^{2}y(xy-y+1)>0.$
\end{lem}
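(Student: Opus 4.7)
The plan is to mirror the two-step strategy used for Lemma~\ref{induction1}: first establish the inequality at the boundary value $y = x^3+1$ for each admissible $x$, and then perform an induction on $y$ that keeps $x$ fixed. Write $G(x,y) := (y-1)^3 - x^2 y(xy - y + 1)$; the goal is $G(x,y)>0$ whenever $x \in \mathbb{Z}_+\setminus\{1\}$ and $y>x^3$.

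For the base case, I would substitute $y = x^3+1$ and expand, obtaining
\[
G(x,x^3+1) = x^9 - x^2(x^3+1)(x^4 - x^3 + x) = x^3\bigl(x^5 - 2x^3 + x^2 - 1\bigr).
\]
Since $x\ge 2$, the factor $x^3$ is positive, and for the second factor one just compares monomials: $x^5 \ge 4x^3 > 2x^3$ when $x\ge 2$, so $x^5 - 2x^3 \ge 2x^3 > 1$, hence $x^5 - 2x^3 + x^2 - 1 > 0$. This settles the base case.

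For the inductive step, fix $x \in \mathbb{Z}_+\setminus\{1\}$ and assume $G(x,x^3+C)>0$ for some $C\in\mathbb{Z}_+$. I would compute the discrete derivative
\[
\Delta(x,C) := G(x,x^3+C+1) - G(x,x^3+C),
\]
using $(y)^3-(y-1)^3 = 3y^2-3y+1$ and $(y+1)^2-y^2 = 2y+1$, and simplify to
\[
\Delta(x,C) = x^6 + 2x^5 + 4x^3 C + 2x^2 C + 3C^2 - 4x^3 - 3C + 1.
\]
Then one shows $\Delta(x,C)>0$ for $x\ge 2$, $C\ge 1$ by the same kind of monomial bookkeeping used in Lemma~\ref{induction1}: group $x^6 - 4x^3 = x^3(x^3-4)\ge 0$ for $x\ge 2$, note that $3C^2 - 3C + 1 > 0$ for every integer $C$ (the discriminant is negative), and observe that the remaining terms $2x^5$, $4x^3C$, and $2x^2C$ are nonnegative. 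Since $\Delta(x,C)>0$, the induction step succeeds, and the lemma follows for all $y>x^3$.

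The main obstacle, such as it is, is purely computational: one has to correctly expand $G(x,x^3+1)$ and the difference $\Delta(x,C)$ and then package the resulting polynomial expressions as sums of nonnegative monomials plus the clearly positive quadratic $3C^2 - 3C + 1$. No genuinely new idea beyond Lemma~\ref{induction1} is needed; the inequality is weaker in spirit because the exponent $3$ replaces the exponent $4$, and the boundary and increment both become easier to control.
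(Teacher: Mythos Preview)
Your proposal is correct and follows essentially the same two-step strategy as the paper: verify the base case $y=x^3+1$ by expanding to $x^8-2x^6+x^5-x^3=x^3(x^5-2x^3+x^2-1)>0$, and then show the discrete increment $\Delta(x,C)=x^6+2x^5+4x^3C+2x^2C+3C^2-4x^3-3C+1$ is positive via elementary monomial comparisons. The only difference is cosmetic---the paper groups the increment as $(2Cx^2-3C)+(2x^5-4x^3)+\text{(positive terms)}$ rather than your $(x^6-4x^3)+(3C^2-3C+1)+\text{(positive terms)}$---but both groupings work.
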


\begin{proof}
We again use induction to prove the claim.\\
Step $1$: Pick any $x \in \mathbb{Z}_{+} \setminus \{1\}$ and fix it.\\
We need to show $F(x,y)=(y-1)^3-x^{2}y(xy-y+1)>0$ holds when $y=x^3+1$.\\
We have $F(x,x^3+1)=x^8-2x^6+x^5-x^3$ is clearly bigger than zero as $x \in \mathbb{Z}_{+} \setminus \{1\}$.\\
Step $2$: Pick any $x \in \mathbb{Z}_{+} \setminus \{1\}$. We assume the claim holds for $y=x^3+C$ for any $C \in \mathbb{Z}_{+}$, and we demonstrate below $F(x,y)>0$ holds also when $y=x^3+C+1.$\\
We have \[F(x,x^3+C)=(x^{3}+C-1)^3-x^2(x^3+C) [x(x^3+C)-(x^3+C)+1]>0\] by assumption, and 
\[F(x,x^3+C+1)=(x^{3}+C+1-1)^3-x^{2}(x^3+C+1)[x(x^{3}+C+1)-(x^{3}+C+1)+1].\]
Notice that if we show $B=[F(x,x^3+C+1)-F(x,x^3+C)]>0$, then we are done.\\
We have
\[B= 3 C^{2} + 4 C x^{3} + \underbrace{2 C x^2 - 3 C} + x^6 + \underbrace{2 x^5 - 4 x^3}+1.\]
Notice that both $2 C x^2 - 3 C>0$ and $2 x^5 - 4 x^3>0$ as $x \in \mathbb{Z}_{+} \setminus \{1\}$. So, we have $B>0$ and the result follows.
\end{proof}

\section{Computer Code}
The computer search was carried out on a supercomputer using a computer program called Sage (ours was Sage/6.9). The code used to generate Table~\ref{table1},~\ref{table2},~\ref{table3} and the data in Theorem~\ref{gamma bigger than or equal to 7-thm} is provided below. Inside the code there are some parts written after the pound sign $\#$. These are just comments for better understanding the code, and these comments are ignored by the computer. Also, note that we provided the original, indented code below. If you want to use the same code for some calculation, you need to preserve this indentation.

k=37
\begin{flalign*}
\text{def } \ & f(n):    &  \# & \text{We define a new function } f(n) \text{ in the next four lines.} &\\
&A = [ \ ]        &  \# & A \text{ is an empty list.} &\\
&\text{for } j \text{ in FiniteField}(n, '\mkern-\thinmuskip a'):  & \# & \text{FiniteField}(n, '\mkern-\thinmuskip a') \text{ is the finite field with order } n. &\\
&\qquad A.\text{append}(j**k) & \# & \text{This command adds } j^k \text{ to } A. &\\
&\text{return uniq}(A) & \# & \text{uniq}(A) \text{ removes the same entries of the array } A. &\\
\end{flalign*}
\begin{flalign*}
\text{def } \ &f2(n): &\\
	 		     &A2 = [ \ ] &\\
			     &\text{for } x \text{ in } f(n): &\\
	 		     &\qquad \text{for } y \text{ in } f(n): &\\
			     &\qquad \qquad  A2.\text{append}(x+y) &\\
			     &\text{return uniq}(A2) & \# & f2(n) \text{ gives the list of the elements in } 2R_{k}. &\\
\end{flalign*}
\begin{flalign*}
\cdots 		& \ \  & \# & \text{Similarly, we have the definition of } fX \text{ here for every } X \text{ between } 2 \text{ and }41. &\\
\end{flalign*}
\begin{flalign*}
\text{def } \ &f41(n): &\\
	 		     &A41 = [ \ ] &\\
			     &\text{for } x \text{ in } f40(n): &\\
	 		     &\qquad \text{for } y \text{ in } f(n): &\\
			     &\qquad \qquad A41.\text{append}(x+y) &\\
			     &\text{return uniq}(A41) & \# & f41(n) \text{ gives the list of the elements in } 41R_{k}. &\\
\end{flalign*}
\begin{flalign*}
			   &S = [ \ ] & \# & S \text{ will keep the list of the finite fields with order } n \text{ for } n \leqslant 8k^4. &\\
			   &\text{for } n \text{ in range} (1, 8k^4+1): &\\
	 		   &\qquad \text{if len} (\text{prime}\_\text{divisors}(n)) ==1: & \# & \text{len}(\text{prime}\_\text{divisors}(n)) \text{ gives the number of distinct prime divisors of } n. &\\
			   &\qquad \qquad S.\text{append}(n) &\\
\end{flalign*}
\begin{flalign*}
			   &S1 = [ \ ] & \# & S1 \text{ will keep the list of the finite fields with order } n \text{ such that } |R_k| \neq n. &\\
			   &\text{for } n \text{ in } S: &\\
	 		   &\qquad \text{if len}(f(n))!=n: &\\
			   &\qquad \qquad S1.\text{append}(n)&\\
\end{flalign*}
\begin{flalign*}
			   &S2 = [ \ ] & \# & S2 \text{ will keep the list of the finite fields with order } n \text{ such that } |2R_{k}| \neq n. &\\
			   &\text{for } n \text{ in } S1: &\\
	 		   &\qquad \text{if len}(f2(n))!=n: &\\
			   &\qquad \qquad S2.\text{append}(n)&\\
\end{flalign*}
\begin{flalign*}
\cdots 	   & \ \  & \# & \text{Similarly, we have the definition of } SX \text{ here for every } X \text{ between } 2 \text{ and } 41.&\\\end{flalign*}
\begin{flalign*}
			   &S41 = [ \ ] & \# & S41 \text{ will keep the list of the finite fields with order } n \text{ such that } |41R_{k}| \neq n. &\\
			   &\text{for } n \text{ in } S40: &\\
	 		   &\qquad \text{if len}(f41(n))!=n: &\\
			   &\qquad \qquad S41.\text{append}(n)&\\ 
\end{flalign*}
\begin{flalign*}
& \text{print }  '\mkern-\thinmuskip k=', \text{k} &\\
& \text{print }  '\mkern-\thinmuskip S1',':', \text{S1} &\\
& \text{print }  '\mkern-\thinmuskip S2',':', \text{S2} &\\
& \cdots \qquad \qquad \qquad \# \text{Similarly, we have the same command for } SX \text{ here for every } X \text{ between } 2 \text{ and } 41. &\\
& \text{print } '\mkern-\thinmuskip S41',':', \text{S41} &\\
\end{flalign*}

\section*{Acknowledgement}
I would like to express my sincere gratitude to my advisers, Professor Jonathan Pakianathan and Professor David Covert for suggesting this problem. I also thank University of Rochester and the CIRC team for letting me use their supercomputer. This work was partially supported by NSA grant H98230-15-1-0319.

%\bibliography{trial}

\begin{thebibliography}{10}
%this is Chicago style.
\bibitem{Alperin}
Alperin, Jonathan L.
\newblock {\em Local representation theory: Modular representations as an introduction to the local representation theory of finite groups.} Vol. 11.
\newblock Cambridge University Press, 1993.

\bibitem{RoyBarb}
Barbara, Roy.
\newblock "91.07 Sums of cubes in finite fields."
\newblock {\em The Mathematical Gazette} 91, no. 520 (2007): 85--87.

\bibitem{Bhaskaran}
Bhaskaran, M.
\newblock "Sums of m th powers in algebraic and abelian number fields."
\newblock {\em Archiv der Mathematik} 17, no. 6 (1966): 497--504.

\bibitem{Brouwer-Spectra}
Brouwer, Andries E., and Willem H. Haemers.
\newblock {\em Spectra of graphs}.
\newblock Springer Science \& Business Media, 2011.

\bibitem{Curtis}
Curtis, Charles W., and Irving Reiner.
\newblock {\em Representation theory of finite groups and associative algebras.} Vol. 356.
\newblock American Mathematical Soc., 1966.

\bibitem{Farb-Algebra}
Farb, Benson, and R. Keith Dennis.
\newblock {\em Noncommutative algebra.} Vol. 144.
\newblock Springer Science \& Business Media, 2012.

\bibitem{Godsil}
Godsil, Chris, and Gordon F. Royle.
\newblock {\em Algebraic graph theory.} Vol. 207. 
\newblock Springer Science \& Business Media, 2013.

\bibitem{Hungerford}
Hungerford, Thomas W. 
\newblock {\em Algebra.} Vol. 73.
\newblock Springer Science \& Business Media, 2003.

\bibitem{Rosen}
Ireland, Kenneth, and Michael Rosen.
\newblock {\em A classical introduction to modern number theory.} Vol. 84.
\newblock Springer Science \& Business Media, 2013.

\bibitem{Katre}
Katre, S., and Anuradha Garge.
\newblock "Matrices over commutative rings as sums of k-th powers."
\newblock {\em Proceedings of the American Mathematical Society} 141, no. 1 (2013): 103--113.

\bibitem{Alain}
Robert, Alain M.
\newblock {\em A course in p-adic analysis.} Vol. 198. 
\newblock Springer Science \& Business Media, 2013.

\bibitem{Serre}
Serre, Jean-Pierre.
\newblock {\em Linear representations of finite groups.} Vol. 42.
\newblock Springer Science \& Business Media, 2012.

\bibitem{CS1}
Small, Charles.
\newblock "Solution of Waring's Problem modn."
\newblock {\em The American Mathematical Monthly} 84, no. 5 (1977): 356--359.

\bibitem{CS3}
Small, Charles.
\newblock "Sums of powers in large finite fields."
\newblock {\em Proceedings of the American Mathematical Society} 65, no. 1 (1977): 35--36.

\bibitem{CS4}
Small, Charles.
\newblock "Waring's problem."
\newblock {\em Mathematics Magazine} 50, no. 1 (1977): 12--16.

\bibitem{CS2}
Small, Charles.
\newblock "Waring's Problem modn."
\newblock {\em The American Mathematical Monthly} 84, no. 1 (1977): 12--25.

\bibitem{Torn}
Tornheim, Leonard.
\newblock "Sums of n-th powers in fields of prime characteristic."
\newblock {\em Duke Math. J} 4 (1938): 359--362.

\bibitem{Wooley}
Vaughan, Robert Ch, and Trevor D. Wooley.
\newblock "Waring's problem: a survey."
\newblock {\em Number theory for the millennium} 3 (2002): 301--340.

\bibitem{Weil1949}
Weil, Andr{\'e}.
\newblock "Numbers of solutions of equations in finite fields."
\newblock {\em Bull. Amer. Math. Soc} 55, no. 5 (1949): 497--508.

\bibitem{West}
West, Douglas Brent.
\newblock {\em Introduction to graph theory.} Vol. 2.
\newblock Upper Saddle River: Prentice hall, 2001.

\bibitem{Winterhof1}
Winterhof, Arne.
\newblock "On Waring's problem in finite fields."
\newblock {\em Acta Arithmetica} 87, no. 2 (1998): 171--177.

\end{thebibliography}
%\bibliographystyle{plain}

\end{document}